\theoremstyle{plain}
\newtheorem{thm}{Theorem}[section]
\newtheorem{theorem}[thm]{Theorem}
\newtheorem{lemma}[thm]{Lemma}
\newtheorem{corollary}[thm]{Corollary}
\newtheorem{proposition}[thm]{Proposition}
\theoremstyle{definition}
\newtheorem{remark}[thm]{Remark}
\newtheorem{definition}[thm]{Definition}
\newtheorem{example}[thm]{Example}
\numberwithin{equation}{section}
\newcommand{\sC}{{\mathcal C}}
\newcommand{\sE}{{\mathcal E}}
\newcommand{\sF}{{\mathcal F}}
\newcommand{\sH}{{\mathcal H}}
\newcommand{\sJ}{{\mathcal J}}
\newcommand{\sN}{{\mathcal N}}
\newcommand{\sO}{{\mathcal O}}
\newcommand{\sR}{{\mathcal R}}
\newcommand{\sU}{{\mathcal U}}
\newcommand{\sV}{{\mathcal V}}
\newcommand{\sW}{{\mathcal W}}
\newcommand{\sY}{{\mathcal Y}}
\newcommand{\sZ}{{\mathcal Z}}
\newcommand{\C}{{\mathbb C}}
\newcommand{\J}{{\mathbb J}}
\newcommand{\BP}{{\mathbb P}}
\newcommand{\X}{{\mathbb X}}
\title[Geometry of webs of algebraic curves]{Geometry of webs of  algebraic curves}
\author[Jun-Muk Hwang]{Jun-Muk Hwang} 
\address{Korea Institute for Advanced Study, Hoegiro 87, Seoul 02455, Korea}
\email{jmhwang@kias.re.kr}
\thanks{The author is supported
by National Researcher Program 2010-0020413 of NRF}
\begin{document}

\begin{abstract} A family of algebraic curves covering a projective variety $X$ is called a web of curves on $X$ if it has only finitely many members through a general point of $X$. A web of curves on $X$ induces a web-structure, in the sense of local differential geometry, in a neighborhood of a general point of $X$.  We study how the local differential geometry of the web-structure affects the global algebraic geometry of $X$. Under two geometric assumptions on the web-structure, the pairwise non-integrability condition and the bracket-generating condition,  we prove that the local differential geometry determines the global algebraic geometry of $X$, up to generically finite algebraic correspondences.  The two geometric assumptions are satisfied, for example, when $X \subset \BP^N$ is a Fano submanifold of  Picard number 1,  and the family of lines covering $X$ becomes a web.   In this special case, we have a stronger result that the local differential geometry of the web-structure determines $X$ up to biregular equivalences. As an application, we show that if $X, X' \subset \BP^N, \dim X' \geq 3,$ are two such Fano manifolds of Picard number 1,  then any surjective morphism $f: X \to X'$ is an isomorphism.
\end{abstract}

\maketitle

\noindent {\sc Keywords.} web geometry, extension of holomorphic maps, minimal rational curves, Fano varieties

\noindent {\sc AMS Classification.}  14M22, 32D15, 14J45, 32H04, 53A60

\section{Introduction}
Consider families of algebraic curves covering a projective variety $X$ in such a way that there are only finitely many members of the family through a general point of $X.$  We will call such a family  a `web of curves' (Definition \ref{d.web}) on $X$.
 In a Euclidean neighborhood of a general point of $X$, a web of curves on $X$ induces a `web-structure' (Definition \ref{d.regular}), i.e. a finite collection of (1-dimensional) holomorphic foliations,   a classical object in  differential geometry.
  Although the study of web-structures has a long history in differential geometry (see \cite{PP} and the references therein.), most of the existing theory is about web-structures of codimension 1.
  In this article, we will investigate {\em how the  local differential geometry of  the web-structure induced  by a web of curves affects the   global algebraic geometry} of the projective variety $X$.
Our work suggests that the theory of 1-dimensional web-structures on  manifolds of dimension $\geq 3$ is a worthy subject of study.

\medskip
 The original motivation of this work was to prove the following.

\begin{theorem}\label{t.ultimate}
Let $X, X' \subset \BP^N$  be two projective submanifolds of Picard number 1 covered by lines of $\BP^N$.
Let $\varphi: U \to U'$ be a biholomorphic map between two connected Euclidean open subsets $U \subset X$ and $U' \subset X'$
such that  $\varphi$ (resp. $\varphi^{-1}$) sends germs of lines in $U$ (resp. $U'$) to germs of lines in $U'$ (resp. $U$).
Then there exists a biholomorphic map (i.e. a biregular morphism)  $\Phi: X \to X'$ such that $\varphi = \Phi|_{U}$. \end{theorem}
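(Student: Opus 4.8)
The plan is to extend $\varphi$ by analytic continuation along chains of lines, to repackage the continuation as an algebraic correspondence $Z\subset X\times X'$, and then to use the rigidity coming from $\rho(X)=\rho(X')=1$ to recognize $Z$ as the graph of a biregular morphism. This is a Cartan--Fubini type extension, and it is the geometric content of the general web theorem announced in the introduction, specialized to the line-web where the finiteness of the web forces the stronger, biregular conclusion.

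First I would organize the family of lines. Because $X\subset\BP^N$ has Picard number $1$ and is covered by lines, lines are minimal rational curves on $X$: there are finitely many through a general point (the family is a web), they dominate $X$, and any two general points are joined by a connected chain of lines; likewise for $X'$. Near a general point the lines define a $1$-dimensional web-structure, and the hypothesis on $\varphi$ says precisely that $\varphi$ is a local isomorphism of web-structures. As stated in the introduction, this line-web satisfies both the pairwise non-integrability and the bracket-generating conditions; these are exactly what will make the continuation single-valued and $n$-dimensional, with $n=\dim X$.

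For the continuation: given a general $x\in U$ and a line $\ell\ni x$ in $X$, the Zariski closure of $\varphi(\ell\cap U)$ is a line $\ell'\subset\BP^N$, and since it contains a nonempty Euclidean-open subset of $X'$ it lies in $X'$; the resulting isomorphism $\ell\to\ell'$ extends $\varphi$ holomorphically along $\ell$. At each point of $\ell$ near $U$ the finitely many lines of $X$ through that point are matched bijectively with lines of $X'$ through the image point, by uniqueness of analytic continuation along the leaves of the web (this is where pairwise non-integrability enters). Iterating along chains of lines issuing from $U$, and using that such chains dominate $X$, this produces a continuation of $\varphi$; taking the Zariski closure in $X\times X'$ of the graph $\Gamma$ of $\varphi$, let $Z_0$ be the irreducible component containing $\Gamma$, so that every continued branch sits inside $Z_0$. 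The bracket-generating condition makes the projection $p\colon Z_0\to X$ (and, symmetrically, $q\colon Z_0\to X'$) surjective, pairwise non-integrability keeps $\dim Z_0=n$ so that $p$ and $q$ are generically finite, and $Z_0$ is covered by rational curves carried by $p$ and $q$ isomorphically onto lines of $X$ and of $X'$ respectively (the graphs of the maps $\ell\to\ell'$ and their continuations).

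The crux — and the step I expect to be the main obstacle — is to kill the monodromy, i.e. to show $p$ is birational and then biregular. Over $U$ the inverse of $p$ is $x\mapsto(x,\varphi(x))$, so $p$ is injective on a nonempty Euclidean-open subset of $Z_0$; hence the proper, generically finite morphism $p$ has degree $1$ and is birational. Because $p$ carries the covering family of rational curves of $Z_0$ onto lines (degree $1>0$ on each) it contracts none of them, and a standard argument using $\rho(X)=1$ then shows $p$ contracts no curve at all, so $p$ is an isomorphism. Thus $\Phi:=q\circ p^{-1}\colon X\to X'$ is a morphism with $\Phi|_U=\varphi$. Running the same construction with $\varphi^{-1}\colon U'\to U$ yields a morphism $\Psi\colon X'\to X$ with $\Psi|_{U'}=\varphi^{-1}$, and since $\Psi\circ\Phi$ and $\Phi\circ\Psi$ agree with the identity on a nonempty Euclidean-open set they are the identity; hence $\Phi$ is biregular, as required.
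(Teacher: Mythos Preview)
You have correctly identified the crux: killing the monodromy of $p\colon Z_0\to X$. Unfortunately, the argument you give for it is circular. You write that ``over $U$ the inverse of $p$ is $x\mapsto(x,\varphi(x))$, so $p$ is injective on a nonempty Euclidean-open subset of $Z_0$; hence $\ldots$ $p$ has degree~$1$.'' The graph $\Gamma$ is indeed a Euclidean-open subset of $Z_0$ on which $p$ is a biholomorphism onto $U$, but this says nothing about the \emph{other} sheets of $Z_0$ over $U$. A generically finite morphism can be injective on an open set of the source and still have degree $>1$: the map $z\mapsto z^2$ on $\C$ is injective on a half-plane. The extra points of $p^{-1}(x)$ for $x\in U$ are precisely the values of the monodromy you are trying to exclude; assuming they are absent is assuming the conclusion. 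Example~1.6 in the paper (double covers of a cubic threefold) shows that for webs of rational curves of degree $>1$ this monodromy genuinely occurs, so a purely local or formal argument cannot work here; something special to \emph{lines} must be used.

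In the paper this step is the content of Section~7 and Theorem~\ref{t.line}: one introduces the notion of a \emph{pleated} web, shows (Proposition~\ref{p.pleat2}) that if the correspondence $Z_0\to X$ is not birational then the target web $\sW'$ must be pleated, and then proves by a normal-bundle argument specific to lines in $\BP^N$ that an \'etale web of lines on a Fano manifold of Picard number~$1$ is never pleated. Only after this does one get birationality of both projections and conclude by the anticanonical argument of Proposition~\ref{p.easy}. Your sketch does not supply any substitute for this step.

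A secondary gap: the passage from analytic continuation along chains of lines to an \emph{algebraic} correspondence of the correct dimension is also not justified. Taking the Zariski closure of $\Gamma$ could a priori yield all of $X\times X'$ (as in Examples~\ref{e.I} and~\ref{e.S}); the fact that it does not is exactly Theorem~\ref{t.main}, whose proof occupies Sections~3--5 and relies on the tower construction of Definition~\ref{d.tower} together with the jet argument of Proposition~\ref{p.Psi}. Your invocation of ``pairwise non-integrability keeps $\dim Z_0=n$'' gestures at the right hypothesis but does not explain how it is used.
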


This was proved in \cite{HM01}  under the assumption that the family of lines passing through a general point of $X$ and $X'$ has positive dimension.
 The remaining part of Theorem \ref{t.ultimate}, for which the method of \cite{HM01} fails, is exactly when the families of lines on $X$ and $X'$ form webs of curves.  This remaining part (and  a more  general version) has been raised as an open question  in p. 566 of \cite{HM01}   and appeared as Question 5 in \cite{Hw} in the list of major open problems in the study of minimal rational curves. Our Theorem \ref{t.ultimate} settles this remaining part. More explicitly, we can state the new component of Theorem \ref{t.ultimate} as follows.

\begin{theorem}\label{t.ultim}
Let $X, X' \subset \BP^N,  \dim X = \dim X',$ be two projective manifolds of Picard number 1 through a general point of which there are only finite, but nonzero,   number of lines. Let $\sW$ (resp. $\sW'$) be a web of curves on $X$ (resp. $X'$) whose members are lines in $\BP^N$.
Let $\varphi: U \to U'$ be a biholomorphic map between two connected Euclidean open subsets $U \subset X$ and $U' \subset X'$
such that  $\varphi$ (resp. $\varphi^{-1}$) sends germs of lines belonging to $\sW$ (resp. $\sW'$) to germs of lines belonging to $\sW'$ (resp. $\sW$).
Then there exists a biholomorphic map (i.e. a biregular morphism)  $\Phi: X \to X'$ such that $\varphi = \Phi|_{U}$. \end{theorem}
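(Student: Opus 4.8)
The plan is to extend $\varphi$ by continuing it along the members of $\sW$, using the induced web-structure to control the continuation, and then to realize the extension as an algebraic correspondence which, in the case of lines, is forced to be the graph of a biregular morphism. I will take for granted two facts: that a web of lines on a projective manifold of Picard number $1$ satisfies the pairwise non-integrability and bracket-generating conditions, as announced in the abstract; and the general extension theorem for web-structures satisfying these two conditions, which turns a germ of web-isomorphism into a generically finite algebraic correspondence preserving the webs.

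\textbf{Step 1: local extension around a member of $\sW$.} Fix a general point $x_0 \in U$ and a member $\ell \in \sW$ through $x_0$. Near a general point of $\ell$ the web $\sW$ is a finite collection of one-dimensional holomorphic foliations, one of them tangent to $\ell$, and $\varphi$ identifies this collection with the corresponding one for $\sW'$ near the matching point of $\ell'$, the member of $\sW'$ containing the germ $\varphi(\ell \cap U)$. By the bracket-generating condition the iterated leaves of these foliations fill out a neighborhood of $\ell$ in $X$, and by pairwise non-integrability the continuation of a web-isomorphism along such leaves is rigid; together with the positivity of the normal bundles of the rational curves in $\sW$, a consequence of the Fano, Picard-number-one hypothesis, this lets one continue $\varphi$ to a biholomorphism from a connected neighborhood of $\ell$ in $X$ onto a neighborhood of $\ell'$ in $X'$, still carrying members of $\sW$ to members of $\sW'$. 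This is essentially the content of the general extension theorem applied to the present situation.

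\textbf{Steps 2 and 3: globalization and biregularity.} Let $Z \subset X \times X'$ be the Zariski closure of the graph of $\varphi|_U$, an irreducible variety. Since $X$ has Picard number $1$ and is covered by $\sW$, a general point of $X$ is linked to $x_0$ by a connected chain of members of $\sW$, so iterating Step 1 analytically continues $\varphi$ over a Zariski-dense open subset $X^\circ$ of $X$, each continuation agreeing with $\varphi$ on an open neighborhood of $x_0$ and hence having graph contained in $Z$; thus $Z$ respects the webs in the sense that a general member of $\sW$ is carried by the correspondence into a member of $\sW'$, and running the same construction with $\varphi^{-1}$ shows both projections $Z \to X$ and $Z \to X'$ are birational, each being an isomorphism over $U$, resp. $U'$. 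Hence $Z$ is the graph of a birational map $g$ from $X$ to $X'$ with $g|_U = \varphi$; let $B \subset X$ be its indeterminacy locus, of codimension at least two. If $B$ were nonempty, take a general point $b$ of a component of $B$: if some member $\ell$ of $\sW$ through $b$ is not contained in $B$, then $g$ is defined on $\ell$ minus finitely many points, maps it by web-compatibility into a member of $\sW'$ isomorphic to $\BP^1$, hence extends over $b$ since $\ell$ is a smooth curve, and, this holding for the generic member through $b$, $g$ is defined at $b$ — a contradiction. So every member of $\sW$ through a general point of $B$ lies in $B$, the foliations of $\sW$ are tangent to $B$ there, and since Lie brackets of vector fields tangent to $B$ are again tangent to $B$, the bracket-generating condition forces $B$ to have full dimension, which is absurd. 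Thus $g$ is a morphism, by symmetry so is $g^{-1}$, and $\Phi := g$ is the desired biregular morphism with $\Phi|_U = \varphi$.

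\textbf{The main obstacle.} The decisive step is Step 1, the extension of $\varphi$ across an entire neighborhood of a member of $\sW$ while preserving the web: guaranteeing that the continuation is single-valued and holomorphic in all directions, not merely along one-dimensional leaves, is exactly what the pairwise non-integrability and bracket-generating hypotheses are designed to ensure, and it carries the bulk of the technical work. A second, smaller obstacle is the passage from a generically finite correspondence to a biregular map: this is where one genuinely uses that the members of $\sW$ are \emph{lines} — rational curves of degree one in $\BP^N$, hence minimal rational curves — which prevents the correspondence from being multivalued along members and so upgrades the correspondence furnished by the general theorem to an honest birational, and then biregular, equivalence.
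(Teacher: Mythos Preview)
Your proposal has a genuine gap at the passage from the generically finite correspondence to a birational one. You assert that both projections $Z \to X$ and $Z \to X'$ are birational, ``each being an isomorphism over $U$, resp.\ $U'$,'' but this is exactly what does \emph{not} follow. The Zariski closure of the graph of $\varphi$ is always an isomorphism over $U$ --- that is tautological --- yet may have several sheets over a general point of $X$, because analytic continuation of $\varphi$ along different chains of members of $\sW$ can return with different values. Your phrase ``each continuation agreeing with $\varphi$ on an open neighborhood of $x_0$'' begs the question: a continuation along a loop based at $x_0$ need not come back to $\varphi$. Example~\ref{e.cubic} in the paper (double covers of a cubic threefold) shows that for webs of rational curves of degree $>1$ the correspondence is genuinely multi-sheeted, so something specific to \emph{lines} is required, and you have not supplied it beyond the remark in your final paragraph.

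The paper handles this by an entirely different mechanism: it introduces the notion of a \emph{pleated} web (Definition~\ref{d.pleat}) and shows (Proposition~\ref{p.pleat2}) that if the correspondence $\Gamma$ is not birational over $X$, then $\sW'$ must be pleated; it then proves (Theorem~\ref{t.line}) that an \'etale web of lines on a Fano manifold of Picard number~1 is never pleated, using that the normal bundle of a line in $\BP^N$ admits no nonzero section vanishing at two distinct points. This is the substantive ``Step~2'' content, and nothing in your Steps~2--3 substitutes for it. Your biregularity argument also has a smaller gap: extending $g$ along each line $\ell$ through $b\in B$ gives a value $g|_\ell(b)$, but different lines may give different values (this is what indeterminacy means), so you cannot conclude $g$ is defined at $b$. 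The paper instead argues (Proposition~\ref{p.easy}) that no hypersurface is contracted, obtains a biregular isomorphism off codimension~$\ge 2$, and uses Hartogs on pluri-anticanonical sections.
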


The condition that $\varphi$ (resp. $\varphi^{-1}$) sends germs of lines in $U$ (resp. $U'$) to germs of lines in $U'$ (resp. $U$) means that $\varphi$ is an equivalence of the web-structures in the sense of local differential geometry. Thus Theorem \ref{t.ultim} precisely says that, under the given assumptions,  the local equivalence of  the web-structures implies the biregular equivalence of the projective varieties. If we choose $\sW$ and $\sW'$ as the webs of all lines covering the projective manifolds, then Theorem \ref{t.ultim} gives the remaining part of Theorem \ref{t.ultimate}.

It is crucial that the open subsets $U$ and $U'$ in Theorem \ref{t.ultim} are in Euclidean topology.  As a matter of fact, if we replace
Euclidean open subsets by Zariski open subsets in Theorem \ref{t.ultim}, the proof becomes straight-forward (see Proposition \ref{p.easy}).
Thus the key issue in the proof of Theorem \ref{t.ultim} is to extend a holomorphic map defined on a Euclidean open subset to a Zariski open subset.
We will achieve this in two steps:

\smallskip
\textsf{Step 1}.  Extension from a Euclidean open subset to an \'etale open subset.

\textsf{Step 2}.  Extension from an \'etale open subset to  a Zariski open subset.

\smallskip
It turns out that  our argument for \textsf{Step 1} works in a much more general setting than Theorem \ref{t.ultim} and proves the following.

\begin{theorem}\label{t.main}
Let $X$ (resp. $X'$) be  a projective variety with a web $\sW$ (resp. $\sW'$)  of curves. Assume that both $\sW$ and $\sW'$ are
\begin{itemize} \item[(P)] pairwise non-integrable (Definition \ref{d.P}) and \item[(B)] bracket-generating (Definition \ref{d.B}). \end{itemize}
Let $\varphi:U \to U'$ be a biholomorphic map between two connected Euclidean open subsets $U \subset X$ and $U' \subset X'$
such that  $\varphi$ (resp. $\varphi^{-1}$) sends germs of members of $\sW$ in $U$ (resp. $\sW'$ in $U'$) to germs of  members of $\sW'$ in $U'$ (resp. $\sW$  in $U$).
  Then $\varphi$ can be extended to a generically finite algebraic correspondence between $X$ and $X'$, i.e.,  there exists a projective subvariety $\Gamma \subset X \times X'$ which contains the graph of $\varphi$ and is generically finite over both $X$ and $X'$.  \end{theorem}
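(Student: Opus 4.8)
The plan is to analytically continue $\varphi$ along curves belonging to the web and then take the Zariski closure of the resulting multivalued graph. Fix general points $x_0 \in U$ and $x_0' = \varphi(x_0) \in U'$. A member $C$ of $\sW$ through $x_0$ is sent by $\varphi$ to a germ of a member $C'$ of $\sW'$ through $x_0'$; since members of a web are algebraic curves, this germ extends to an honest map between (the normalizations of) $C$ and $C'$, giving an analytic continuation of $\varphi$ along $C$. Iterating: from a general point $x_1$ on $C$ we may pick another member $C_1$ of $\sW$ through $x_1$, continue along it, and so on. The bracket-generating condition (B) is precisely what guarantees that the union of the chains of web-members emanating from $x_0$ is Zariski dense in $X$ — the infinitesimal directions obtained by iterated brackets of the foliation tangent fields span the whole tangent space, so the reachable set has nonempty interior and hence is dense for an irreducible $X$. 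This produces a (possibly hugely multivalued) correspondence extending $\varphi$.

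The key point is to bound the number of branches. Let $\Gamma \subset X \times X'$ be the Zariski closure of the graph of $\varphi$ (equivalently, the Zariski closure of the graph of the multivalued analytic continuation just described). Since $\varphi$ is a biholomorphism onto its image, $\Gamma$ dominates both $X$ and $X'$, and $\dim \Gamma = \dim X = \dim X'$, so $\Gamma$ is automatically generically finite over each factor — PROVIDED we know $\dim \Gamma = \dim X$, i.e. that the analytic continuation does not acquire positive-dimensional fibers. This is where the pairwise non-integrability (P) enters: if two distinct branches of the continuation agreed along a positive-dimensional set, one could produce, near a general point, two distinct local biholomorphisms of web-structures agreeing on a hypersurface but differing elsewhere, and by following web-members transverse to that hypersurface and using that no two of the foliations are jointly integrable, one derives a contradiction with the uniqueness of analytic continuation along curves. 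Concretely, I would argue that the monodromy of the continuation is finite: the branches over a general point $x \in X$ are controlled by the finitely many web-members of $\sW'$ through the corresponding points of $X'$, and condition (P) forces the local continuations along different web-members to be compatible in a way that prevents infinite branching.

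So the steps, in order, are: (1) show each germ of $\varphi$ continues analytically along any web-member, using algebraicity of web curves; (2) use (B) to show the chains of web-members from a fixed point reach a Zariski-dense subset of $X$, so the Zariski closure $\Gamma$ of the graph dominates $X$; (3) symmetrically, $\Gamma$ dominates $X'$; (4) use (P) to show the analytic continuation stays generically finite, i.e. $\dim \Gamma = \dim X$, hence $\Gamma$ is a generically finite correspondence containing the graph of $\varphi$. The main obstacle I expect is step (4): translating the differential-geometric non-integrability hypothesis into an honest finiteness (or at least dimension) statement about the monodromy of the multivalued extension. One natural route is to first prove a local rigidity statement — that a germ of web-structure equivalence is determined by a finite jet, or by its restriction to the web-members through one point — and then feed that into the continuation argument; the non-integrability is what makes such a finite-determinacy statement plausible, since integrable directions would allow free deformation of the map along the leaves.
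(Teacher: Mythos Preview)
Your step (1) does not work, and this is where the whole scheme breaks down. Knowing that $\varphi$ sends the germ of a web-member $C$ at $x_0$ to the germ of a web-member $C'$ at $x_0'$ gives you, at best, a germ of a map $C \to C'$; germs of holomorphic maps between algebraic curves do \emph{not} automatically extend to global morphisms (think of $z \mapsto e^z$ between two copies of $\BP^1$). More importantly, even if you had a global map $C \to C'$, that would only tell you where points of $C$ go; it would not produce an extension of $\varphi$ as a map between open sets of $X$ and $X'$ in a neighborhood of a new point $x_1 \in C$. So when you write ``from a general point $x_1$ on $C$ we may pick another member $C_1$ of $\sW$ through $x_1$, continue along it,'' there is nothing to continue: you have no germ of $\varphi$ near $x_1$ to tell you which member of $\sW'$ the curve $C_1$ should go to. The naive analytic-continuation-along-chains picture simply does not get off the ground for maps between higher-dimensional varieties.

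The paper's mechanism is quite different and explains the true role of (P). One passes from $X$ to the universal family $A = {\rm Univ}_{\sV}$ over an irreducible component $\sV$ of $\sW$, with $g: A \to \sV$ the projection. Condition (P) is equivalent (Corollary~\ref{c.nonintegrable}) to the statement that ${\rm Fin}(g) \neq \emptyset$: there are components of the pulled-back web on $A$ whose $g$-images give a covering family of curves on $\sV$ with \emph{positive-dimensional} families of members through a general point. On this parameter space one is no longer in the ``web'' situation, and an extension argument using finite-order jets (Proposition~\ref{p.Psi}) applies, in the spirit of \cite{HM01}. The result (Proposition~\ref{p.mainext}) is that a descending map defined only near a section of a fiber bundle extends meromorphically to the whole bundle, at the price of composing with a generically finite algebraic correspondence. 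Iterating this in a tower (Definition~\ref{d.tower}), and using (B) via Proposition~\ref{p.tower} to guarantee that after $n = \dim X$ steps the fibers of the tower surject onto $X$, one obtains the desired $\Gamma$. So (P) is not a finiteness or rigidity device for the continuation; it is what allows any extension step to take place at all, by manufacturing positive-dimensional families of curves on the parameter space where none exist on $X$ itself.
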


This says that  we can extend the complex analytic (or differential geometric) equivalence of the web-structures on Euclidean open subsets to an algebraic equivalence on \'etale open subsets, provided  the webs satisfy  two conditions (P) and (B).  Both  conditions are formulated as the failure of the  involutiveness of certain distributions associated with the web-structures induced by the webs of curves. So these conditions are local differential geometric properties of the webs.  But both of them can be  interpreted also as algebro-geometric conditions on the webs of curves (Corollary \ref{c.nonintegrable} and Proposition \ref{p.sat}, respectively).

The following examples show that both conditions (P) and (B) are necessary for Theorem \ref{t.main}. Fix  two domains $O, O' \subset \C$
such that the restriction $f: O \to O'$ of the exponential map $e^z: \C \to \C$ is a biholomorphism.

\begin{example}\label{e.I}
Set $X = X' = \BP^1 \times \BP^1$ and let  $\sW$ be the web consisting of the two irreducible  families of curves
given by each factor of $\BP^1$. This web satisfies (B), but not (P).  Set $U = O \times O$ and $U' = O' \times O'$. Then  the product $(f, f): U \to U'$ is a biholomorphic map
preserving the web-structures, but cannot be extended to a generically finite correspondence.
\end{example}

\begin{example}\label{e.S}
Use the terminology of Theorem \ref{t.main}.
Consider projective varieties  $Y:= \BP^1 \times X$ and $Y':=\BP^1 \times X'$. They are equipped with webs $\sV$ and $\sV'$ induced by $\sW$ and $\sW'$.  Then $\sV$ and $\sV'$ satisfy (P), but not (B). Put $V = O \times U$ and $V'= O' \times U'$. The biholomorphic map $(f, \varphi): V \to V'$  preserves the web-structures, but cannot be extended to a generically finite correspondence between $Y$ and $Y'$.
\end{example}

The condition (B), in a different form,  had  appeared also  in \cite{HM01}  and was used  crucially in the extension argument there. Its role in the current work is very similar to that in \cite{HM01}, based on the construction (see Proposition \ref{p.tower}) of a tower of auxiliary varieties by attaching members of the family of curves in an inductive way.
 A novel part of our argument in \textsf{Step 1}  is to use the condition (P) to overcome the difficulty in applying the method of \cite{HM01} in the current setting. Roughly speaking, the condition (P) provides  the parameter space $\sW$ with a family of curves (see Corollary \ref{c.nonintegrable}) whose members through a general point of $\sW$ form a positive-dimensional family.  This situation is very similar to the main setting of \cite{HM01}, except that these curves are not necessarily determined by their tangent directions, unlike the minimal rational curves considered in \cite{HM01}.  But this technical difference can be handled by using higher jets of curves (see Proposition \ref{p.Psi}) in place of their tangent directions and  we can carry out the extension procedure
in a way analogous to that of \cite{HM01}.

An important class of webs satisfying both (P) and (B)  is  \'etale webs of smooth rational curves (Definition \ref{d.etale}) on Fano manifolds of Picard number 1.  In particular, Theorem \ref{t.main} implies the following general version of Theorem \ref{t.ultim}.

\begin{theorem}\label{t.rational}
Fix two positive integers $\ell, \ell' >0$.
Let $X \subset \BP^N$ (resp. $X' \subset \BP^N$) be a projective manifold of Picard number 1 through a general point of which there are only finite, but nonzero,   number of smooth rational curves of degree $\ell$ (resp. $\ell'$).
Let $\sW$ (resp. $\sW'$) be a  web of  curves on $X$ (resp. $X'$) general members of which are smooth rational curves of degree $\ell$  (resp. $\ell'$).
Let $\varphi: U \to U'$ be a biholomorphic map between two connected Euclidean open subsets $U \subset X$ and $U' \subset X'$
such that  $\varphi$ (resp. $\varphi^{-1}$) sends germs of rational curves belonging to $\sW$ (resp. $\sW'$) to germs of rational curves belonging to $\sW'$ (resp. $\sW$).
 Then $\varphi$ can be extended to a generically finite algebraic correspondence between $X$ and $X'$, i.e.,  there exists an irreducible projective subvariety $\Gamma \subset X \times X'$ which contains the graph of $\varphi$ and is generically finite over both $X$ and $X'$.  \end{theorem}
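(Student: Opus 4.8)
The plan is to derive Theorem \ref{t.rational} from Theorem \ref{t.main}, so the entire task is to verify that the two webs satisfy the hypotheses (P) and (B). As only the web-structure near a general point is relevant, I would first pass to the open subset of $\sW$ parametrizing smooth rational curves of degree $\ell$ (and likewise for $\sW'$), thereby assuming that $\sW$ and $\sW'$ are \'etale webs of smooth rational curves in the sense of Definition \ref{d.etale}; this affects neither the web-structures near general points nor the hypotheses on $\varphi$, since the $\varphi$-image of a germ of a $\sW$-curve based at a general point is a germ of a general --- hence smooth, of degree $\ell'$ --- member of $\sW'$. Thus it suffices to prove that every \'etale web of smooth rational curves on a projective manifold $X$ of Picard number $1$ is pairwise non-integrable and bracket-generating. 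Granting this, Theorem \ref{t.main} supplies a projective subvariety $\Gamma_0 \subset X \times X'$, generically finite over both factors, containing the graph of $\varphi$; and letting $\Gamma$ be the irreducible component of $\Gamma_0$ containing that graph --- which is connected, $U$ being connected --- completes the proof, as $\Gamma$ dominates both factors and lies inside $\Gamma_0$, hence remains generically finite over each.

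It remains to check (P) and (B). By their algebro-geometric reformulations (Proposition \ref{p.sat} for (B), Corollary \ref{c.nonintegrable} for (P)), the failure of bracket-generating would produce a proper subvariety $Y$ of $X$, passing through a general point of $X$, such that every member of $\sW$ through a general point of $Y$ is contained in $Y$; and the failure of pairwise non-integrability would, for some pair of local branches of the web, produce such a $Y$ of dimension two --- in particular $\dim X \geq 3$, which holds anyway, since a smooth projective surface of Picard number $1$ covered by rational curves is $\BP^2$, through a general point of which pass infinitely many rational curves of every degree, contrary to the finiteness hypothesis. In either case $Y$ is covered by members of a sub-family $\sF \subseteq \sW$ dominating $X$ --- the full web in the first case, the union of the two branches in the second --- and every chain of members of $\sF$ starting at a general point of $Y$ stays inside $Y$, so that $\dim Y < \dim X$ bounds the $\sF$-chain-connectivity classes. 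This contradicts the following standard consequence of Picard number $1$: any family of rational curves dominating $X$ chain-connects the general pair of points of $X$. Indeed, were this false, $X$ would admit a dominant rational map $\pi$ onto a positive-dimensional variety $B$ (its chain-connectivity quotient; Campana, Koll\'ar--Miyaoka--Mori), and for a very ample divisor $A$ on $B$ the divisor $\Delta := \overline{\pi^{-1}(A)}$ on $X$ would be nonzero and effective; a general member $C$ of the family lies in a general fibre of $\pi$, so $\Delta \cdot C = 0$, whereas $N^1(X) = \R[H]$ forces $[\Delta] = c[H]$ with $c > 0$, giving $\Delta \cdot C > 0$. Hence $\sW$ (and likewise $\sW'$) is both pairwise non-integrable and bracket-generating, and Theorem \ref{t.main} applies.

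Granting Theorem \ref{t.main}, essentially everything above is formal, and the Picard-number-$1$ input is elementary. The one point I would treat with care is the use of the algebro-geometric reformulations of (P) and (B) in Proposition \ref{p.sat} and Corollary \ref{c.nonintegrable}: these convert the a priori purely local statements about non-involutiveness of the distributions spanned by the web directions near a general point into the global statements about $\sW$-invariant subvarieties used above, and it is this passage from the differential-geometric to the algebraic picture --- not the contradiction with Picard number $1$ itself --- that is the substantive part of this step; one must in particular know that an integral curve of the relevant involutive distribution is contained in a single algebraic leaf and that the sub-families cut out by fixing a local branch still dominate $X$.
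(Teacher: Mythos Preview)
Your overall plan --- reduce to Theorem \ref{t.main} by verifying (P) and (B) for the webs $\sW$ and $\sW'$, after observing via Proposition \ref{p.example} that they are \'etale webs of smooth rational curves --- matches the paper's exactly, and your argument for (B) via chain-connectivity of a dominating family of rational curves on a Picard-number-$1$ manifold is essentially Proposition \ref{p.chain} (the paper intersects directly with an ample hypersurface rather than invoking the rational quotient, but the content is the same).

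The gap is in your argument for (P). You assert that failure of (P) produces a surface $Y$ through a general point and a sub-family $F \subset \sW$, ``the union of the two branches'' $W_i, W_j$, such that every $F$-chain from $Y$ stays in $Y$. But the local branches $W_i, W_j$ are sheets of $\mu_{\sW}$ over a small open set and do not determine a global sub-family of $\sW$: monodromy of $\mu_{\sW}$ may permute them. The smallest global object containing them is the irreducible component (or pair of components) $\sV \subset \sW$ in which they lie, and no such component is univalent (Lemma \ref{l.trivial} applied to $\sV$ itself). Hence through a general point $y$ of your leaf $Y$ there are several $\sV$-members, only one of which --- the $W_i$-curve --- is forced into $Y$; the leaf of $W_i + W_{i'}$ for another sheet $W_{i'}$ of $\sV$ is a \emph{different} surface through $y$. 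So $Y$ is not $\sV$-saturated, $\sV$-chains need not stay in $Y$, and the chain-connectivity contradiction collapses. Your final paragraph flags the passage from local to global as the delicate point, but the specific obstruction is this monodromy of the sheets, which you do not address.

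The paper's verification of (P) (Proposition \ref{p.NBir}) proceeds differently and genuinely uses the \'etale-web structure. By Corollary \ref{c.nonintegrable}, (P) amounts to ${\rm Fin}(g) \neq \emptyset$ for every component $\sV$, with $f: A = {\rm Univ}_{\sV} \to X$ and $g: A \to \sV$. First one shows ${\rm Mult}(f^*\sW) \subset {\rm Fin}(g)$ (Proposition \ref{p.infty}): if a component $\sJ$ of $f^*\sW$ lay in ${\rm Inf}(g) \cap {\rm Mult}(f^*\sW)$, its members would foliate surfaces $g^{-1}(\mbox{curve})$ on which the induced web is an irreducible \'etale web, hence univalent by the self-intersection count of Proposition \ref{p.surface}, contradicting $\sJ \subset {\rm Mult}(f^*\sW)$. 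Then one shows ${\rm Mult}(f^*\sW) \neq \emptyset$ (Proposition \ref{l.nbirational}): $X$ is Fano by Proposition \ref{p.example}, hence simply connected, so the non-birational $f$ has a nonempty branch divisor, which is ample; any component of $f^*\sW$ whose general member meets the ramification divisor then lies in ${\rm Mult}(f^*\sW)$. Combining, ${\rm Fin}(g) \supset {\rm Mult}(f^*\sW) \neq \emptyset$.
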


There are many Fano manifolds of Picard number 1 having such webs. In fact,
   all Fano threefolds of Picard number 1, excepting   the 3-dimensional projective space and the 3-dimensional hyperquadric,  have \'etale webs of smooth rational curves (see Chapter 4 of \cite{IP}).

\medskip
 While \textsf{Step 1} of the proof of Theorem \ref{t.ultim} works in the general setting of Theorem \ref{t.main}, the argument in \textsf{Step 2} for  the extension from an \'etale open subset to a Zariski open subset is more subtle and  does not work even in the setting of Theorem \ref{t.rational}. To allow the argument in \textsf{Step 2}, the web should not be `pleated' (see Definition \ref{d.pleat}), which is
a global algebro-geometric condition. We verify this condition for Theorem \ref{t.ultim} by exploiting a deformation-theoretic property of lines,  which does not hold for rational curves of higher degree. In fact, the following example shows that we cannot expect  $\Gamma$ in Theorem \ref{t.rational} with $\ell, \ell' >1$ to be the graph of a biregular morphism or even a rational map.

\begin{example}\label{e.cubic}
Let $X_0 \subset \BP^4$ be a smooth cubic threefold. There are exactly six lines through a general point of $X_0$.
Choose two general quadric hypersurfaces $Q, Q' \subset \BP^4$ and let $f: X \to X_0$ (resp. $f': X' \to X_0$) be the double cover of $X_0$ branched along $Q\cap X_0$ (resp. $Q' \cap X_0).$  Then $X, X'$ are Fano threefolds of Picard number 1 and the inverse images of
lines on $X_0$ give rise to an \'etale web $\sW$ (resp. $\sW'$) of smooth rational curves on $X$ (resp. $X'$).  We can choose connected
Euclidean open subsets $U_0 \subset X_0, U \subset X$ and $U' \subset X'$ such that $U_0= f(U) = f'(U')$ and the restrictions
$$U \stackrel{f|_{U}}{\longrightarrow} U_0 \stackrel{f'|_{U'}}{\longleftarrow} U'$$ are biholomorphic.
Then the composition $$\varphi := (f'|_{U'})^{-1} \circ f|_{U}: U \to U'$$ is a biholomorphic map sending germs of
members of  $\sW$ to those of $\sW'$. For general choices of $Q$ and $Q'$, the two varieties $X$ and $X'$ cannot be biregular. The two morphisms $f$ and $f'$ give rise to a generically finite correspondence, predicted by Theorem \ref{t.rational}, between $X$ and $X'$. \end{example}

The argument of \textsf{Step 2} has the following application.

\begin{theorem}\label{t.application}
Fix a positive integer $\ell' >0$.
Let $X \subset \BP^N$ (resp. $X' \subset \BP^N$) be a projective manifold of Picard number 1 through a general point of which there are only finite, but nonzero,   number of lines (resp. smooth rational curves of degree  $\ell'$).
 Assume that $\dim X = \dim X' \geq 3.$  Then any surjective morphism $f: X \to X'$ is an isomorphism.
\end{theorem}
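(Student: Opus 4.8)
The plan is to show first that a surjective such $f$ is automatically finite, say of degree $d$, and then to force $d=1$ by producing, from the generic fibre of $f$, enough biregular automorphisms of $X$ via Theorem~\ref{t.ultim}.

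\emph{Step one: $f$ is finite.} Since $\dim X=\dim X'$ and $f$ is surjective, $f$ is generically finite, and $f^{*}\sO_{X'}(1)$ is a nonzero nef divisor class on $X$, hence, because $\rho(X)=1$, a positive rational multiple of the ample generator; in particular it is ample, so $f$ contracts no curve and is finite. Write $d=\deg f$. Being uniruled with $\rho=1$, both $X$ and $X'$ are Fano, hence simply connected, so it is enough to prove $d=1$. Over a general point $y\in X'$ the fibre $f^{-1}(y)=\{x_{1},\dots ,x_{d}\}$ consists of $d$ distinct points at which $f$ is a local biholomorphism; pick connected Euclidean neighbourhoods $U_{i}\ni x_{i}$ and $U'\ni y$ with $f|_{U_{i}}\colon U_{i}\xrightarrow{\sim}U'$ and set
$$\psi_{i}:=(f|_{U_{1}})^{-1}\circ (f|_{U_{i}})\colon U_{i}\longrightarrow U_{1},$$
a biholomorphism of connected Euclidean open subsets of $X$ with $\psi_{i}(x_{i})=x_{1}$ and $f\circ\psi_{i}=f|_{U_{i}}$.

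\emph{Step two: each $\psi_{i}$ is a web-equivalence for the web $\sW$ of lines on $X$.} The map $\psi_{i}$ carries the germ at $x_{i}$ of a line $\ell$ through $x_{i}$ to $(f|_{U_{1}})^{-1}(f(\ell))$, which is the branch through $x_{1}$ of the curve $f^{-1}(f(L))$, where $L\supset\ell$ is the corresponding member of $\sW$; so the claim is that, for $L$ general, every local branch of $f^{-1}(f(L))$ is again a member of $\sW$. This is a non-pleatedness statement for the web of lines, of the same nature as the one used in \textsf{Step 2}, and proving it is where the special deformation theory of lines (global generation of their normal bundles, a general line being determined by its tangent direction, the family of lines sweeping out $X$ without folding) has to be invoked. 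Granting it, Theorem~\ref{t.ultim} applied with $X'=X$ and $\sW'=\sW$ extends each $\psi_{i}$ to a biregular automorphism $\Psi_{i}$ of $X$ (here the standing hypothesis $\dim\ge 3$ enters), and $f\circ\psi_{i}=f|_{U_{i}}$ gives $f\circ\Psi_{i}=f$ on all of $X$. Hence $G:=\{\Psi\in\mathrm{Aut}(X):f\circ\Psi=f\}$ contains elements sending $x_{i}$ to $x_{1}$ for every $i$; since a deck transformation fixing one general point of $X$ agrees with $f$ locally there and hence is the identity, $G$ acts freely, and now transitively, on a general fibre, so $|G|=d$ and $f$ is a Galois covering with group $G$, and $G$ preserves $\sW$.

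\emph{Step three: $d\ge 2$ is impossible.} If $G$ acts freely on $X$ then $f$ is étale, contradicting the simple connectedness of $X$. Otherwise some $\Psi\in G\setminus\{\mathrm{id}\}$ has a nonempty, necessarily divisorial (by purity of the branch locus) fixed locus, so the ramification divisor $R$ of $f$ is nonzero; along a component of $\mathrm{Fix}(\Psi)$ the kernel of $df$ is a line field, and studying how $G$ permutes the finitely many lines of $\sW$ through a general point of that component one finds that such a ramified Galois quotient is incompatible with $X$ carrying only finitely many lines through a general point (in effect it would force $X=\BP^{n}$ or a hyperquadric, both excluded). Thus $d=1$ and $f$ is an isomorphism. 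I expect the web-compatibility in Step two --- the non-pleatedness of the web of lines supplied by their deformation theory --- to be the main obstacle; once it and the ensuing Galois structure are available, Step three is comparatively soft.
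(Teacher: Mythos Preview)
Your route is quite different from the paper's and, as written, has a real gap.

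\textbf{Step 2.} The claim you need is not a ``non-pleatedness'' statement; you are conflating two distinct ingredients. What you actually need is that the $\psi_i$ preserve \emph{some} web of lines on $X$, and this is immediate once you observe that $\Phi^*\sW'$ is an \'etale web of smooth curves on $X$ (Lemma~\ref{l.coveretale}) and hence, by the second part of Proposition~\ref{p.example}, a subweb of the web of lines. Since $f\circ\psi_i=f$ tautologically forces $\psi_i$ to preserve $\Phi^*\sW'$, you may feed $\Phi^*\sW'$ into Theorem~\ref{t.ultim} on both sides and obtain the $\Psi_i$. Your stronger claim, that for a \emph{general} line $L$ every branch of $f^{-1}(f(L))$ is again a line, is neither needed nor obviously true: $f(L)$ need not lie in $\sW'$.

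\textbf{Step 3.} This is where the argument breaks down. After obtaining the Galois structure $X'=X/G$, you assert that a ramified quotient ``would force $X=\BP^n$ or a hyperquadric,'' but no argument is given, and none is apparent. At a general point of a ramification component the web-structure need not be finite, so your proposed local analysis does not get off the ground. Example~\ref{e.cubic} shows precisely that for webs of higher-degree rational curves such ramified Galois covers \emph{do} exist, so whatever argument you supply must use the special deformation theory of lines --- and that is exactly Theorem~\ref{t.line} (non-pleatedness), which via Proposition~\ref{p.pleat2} already shows directly that $\mathrm{Graph}(\Phi)\to X'$ is birational. In other words, the missing ingredient in your Step~3 is the very statement the paper applies at the outset, and once you have it the detour through $\mathrm{Aut}(X)$ is superfluous.

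\textbf{The paper's proof}, by contrast, is a three-line application of the machinery already built: $\Phi^*\sW'$ is an \'etale web of lines on $X$ (Lemma~\ref{l.coveretale} and Proposition~\ref{p.example}); then Proposition~\ref{p.ell} (i.e.\ Proposition~\ref{p.pleat2} plus Theorem~\ref{t.line}), applied with the roles of $X$ and $X'$ exchanged, forces $\mathrm{Graph}(\Phi)\to X'$ to be birational, so $\Phi$ is birational; finally Proposition~\ref{p.easy} upgrades this to biregular. No automorphisms, no Galois theory.
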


Some special cases of Theorem \ref{t.application} have been  known.  \cite{Sc} proved it  when $\dim X = \dim X'= 3$  under the additional assumption of the smoothness of  the Hilbert scheme of lines.  \cite{HM03} proved it  when $X=X'$, i.e., when $f$ is a self-map.  These special cases have been handled by arguments quite different from ours. To our knowledge, Theorem \ref{t.application} is new even when $X, X'$ are Fano complete intersections of index 2 of dimension $\geq 3$, which always satisfy the conditions of Theorem \ref{t.application}.

\medskip
This paper is organized as follows.
In Section \ref{s.curves}, we present the basic definitions and some general results on covering families of curves.
We introduce the condition (P) in Section \ref{s.P} and the  condition (B) in Section \ref{s.B}.   The proof of Theorem \ref{t.main} is given  in Section \ref{s.proof}. Section \ref{s.etale} is the verification that the webs in Theorem \ref{t.rational} satisfy the two conditions.
In Section \ref{s.pleat}, we introduce the concept of pleated webs, which is the key in the argument of \textsf{Step 2}. Using this concept, we will prove Theorem \ref{t.ultim} and Theorem \ref{t.application} in Section \ref{s.line}.

\medskip
{\bf Convention}

 \begin{itemize} \item[1.] We work over the complex numbers. Open sets and neighborhoods refer to Euclidean topology, unless otherwise specified. \item[2.] An analytic (resp. algebraic) variety is an irreducible reduced complex space (resp. algebraic scheme). A Zariski open subset of an analytic variety means the complement of a closed analytic subset.
 \item[3.] Let
$f: X \to Y$ be a  holomorphic map between  varieties. For a nonsingular point $x \in X$ with $y= f(x)$ a nonsingular point of $Y$,
we say that $f$ is unramified (resp. submersive) at  $x$ if the derivative ${\rm d}f_x: T_x(X) \to T_{y}(Y)$ is injective (resp. surjective).   When  $f: X \dasharrow Y$ is a meromorphic map, we say that $f$ is generically submersive if its germ at a general point of $X$  is submersive, and $f$ is generically biholomorphic if its germ at a general point of $X$ is biholomorphic.
\end{itemize}

\section{Covering families of curves}\label{s.curves}

\begin{definition}\label{d.family}
Let $M$ be a projective variety. \begin{itemize} \item[(i)]
A projective subvariety $\sF \subset {\rm Chow}^1(M)$ of the Chow variety of 1-cycles on $M$ is called an {\em irreducible covering family of curves} on $M$  if the following conditions hold for the universal family morphisms $\rho_{\sF}: {\rm Univ}_{\sF} \to {\sF}$  and $\mu_{\sF}: {\rm Univ}_{\sF} \to M$ (see I.3 of \cite{Ko} for the definition of  ${\rm Univ}_{\sF}$):
\begin{itemize}
\item[(1)] a general fiber of $\rho_{\sF}$ is irreducible and reduced; and
 \item[(2)] $\mu_{\sF}$ is surjective.
  \end{itemize}
\item[(ii)]  A {\em covering family of curves} on $M$ means a finite union of irreducible covering families of curves.
\item[(iii)] We will denote by ${\rm Univ}^{\rm sm}_{\sF}$ the dense Zariski  open subset in
 ${\rm Univ}_{\sF}$ consisting of nonsingular points of ${\rm Univ}_{\sF}$ where the morphism $\rho_{\sF}$ is smooth.
 \end{itemize}
\end{definition}


\begin{definition}\label{d.pushpull}
 Let $\sF \subset {\rm Chow}^1(M)$ be a covering family of curves on a projective variety $M$.  \begin{itemize} \item[(i)] For a surjective morphism $g:M \to M^{'}$ to a projective variety $M^{'}$ that  does not contract general members of any irreducible component of $\sF$,  the images under $g$ of the general members of irreducible components of $\sF$  determine a covering family of curves on $M^{'}$ which will be denoted by $g_* \sF \subset {\rm Chow}^1(M^{'})$. We have a natural dominant rational map
 ${\rm univ}_{g_*\sF}: {\rm Univ}_{\sF} \dasharrow {\rm Univ}_{g_*\sF}$.
 \item[(ii)] For a generically finite morphism $f: M^{'} \to M$ from a projective variety $M^{'}$, the inverse images under $f$ of the general members of irreducible components of $\sF$ determine a covering family of curves on $M^{'}$ which will be denoted by $f^*\sF \subset {\rm Chow}^1(M^{'})$.
     We have a natural generically finite rational map ${\rm univ}_{f^*\sF} : {\rm Univ}_{f^*\sF} \dasharrow {\rm Univ}_{\sF}$. We will denote by ${\rm Bir}(f^*\sF)$ (resp. ${\rm Mult}(f^*\sF)$) the union of irreducible components of $f^* \sF$ general members of which are sent to members of $\sF$ birationally (resp. not birationally) by $f$, so that $f^* \sF = {\rm Bir}(f^*\sF) \cup {\rm Mult}(f^*\sF)$.
     \end{itemize}
 Note that in our definitions of $g_*\sF$ and $f^* \sF$, the images and the inverse images of members of $\sF$ are taken in the set-theoretical sense, not in cycle-theoretic sense.
\end{definition}

Now we recall some facts on finite-order jet spaces of curves.
The fiber bundle $\J^k (M)$ in the next definition is a Zariski open subset in the Semple $k$-jet bundle of $T(M)$ in the sense of \cite{De}. We refer those who want a more precise presentation to Section 5 and Section 6 of \cite{De}, but we do not need the structure theory developed there.

\begin{definition}\label{d.jet}
Let $M$ be a complex manifold. For a nonnegative integer $k \geq 0$, two germs  of 1-dimensional submanifolds at a point $x \in M$ are {\em $k$-jet equivalent}  if they have contact order at least $k$ at $x$. \begin{itemize} \item[(1)] Denote by $\J^k_x(M)$ the complex manifold consisting of the $k$-jet equivalence classes  of germs of  1-dimensional submanifolds at $x \in M$.
This complex manifold $\J^k_x(M)$ has a natural structure of quasi-projective algebraic variety.
The  union $\J^k (M) = \cup_{x \in M} \J^k_x(M)$ is  a holomorphic fiber bundle on $M$ with a natural projection
 $\pi_M^k: \J^k (M) \to M$. For example, $\J^0 (M) = M$ and $\J^1 (M) = \BP T(M)$.
  \item[(2)]
  A biholomorphic map between complex manifolds $f: M \to M'$  induces a biholomorphic  fiber bundle morphism ${\rm d}^k f: \J^k (M) \to \J^k (M')$
   for each $k \geq 0$ satisfying the commuting diagram $$ \begin{array}{ccc}  \J^k (M) & \stackrel{{\rm d}^k f}{\longrightarrow} & \J^k (M') \\ \pi^k_M \downarrow & & \downarrow \pi^k_{M'} \\ M & \stackrel{f}{\longrightarrow} & M'. \end{array}$$
 \item[(3)] Let $\overline{M}$ be an analytic variety and let $M$ be its smooth locus. Then there exists an analytic variety $\J^k(\overline{M})$ with a meromorphic map $\pi^k_{\overline{M}}: \J^k(\overline{M}) \dasharrow \overline{M}$ such that
       the restriction of $\pi^k_{\overline{M}}$ to the inverse image of $M$ is naturally isomorphic to $\pi^k_M$.
        A generically biholomorphic meromorphic map $f: \overline{M} \dasharrow \overline{M'}$ between analytic varieties   induces a generically biholomorphic meromorphic map ${\rm d}^k f: \J^k (\overline{M}) \dasharrow \J^k (\overline{M'})$
        which agrees with ${\rm d}^k f_x$ of (2) for the biholomorphic germ $f_x$ of $f$ at a general point $x \in M$.
        %

\end{itemize}
  \end{definition}

 \begin{proposition}\label{p.injection}
 Let $\sF$ be a covering family of curves on a projective variety $M$ with the universal family $\rho_{\sF}: {\rm Univ}_{\sF} \to \sF$ and $\mu_{\sF}: {\rm Univ}_{\sF} \to M$.   Then for each $ k \geq 0$, there exists a natural rational map $j^k_{\sF}: {\rm Univ}_{\sF} \dasharrow  \J^k (M)$ with the commuting diagram $$ \begin{array}{ccc} {\rm Univ}_{\sF} & \stackrel{j^k_{\sF}}{\dasharrow} &
 \J^k (M) \\ \mu_{\sF} \downarrow & & \downarrow \pi^k_M \\ M & = & M.
 \end{array} $$
  Furthermore, for sufficiently large $k$, the rational map $j^k_{\sF}$ is generically injective on ${\rm Univ}_{\sF}$. \end{proposition}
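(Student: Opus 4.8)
The plan is to build $j^k_{\sF}$ by recording, at a general point of ${\rm Univ}_{\sF}$, the $k$-jet of the member curve through that point, and then to deduce generic injectivity for large $k$ from a Noetherian stabilization argument together with the fact (Definition \ref{d.family}(i)(1)) that a general member of $\sF$ is a reduced irreducible curve. For the construction, let $u \in {\rm Univ}_{\sF}$ be general and let $C_u \subset M$ be the image under $\mu_{\sF}$ of the fibre of $\rho_{\sF}$ through $u$; by Definition \ref{d.family}(i)(1) this is a reduced irreducible curve, and $x := \mu_{\sF}(u)$ is a general --- hence smooth --- point of $C_u$ lying in the smooth locus of $M$, so the germ of $C_u$ at $x$ is the germ of a $1$-dimensional submanifold. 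Put $j^k_{\sF}(u) := $ its $k$-jet equivalence class in $\J^k_x(M)$. This prescription is algebraic on the dense Zariski open subset where it makes sense, hence defines a rational map $j^k_{\sF}: {\rm Univ}_{\sF} \dasharrow \J^k(M)$ with $\pi^k_M \circ j^k_{\sF} = \mu_{\sF}$ by construction; for $k=0$ it is $\mu_{\sF}$, and $j^1_{\sF}$ sends $u$ to the tangent line of $C_u$ at $x$. We may and do replace $\sF$ by one of its irreducible components, so that ${\rm Univ}_{\sF}$ is irreducible and all members of $\sF$ share a common degree $d$.

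For generic injectivity, note that composing $j^{k+1}_{\sF}$ with the jet-truncation $\J^{k+1}(M) \to \J^k(M)$ recovers $j^k_{\sF}$. Let $Z_k \subseteq {\rm Univ}_{\sF} \times {\rm Univ}_{\sF}$ be the Zariski closure of the locus of pairs $(u,u')$ at which $j^k_{\sF}$ is defined with $j^k_{\sF}(u) = j^k_{\sF}(u')$; the truncation property gives a descending chain $Z_0 \supseteq Z_1 \supseteq \cdots$, which by Noetherianity stabilizes to $Z_\infty = Z_{k_0}$ for some $k_0$. Since $(j^k_{\sF})^{-1}(j^k_{\sF}(u))$ lies in the fibre of $Z_k$ over $u$ for the first projection, it suffices to show that the diagonal $\Delta$ is the only irreducible component of $Z_\infty$ dominating ${\rm Univ}_{\sF}$ under that projection; then the fibre of $Z_\infty$ over a general $u$ is $\{u\}$, so $j^k_{\sF}$ is generically injective for every $k \geq k_0$. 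So suppose $Z' \neq \Delta$ were such a component, and take $(u,u')$ general in $Z'$, so $u$ is general in ${\rm Univ}_{\sF}$ and $u' \neq u$. Because $Z'$ is a component of $Z_m$ for $m \geq k_0$, and the truncations carry the equality down to smaller $m$, we get $j^m_{\sF}(u) = j^m_{\sF}(u')$ for all $m$; in particular $x := \mu_{\sF}(u) = \mu_{\sF}(u')$, and the germ of $C_u$ at $x$ agrees, to infinite order, with a branch of the member $\rho_{\sF}(u')$. The Zariski closure of that branch is $C_u$ (irreducibility of $C_u$), so $C_u$ is a component of the support of the cycle $\rho_{\sF}(u')$; comparing degrees (both equal $d$) forces $\rho_{\sF}(u') = C_u = \rho_{\sF}(u)$. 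Thus $u$ and $u'$ lie on one fibre $F = \rho_{\sF}^{-1}(\rho_{\sF}(u))$, which for $u$ general is a general fibre mapped birationally onto $C_u$ by $\mu_{\sF}$; since $x = \mu_{\sF}(u) = \mu_{\sF}(u')$ is a general point of $C_u$, we conclude $u' = u$, a contradiction.

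The technical heart is the passage from the closed-set stabilization $Z_m = Z_\infty$ ($m \geq k_0$) back to honest jet identities: one must verify that a general point of the component $Z'$ genuinely satisfies the locally closed defining condition of every $Z_m$ --- equivalently, that $j^m_{\sF}$ is defined at both coordinates and takes equal values there for all $m$, not merely up to $k_0$ and not merely after passing to Zariski closures --- so that $C_u$ and $\rho_{\sF}(u')$ really share a branch of infinite contact order; after that, the fixed Chow degree collapses the two members into a single curve and the rest is immediate. The remaining ingredients --- algebraicity of $j^k_{\sF}$, smoothness of a general member at its general point, birationality of $\mu_{\sF}$ on a general fibre of $\rho_{\sF}$, and the degree argument identifying two effective $1$-cycles sharing a branch --- are standard, and the growth $\dim \J^k(M) \to \infty$ with $k$ against the fixed $\dim {\rm Univ}_{\sF}$ is the dimension-count heuristic underlying the statement.
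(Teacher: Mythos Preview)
Your proof is correct. The construction of $j^k_{\sF}$ matches the paper's exactly. For generic injectivity, the paper simply asserts in one line that in a bounded family smooth germs of members are determined by sufficiently high jets, whereas you supply an explicit proof of this via Noetherian stabilization of the incidence loci $Z_k \subset {\rm Univ}_{\sF}\times{\rm Univ}_{\sF}$ followed by a degree comparison in ${\rm Chow}^1(M)$; this is the standard way to justify the paper's assertion, so the two approaches agree in spirit, with yours being the fleshed-out version. The one genuine subtlety you correctly flag---passing from the stabilization of the \emph{closures} $Z_m$ back to an honest infinite-order contact at a single pair $(u,u')$---is resolved over $\C$ by choosing $(u,u')$ very general in the component $Z'$, so that it lies in the countable intersection of the dense open loci where each $j^m_{\sF}$ is defined and takes equal values; after that your degree argument (both cycles lie in one irreducible component of $\sF$, hence have the same Chow degree, and $C_u$ is reduced irreducible) cleanly forces $\rho_{\sF}(u')=\rho_{\sF}(u)$ and then $u'=u$.
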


\begin{proof}
The map $j^k_{\sF}$ is defined by considering the $k$-jet equivalence classes of the germs of members of $\sF$ at their nonsingular points. The commuting diagram is  immediate from the definition. The generic injectivity of $j^k_{\sF}$ for large $k$ follows from the fact that
for a fixed bounded family $\sF$ of curves on $M$, smooth germs of their members are determined  by their $k$-jets for a sufficiently large $k$.
\end{proof}

\begin{proposition}\label{p.Psi}
Let $\sF $ (resp. $\sF'$) be a covering family of curves on a projective variety $M$ (resp $M'$) with the universal family
$\rho_{\sF}: {\rm Univ}_{\sF} \to \sF$ (resp. $\rho_{\sF'}: {\rm Univ}_{\sF'} \to \sF'$) and $\mu_{\sF}: {\rm Univ}_{\sF} \to M$ (resp.  $\mu_{\sF'}: {\rm Univ}_{\sF'} \to M'$). Let $\phi: V \dasharrow M'$ and $\psi: {\bf U} \dasharrow {\rm Univ}_{\sF'}$ be  generically biholomorphic meromorphic maps from open subsets  $V \subset M$ and ${\bf U} \subset {\rm Univ}_{\sF}$  such that \begin{itemize}
\item[(1)]  ${\bf U}$  intersects
every irreducible component of ${\rm Univ}_{\sF}$;
\item[(2)] $\mu_{\sF}({\bf U}) \subset V$;  \item[(3)] the diagram $$ \begin{array}{ccccc} {\rm Univ}_{\sF} & \supset & {\bf U} & \stackrel{\psi}{\dasharrow} &  {\rm Univ}_{\sF'} \\
\mu_{\sF} \downarrow & &  \downarrow & & \downarrow \mu_{\sF'} \\
M & \supset & V & \stackrel{\phi}{\dasharrow}  & M' \end{array} $$ commutes; and
\item[(4)] $\phi \left( \mu_{\sF}(\rho_{\sF}^{-1}(\rho_{\sF}(y)) \cap {\bf U})\right) \subset \mu_{\sF'}\left(\rho_{\sF'}^{-1} (\rho_{\sF'} (\psi (y)))  \right)$ for  a general $ y \in {\bf U}$. \end{itemize}
Then there exists  a generically biholomorphic meromorphic map
$\Psi: \mu_{\sF}^{-1}(V) \dasharrow {\rm Univ}_{\sF'}$ such that $$ \begin{array}{ccccc} {\rm Univ}_{\sF} & \supset & \mu_{\sF}^{-1}(V) & \stackrel{\Psi}{\dasharrow} & {\rm Univ}_{\sF'} \\
\mu_{\sF} \downarrow & &  \downarrow & & \downarrow \mu_{\sF'} \\
M & \supset & V & \stackrel{\phi}{\dasharrow}  & M' \end{array} $$ commutes and $\Psi|_{{\bf U}} = \psi.$ \end{proposition}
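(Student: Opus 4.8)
The plan is to realise $\Psi$ as a jet-transport map. Fix $k$ large enough that both $j^k_{\sF}$ and $j^k_{\sF'}$ are generically injective (Proposition \ref{p.injection}), and let ${\rm d}^k\phi$ be the meromorphic map induced by the generically biholomorphic $\phi$ as in Definition \ref{d.jet}. By condition (2) the rational map $j^k_{\sF}$ carries a general point of $\mu_{\sF}^{-1}(V)$ to a $k$-jet lying over $V$, at which ${\rm d}^k\phi$ is defined and biholomorphic, so
$$\Theta \ :=\ {\rm d}^k\phi \circ \bigl(j^k_{\sF}|_{\mu_{\sF}^{-1}(V)}\bigr)\ :\ \mu_{\sF}^{-1}(V) \dasharrow \J^k(M')$$
is a well-defined meromorphic map. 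I will show that $\Theta$ generically lands in the image of $j^k_{\sF'}$ and then set $\Psi := (j^k_{\sF'})^{-1}\circ\Theta$; since $j^k_{\sF'}$ is generically injective it is birational onto its (constructible, hence Zariski-closable) image, so $(j^k_{\sF'})^{-1}$ is a legitimate rational map and $\Psi$ will be meromorphic.

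First I would compute $\Theta$ on $\mathbf{U}$. For a general $y\in\mathbf{U}$ put $C:=\rho_{\sF}(y)$ and let $C':=\mu_{\sF'}(\rho_{\sF'}^{-1}(\rho_{\sF'}(\psi(y))))$ be the member of $\sF'$ underlying $\rho_{\sF'}(\psi(y))$. Condition (4) says $\phi$ maps $\mu_{\sF}(\rho_{\sF}^{-1}(C)\cap\mathbf{U})$ into $C'$; since $\rho_{\sF}^{-1}(C)\cap\mathbf{U}$ is open in the curve $\rho_{\sF}^{-1}(C)$ and contains $y$, this set contains a neighbourhood of $\mu_{\sF}(y)$ on $C$, and as $\phi$ is biholomorphic near the general point $\mu_{\sF}(y)$ it carries the germ of $C$ at $\mu_{\sF}(y)$ isomorphically onto the germ of $C'$ at $\phi(\mu_{\sF}(y))$. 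Using condition (3), $\phi(\mu_{\sF}(y))=\mu_{\sF'}(\psi(y))$, so ${\rm d}^k\phi$ sends $j^k_{\sF}(y)$, the $k$-jet of $C$ at $\mu_{\sF}(y)$, to the $k$-jet of $C'$ at $\mu_{\sF'}(\psi(y))$, i.e. to $j^k_{\sF'}(\psi(y))$. Hence $\Theta|_{\mathbf{U}}=j^k_{\sF'}\circ\psi$ as meromorphic maps on $\mathbf{U}$.

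The main point is to propagate this identity from $\mathbf{U}$ to all of $\mu_{\sF}^{-1}(V)$. By condition (1), $\mathbf{U}$ meets every irreducible component of ${\rm Univ}_{\sF}$, and being Euclidean-open it is Zariski dense in each of them. Fix an irreducible component $D$ of the domain of definition of $\Theta$; then $\mathbf{U}\cap D$ is a nonempty Euclidean-open, hence Zariski dense, subset of $D$. The analytic set $(\Theta|_D)^{-1}\bigl(\overline{\Theta(\mathbf{U}\cap D)}^{\,{\rm Zar}}\bigr)$ is closed in $D$ and contains the nonempty open set $\mathbf{U}\cap D$, so, $D$ being irreducible, it equals $D$; therefore $\overline{\Theta(D)}^{\,{\rm Zar}}=\overline{\Theta(\mathbf{U}\cap D)}^{\,{\rm Zar}}$, which by the previous paragraph lies in $\overline{j^k_{\sF'}({\rm Univ}_{\sF'})}^{\,{\rm Zar}}$. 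Running over all components shows $\Theta$ generically lands in the image of $j^k_{\sF'}$, so $\Psi:=(j^k_{\sF'})^{-1}\circ\Theta$ is a genuine meromorphic map $\mu_{\sF}^{-1}(V)\dasharrow{\rm Univ}_{\sF'}$.

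It remains to record the asserted properties. On $\mathbf{U}$ one has $\Psi=(j^k_{\sF'})^{-1}\circ j^k_{\sF'}\circ\psi=\psi$ by generic injectivity of $j^k_{\sF'}$. The lower square commutes: using the compatibility diagrams of Proposition \ref{p.injection} and Definition \ref{d.jet}(2), $\mu_{\sF'}\circ\Psi=\pi^k_{M'}\circ j^k_{\sF'}\circ\Psi=\pi^k_{M'}\circ\Theta=\pi^k_{M'}\circ{\rm d}^k\phi\circ j^k_{\sF}=\phi\circ\pi^k_M\circ j^k_{\sF}=\phi\circ\mu_{\sF}$. Finally $\Psi$ is generically biholomorphic: it is a composition of the generically injective maps $j^k_{\sF}$, the generically biholomorphic ${\rm d}^k\phi$, and $(j^k_{\sF'})^{-1}$, hence generically injective; and since $\mathbf{U}$ meets every component of $\mu_{\sF}^{-1}(V)$ and $\Psi|_{\mathbf{U}}=\psi$ is generically biholomorphic, each source component and the target component it is sent to have equal dimension, so $\Psi$ is birational onto its image in ${\rm Univ}_{\sF'}$, i.e. generically biholomorphic. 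The one real obstacle is the third paragraph: from condition (4) one directly controls $\phi$ only on members of $\sF$ meeting $\mathbf{U}$, whereas a member may meet $V$ in strands disjoint from $\mathbf{U}$; the resolution is that $\Theta$, agreeing with $j^k_{\sF'}\circ\psi$ on the Zariski dense set $\mathbf{U}$, is forced to have its entire image inside $\overline{j^k_{\sF'}({\rm Univ}_{\sF'})}^{\,{\rm Zar}}$, after which generic injectivity of $j^k_{\sF'}$ delivers $\Psi$.
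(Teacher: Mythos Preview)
Your proof is correct and follows essentially the same route as the paper: both fix $k\gg 0$ so that $j^k_{\sF}$ and $j^k_{\sF'}$ are generically injective, use conditions (2)--(4) to establish $\Theta|_{\mathbf U}=j^k_{\sF'}\circ\psi$ (the paper phrases this as a commuting diagram), and then invoke condition (1) to extend from $\mathbf U$ to $\mu_{\sF}^{-1}(V)$ via Zariski density before inverting $j^k_{\sF'}$. Your write-up is somewhat more explicit in the propagation step and in the verification of the final properties, but there is no substantive difference in strategy.
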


\begin{proof} As in Definition \ref{d.jet} (3),
the meromorphic map $\phi$ induces a meromorphic map
  $${\rm d}^k \phi: \J^k (V) \dasharrow \J^k(M')$$
  such that the induced map on the fiber $${\rm d}^k_x \phi: \J^k_x(V) \longrightarrow \J^k_{\phi(x)}(M')$$ is biregular for a general $x \in V$.
  Let $j_{\sF}^k: {\rm Univ}_{\sF} \dasharrow  \J^k (M)$ (resp. $j_{\sF'}^k: {\rm Univ}_{\sF'} \dasharrow \J^k (M')$) be the rational map defined  in Proposition \ref{p.injection}. Fix $k>>0$ such that both $j^k_{\sF}$ and $j^k_{\sF'}$ are generically injective.
  The conditions (2)-(4) on $\psi$ imply that the following diagram of meromorphic maps commute:
$$   \begin{array}{ccccccc} {\bf U} & \subset & {\rm Univ}_{\sF} & \stackrel{j^k_{\sF}}{\dashrightarrow} & \J^k (M) & \supset & \J^k (V) \\
\psi \downarrow & & & & & & \downarrow {\rm d}^k \phi \\ {\rm Univ}_{\sF'} & = &  {\rm Univ}_{\sF'} & \stackrel{j^k_{\sF'}}{\dashrightarrow} &  \J^k (M') & = & \J^k (M'). \end{array} $$
Since ${\rm d}^k\phi$ is biregular over a general point of $V$, it gives a generically biholomorphic meromorphic map between the proper images $$j^k_{\sF}({\bf U}) \mbox{ and } j^k_{\sF'}(\psi({\bf U})).$$ By the condition (1), it  gives a generically biholomorphic  meromorphic map from the proper image  $j^k_{\sF}({\rm Univ}_{\sF}) \cap (\pi_M^k)^{-1}(V)$ to $  j^k_{\sF'}({\rm Univ}_{\sF'}).$ As $j^k_{\sF}$ and $j^k_{\sF'}$ are generically injective, we have a meromorphic map $\Psi: \mu_{\sF}^{-1}(V) \dasharrow {\rm Univ}_{\sF'}$ with the desired properties.
\end{proof}

\section{Pairwise non-integrable webs}\label{s.P}
\begin{definition}\label{d.web} Let $M$ be a projective variety. \begin{itemize} \item[(i)]
A covering family $\sW$ of curves on $M$ is called a {\em web of curves} if the universal family morphism
$\mu_{\sW}: {\rm Univ}_{\sW} \to M$ is generically finite. Thus a web of curves $\sW$ has pure dimension equal to $\dim M -1$.
\item[(ii)] Given two webs of curves $\sW$ and $\sV$ on $M$, we say that $\sV$ is a {\em subweb} of $\sW$ if $\sV \subset \sW$ as subsets of ${\rm Chow}^1(M)$.  \item[(iii)]
A web $\sW$ of curves is  {\em univalent} if $\mu_{\sW}$ is birational.  \item[(iv)] Any surjective morphism $f: M \to B$ to a projective variety $B$ with $\dim M - \dim B =1$ determines canonically an irreducible univalent web of curves on $M$, to be denoted by ${\rm Fib}(f)$, a general member of which is an irreducible component of a general fiber of $f.$  \end{itemize} \end{definition}

\begin{remark}
Our definition of a web of curves is more general than the one used in \cite{HM03}.  The definition of a web given in \cite{HM03} corresponds to an \'etale web, to be introduced in Section \ref{s.etale}. \'Etale webs are much more restrictive, although our main applications, Theorem \ref{t.ultim} and Theorem \ref{t.rational},  are concerned with them. \end{remark}

The following is immediate by dimension-counting.

\begin{lemma}\label{l.choose}
Let $\sW$ be a web of curves on a projective variety $M$ with the universal family morphisms $\mu_{\sW}: {\rm Univ}_{\sW} \to M$ and    $\rho_{\sW}: {\rm Univ}_{\sW} \to \sW$. For any dense Zariski open subset $\sW_o \subset \sW$, there exists a dense Zariski open subset $M_o \subset M$ such that any member $C$ of $\sW$ satisfying $C \cap M_o \neq \emptyset$ belongs to $\sW_o$. \end{lemma}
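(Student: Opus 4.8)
The plan is to exploit that $\mu_{\sW}$ is generically finite and both morphisms $\mu_{\sW},\rho_{\sW}$ are proper. First I would pass from the open set $\sW_o\subset\sW$ to its complement $Z:=\sW\setminus\sW_o$, a closed subvariety of $\sW$ of dimension strictly less than $\dim\sW=\dim M-1$. Set $W_Z:=\rho_{\sW}^{-1}(Z)\subset{\rm Univ}_{\sW}$; since $\rho_{\sW}$ has one-dimensional fibres, $\dim W_Z\le\dim Z+1\le\dim M-1$. The key observation is then that $E:=\mu_{\sW}(W_Z)$ is a closed subvariety of $M$ (by properness of $\mu_{\sW}$) of dimension at most $\dim M-1$, hence a proper closed subset. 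I would take $M_o:=M\setminus E$, which is a dense Zariski open subset of $M$.

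It remains to check that this $M_o$ has the asserted property. Let $C$ be any member of $\sW$, i.e. a point $[C]\in\sW$, with $C\cap M_o\ne\emptyset$. The curve $C$ is the image $\mu_{\sW}(\rho_{\sW}^{-1}([C]))$ of the corresponding fibre of $\rho_{\sW}$. If $[C]$ were to lie in $Z=\sW\setminus\sW_o$, then $\rho_{\sW}^{-1}([C])\subset W_Z$, so $C=\mu_{\sW}(\rho_{\sW}^{-1}([C]))\subset\mu_{\sW}(W_Z)=E$, forcing $C\cap M_o=C\cap(M\setminus E)=\emptyset$, a contradiction. Hence $[C]\in\sW_o$, i.e. $C$ belongs to $\sW_o$, as required.

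I expect the only genuine point requiring care to be the dimension bookkeeping, namely verifying $\dim\mu_{\sW}(W_Z)\le\dim M-1$. This uses two facts: that $\rho_{\sW}$ is equidimensional of relative dimension $1$ on the relevant locus (so $\dim W_Z\le\dim Z+1$), which holds because members of $\sW$ are curves and a general fibre of $\rho_{\sW}$ is an irreducible reduced curve by Definition \ref{d.family}(i)(1); and that $\mu_{\sW}$ does not increase dimension, which is automatic. If one is worried about non-reduced or reducible special fibres of $\rho_{\sW}$ inflating the dimension, one restricts first to the dense open locus where the fibre is a genuine curve and notes the bad locus is already lower-dimensional and contributes an even smaller image in $M$; alternatively one simply works with $\mathrm{Univ}^{\mathrm{sm}}_{\sW}$ from Definition \ref{d.family}(iii) and handles the complement separately. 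Either way the argument is the elementary dimension count indicated above, which is why the lemma is stated as "immediate by dimension-counting."
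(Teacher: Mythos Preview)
Your argument is correct and is exactly the dimension count the paper has in mind when it declares the lemma ``immediate by dimension-counting'': the complement $Z=\sW\setminus\sW_o$ has dimension $\le \dim M-2$, its preimage in ${\rm Univ}_{\sW}$ has dimension $\le \dim M-1$, and its image under $\mu_{\sW}$ is therefore a proper closed subset of $M$. Your caution about special fibres of $\rho_{\sW}$ is unnecessary, since by construction every fibre of the universal family over ${\rm Chow}^1(M)$ is a $1$-cycle and hence $1$-dimensional, but this does not affect the validity of the proof.
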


The term `web' originates from the notion of a web-structure in local differential geometry, defined as follows.

\begin{definition}\label{d.regular} Let $U$ be a complex manifold.
A {\em web-structure} (of rank 1) on $U$ is a finite collection of line
subbundles $$W_i \subset T(U), \;  1 \leq i \leq d$$
for some integer $d \geq 1$ such that for any  $1 \leq i \neq j \leq d,$ the intersection
$W_i\cap W_j \subset T(U)$ is the zero section.
If we  regard $W_i$ as a 1-dimensional foliation on $U$,
the condition implies that the leaves of $W_i$ and $W_j$ intersect transversally.
\end{definition}

\begin{proposition}\label{p.regular} Let $\sW$ be a web of curves on a projective variety $M$ with the universal family morphisms $\mu_{\sW}: {\rm Univ}_{\sW} \to M$ and
     $\rho_{\sW}: {\rm Univ}_{\sW} \to \sW$.
 Let $d$ be the degree of  $\mu_{\sW}$. Then there exists a Zariski open subset $M_{\rm reg} \subset M$ such that each $x \in M_{\rm reg}$ has an open neighborhood ${\rm Reg}(x) \subset M_{\rm reg}$ satisfying the following conditions. \begin{itemize}
\item[(i)]    $\mu^{-1}_{\sW} ({\rm Reg}(x))$ consists of $d$  disjoint connected open subsets $$R_1, \ldots, R_d \ \subset {\rm Univ}^{\rm sm}_{\sW}$$ each of which is biholomorphic to ${\rm Reg}(x)$ by $\mu_{\sW}$.
\item[(ii)] For each $1 \leq i \leq d$,  each fiber of $\rho_{\sW}|_{R_i}$ is connected.
\item[(iii)] Let  $W_i \subset T({\rm Reg}(x)),  1 \leq i \leq d,$ be the image of tangents to fibers of $\rho_{\sW}|_{R_i}$. Then
 $W_1, \ldots, W_d$ give a web-structure on ${\rm Reg}(x)$. \end{itemize}
 In particular, for any member $C$ of $\sW$, the intersection $C \cap {\rm Reg}(x)$ is smooth. \end{proposition}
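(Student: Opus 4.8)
The plan is to build the Zariski open set $M_{\rm reg}$ in stages, by successively removing the "bad loci" where each defining property might fail, and then to extract the local neighborhood ${\rm Reg}(x)$ from a trivialization of the generically finite cover $\mu_{\sW}$. First I would pass to the open subset ${\rm Univ}^{\rm sm}_{\sW} \subset {\rm Univ}_{\sW}$ of Definition \ref{d.family}(iii), over which $\rho_{\sW}$ is smooth and ${\rm Univ}_{\sW}$ is nonsingular, and I would let $B \subset M$ be the union of (a) the branch locus of $\mu_{\sW}$, (b) the image under $\mu_{\sW}$ of the complement ${\rm Univ}_{\sW} \setminus {\rm Univ}^{\rm sm}_{\sW}$, (c) the image of the locus where $\mu_{\sW}$ fails to be \'etale of degree exactly $d$ or where distinct sheets become tangent, and (d) the non-normal or singular locus of $M$ itself. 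By upper-semicontinuity of fiber dimension and properness of $\mu_{\sW}$, each of these is a proper closed subset, so $M_{\rm reg} := M \setminus B$ is a dense Zariski open subset, all of whose points are nonsingular in $M$.

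Next, for $x \in M_{\rm reg}$, since $\mu_{\sW}$ is a finite \'etale morphism of degree $d$ over $M_{\rm reg}$ in a neighborhood of $x$ (after possibly shrinking $M_{\rm reg}$ to the locus where $\mu_{\sW}$ is finite, which is again Zariski open by properness and generic finiteness), I can choose a connected open neighborhood ${\rm Reg}(x) \subset M_{\rm reg}$ over which $\mu_{\sW}^{-1}({\rm Reg}(x))$ splits as a disjoint union of $d$ sheets $R_1, \ldots, R_d$, each mapping biholomorphically to ${\rm Reg}(x)$ via $\mu_{\sW}$; this is property (i), and it lives inside ${\rm Univ}^{\rm sm}_{\sW}$ by our choice of $B$. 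For property (ii), the fibers of $\rho_{\sW}$ over a general point of $\sW$ are irreducible and reduced by Definition \ref{d.family}(i)(1); removing from $\sW$ the closed locus where this fails and applying Lemma \ref{l.choose} lets me shrink $M_{\rm reg}$ further so that every member of $\sW$ meeting $M_{\rm reg}$ has irreducible reduced universal fiber, hence after shrinking ${\rm Reg}(x)$ to a simply connected (e.g. polydisc) neighborhood, each fiber of $\rho_{\sW}|_{R_i}$ is connected.

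For property (iii), on each sheet $R_i$ the fibers of $\rho_{\sW}|_{R_i}$ are 1-dimensional submanifolds (since $\rho_{\sW}$ is smooth there), so their tangent lines, transported via the biholomorphism $\mu_{\sW}|_{R_i}$, give a line subbundle $W_i \subset T({\rm Reg}(x))$, i.e. a rank-1 holomorphic foliation. The remaining point is the transversality $W_i \cap W_j = 0$ for $i \neq j$: this says that two distinct sheets of $\mu_{\sW}$ passing through the same point of ${\rm Reg}(x)$ have distinct tangent directions. This is the step I expect to be the main obstacle, since a priori two members of $\sW$ through $x$ could be tangent. I would handle it by a dimension count on the Chow variety: the locus in ${\rm Univ}_{\sW} \times_M {\rm Univ}_{\sW}$ (fiber product over $\mu_{\sW}$, away from the diagonal) where the two jet lines $j^1_{\sW}$ agree is closed, and if it dominated $M$ then a general point of $M$ would lie on two members of $\sW$ sharing a tangent direction; but then, since $\sW$ is a web (so $\mu_{\sW}$ is generically finite of the same degree $d$ and dimension $\dim M - 1$), one checks this forces a positive-dimensional fiber of the jet map or an identification of components, contradicting the genericity — more directly, over the \'etale locus the sheets are already disjoint in ${\rm Univ}_{\sW}$, and their images under the tangent map $j^1_{\sW}: {\rm Univ}_{\sW} \dashrightarrow \BP T(M)$ of Proposition \ref{p.injection} must be generically distinct, since otherwise two distinct sheets would be identified by the generically injective map $j^k_{\sW}$ for $k \gg 0$. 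Adding the closure of the locus where two sheets share a $1$-jet to $B$ (still proper, by the above) gives the transversality on ${\rm Reg}(x)$, completing the verification of the web-structure. Finally, the last assertion is immediate: for any member $C$ of $\sW$ meeting ${\rm Reg}(x)$, the intersection $C \cap {\rm Reg}(x)$ is the image under $\mu_{\sW}$ of a fiber of $\rho_{\sW}$ inside some $R_i \subset {\rm Univ}^{\rm sm}_{\sW}$, hence smooth.
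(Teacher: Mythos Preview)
Your proof is correct and follows the same approach as the paper's: remove the appropriate bad loci from $M$ and split the \'etale cover $\mu_{\sW}$ locally. The paper's own argument is much terser --- for (iii) it simply observes that the $W_i$ are distinct line subbundles because $\sW \subset {\rm Chow}^1(M)$ (distinct Chow points give distinct curves, so the induced foliations cannot coincide identically), hence the locus where two of them are tangent is a proper closed set that can be absorbed into $M \setminus M_{\rm reg}$; your route through the jet maps $j^k_{\sW}$ reaches the same conclusion but is more machinery than needed.
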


\begin{proof}
The existence of a neighborhood ${\rm Reg}(x)$ of a general point $x \in M$ satisfying (i) and (ii) is immediate from Definition \ref{d.web}.
The line subbundles $W_i$ defined  in (iii) are distinct because $\sW \subset {\rm Chow}^1(M)$. Thus  we can achieve (iii) by choosing $M_{\rm reg}$
suitably. \end{proof}

A fundamental notion in the local differential geometry  is the equivalence of  web-structures in the following sense.

\begin{definition}\label{d.preserve} Let $U$ (resp. $U'$) be a complex manifold with a
web-structure $W_i \subset T(U), \;  1 \leq i \leq d$ (resp. $W'_j \subset T(U'), \;  1 \leq j \leq d'$).
A biholomorphic map $\varphi: U \to U'$ {\em sends the web-structure} $W_i, 1 \leq i \leq d$ {\em into the web-structure}
 $W'_j, 1 \leq j \leq d'$  if ${\rm d} \varphi: T(U) \to T(U')$
sends each $W_i$ to some $W'_j$. If furthermore $d = d'$, then we say that $\varphi$ is {\em an equivalence of the web-structures}.
\end{definition}

\begin{definition}\label{d.respect}
Let $\sW$ (resp. $\sW'$) be a web of curves on a projective variety $M$ (resp. $M'$).
Let $\varphi: U \to U'$ be a biholomorphic map between two open sets $U \subset M$ and
$U' \subset M'$.   For each point  $$x \ \in  \ U \cap M_{\rm reg} \cap \varphi^{-1}(M'_{\rm reg}), $$  we can choose
a neighborhood $U^x \subset U \cap {\rm Reg}(x)$  of $x$ (resp. $U^{\varphi(x)} \subset U' \cap {\rm Reg}(\varphi(x))$) with the  web-structure induced by $\sW$ (resp $\sW'$) as in Proposition \ref{p.regular}. By shrinking $U^x$ if necessary, we can assume that $\varphi(U^x) \subset U^{\varphi(x)}$.
 We say that $\varphi$ {\em sends} $\sW$ {\em into} $\sW'$ if the biholomorphic map $\varphi|_{U^x}: U^x \to \varphi(U^x)$ sends the web-structure induced by $\sW$ into the web-structure induced by $\sW'$ for some (hence any by analyticity) $x$ as above. We say that a meromorphic map $\psi: U \dasharrow U'$ sends $\sW$ into $\sW'$ if its biholomorphic germs at general points do so.
  \end{definition}

\begin{definition}\label{d.P}
A  web-structure $W_i \subset T(U), \;  1 \leq i \leq d$, on a complex manifold $U$ is {\em pairwise non-integrable}
if for each $i$, there exists $j \neq i$ such that the distribution $W_i + W_j \subset T(U)$ is not integrable. A web of curves $\sW$ on a projective variety $X$ is {\em pairwise non-integrable}  if the web-structure in ${\rm Reg}(x)$ for each $x \in X_{\rm reg}$ in
Proposition \ref{p.regular} is pairwise non-integrable. \end{definition}

\begin{remark}
The term `pairwise non-integrable' can be confusing, as it may suggest something different from Definition \ref{d.P}, for example, that  $W_i + W_j$ is not integrable for each (or some) pair $(i, j), i \neq j$.
Since we do not have a good alternative term, we will use it by an abuse of language. \end{remark}

\begin{definition}\label{d.ram}
     Let $\sW$ be a web of curves on a projective variety $X$ with the universal family morphisms $\mu_{\sW}: {\rm Univ}_{\sW} \to X$ and
     $\rho_{\sW}: {\rm Univ}_{\sW} \to \sW$.  Fix an  irreducible component $\sV$ of $\sW$ and write $f= \mu_{\sV}, g= \rho_{\sV}$ and $A= {\rm Univ}_{\sV}$. The morphism $f: A \to X$ is generically finite  and $g: A \to \sV$ has connected 1-dimensional fibers.    The inverse image $f^{*}\sW$ is a web of curves  on $A$.  \begin{itemize}
      \item[(i)] Define a decomposition $f^*\sW = {\rm Fib}(g) \cup {\rm Hor}(g)$ with no common components as follows.  By Definition \ref{d.web} (iv), the morphism $g$ gives the univalent web ${\rm Fib}(g)$ on $A$,  which is a subweb
of $f^{*} \sW$. Write ${\rm Hor}(g)$ for the union of components of $f^*\sW$ different from ${\rm Fib}(g)$, i.e., those components whose general members are `horizontal' with respect to $g$.
 \item[(ii)]  Because general members of components of ${\rm Hor}(g)$ are not contracted by $g: A\to \sV$, we can
apply the construction in Definition \ref{d.pushpull} (i) to obtain a covering family of curves $g_*{\rm Hor}(g)$ on $\sV$ and the induced rational map $${\rm univ}_{g_*{\rm Hor}(g)}: {\rm Univ}_{{\rm Hor}(g)} \dasharrow {\rm Univ}_{g_* {\rm Hor}(g)}.$$  \item[(iii)] Define a decomposition $ {\rm Hor}(g) =  {\rm Inf}(g) \cup {\rm Fin}(g)$ with no common components as follows. An irreducible component $\sH$ of ${\rm Hor}(g)$ belongs to ${\rm Fin}(g)$ (resp. ${\rm Inf}(g)$) if the restriction of ${\rm univ}_{g_*{\rm Hor}(g)}$ in (ii)  to ${\rm Univ}_{\sH}$ is generically finite (resp. not generically finite) over its image in ${\rm Univ}_{g_*{\rm Hor}(g)}$. In other words, $${\rm Univ}_{{\rm Hor}(g)} \stackrel{{\rm univ}_{g_*{\rm Hor}(g)}}{\dasharrow}  {\rm Univ}_{g_*{\rm Hor}(g)}$$ is generically finite on ${\rm Fin}(g)$ and has positive-dimensional fibers on ${\rm Inf}(g)$.  \end{itemize}
\end{definition}

Next proposition shows that members of ${\rm Fin}(g)$ and ${\rm Inf}(g)$ can be distinguished by a local property of the web-structure.

\begin{proposition}\label{p.integrable}
In the setting of Definition \ref{d.ram},  let $A_{\rm reg} \subset A$ be the Zariski open subset with respect to the web $f^*\sW$ on $A$ as in Proposition \ref{p.regular}.  For $y \in A_{\rm reg}$, let
$W_i, 1 \leq i \leq d,$ be the web-structure on ${\rm Reg}(y)$ induced by $f^{*}\sW$. Assume that leaves of $W_1$ belong to  the irreducible web ${\rm Fib}(g)$  and the leaves of  $W_2$ belong to an irreducible component $\sH$ of ${\rm Hor}(g)$.
  Then $\sH \subset {\rm Inf}(g)$ if and only if the distribution $W_1 + W_2$ on ${\rm Reg}(y)$
is integrable.   \end{proposition}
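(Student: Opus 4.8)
The plan is to reinterpret the algebraic dichotomy ${\rm Hor}(g)={\rm Inf}(g)\cup{\rm Fin}(g)$ of Definition~\ref{d.ram}(iii) geometrically, in terms of the fibres of $g$ over the curves produced in Definition~\ref{d.ram}(ii), and then to recognise the resulting condition as the Frobenius criterion for the rank-$2$ distribution $W_1+W_2$. Write $\sH$ for the component of ${\rm Hor}(g)$ tangent to $W_2$, and for a general member $\Gamma$ of the image family $g_*\sH\subset g_*{\rm Hor}(g)$ let $S_\Gamma\subset A$ be the union of the members of $\sH$ carried onto $\Gamma$ by $g$. A member $H$ of $\sH$ is not contracted by $g$, so $g|_H : H\to\Gamma$ is finite; hence $S_\Gamma\subseteq g^{-1}(\Gamma)$, and $\dim S_\Gamma\le 2$. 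Because a curve contains only finitely many irreducible curves, a positive-dimensional family of distinct members of $\sH$ lying inside $g^{-1}(\Gamma)$ must sweep out a surface; analysing ${\rm univ}_{g_*{\rm Hor}(g)}$ on ${\rm Univ}_\sH$ this way gives: $\sH\subseteq{\rm Fin}(g)$ iff only finitely many members of $\sH$ lie over a general $\Gamma$, i.e. $S_\Gamma$ is a curve; and $\sH\subseteq{\rm Inf}(g)$ iff a positive-dimensional family of members of $\sH$ lies over a general $\Gamma$, i.e. $S_\Gamma$ is a surface, necessarily a component of $g^{-1}(\Gamma)$, swept out by those members.

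Assume $\sH\subseteq{\rm Inf}(g)$. Pick a general $y\in A_{\rm reg}$, lying on a general member $H$ of $\sH$; putting $\Gamma:=g(H)$ we have $y\in S:=S_\Gamma$, and $S$ is smooth of dimension $2$ at $y$. Near $y$ the surface $S$ is a union of fibres of $g$ over $\Gamma$, so it is tangent to $W_1$; and it is swept out by members of $\sH$ all mapped onto $\Gamma$, so it is tangent to $W_2$. Hence $T_yS=W_1(y)\oplus W_2(y)=(W_1+W_2)(y)$, using $W_1\cap W_2=0$, and the bracket of local generators of $W_1$ and $W_2$, being tangent to the common integral surface $S$ along $S$, satisfies $[W_1,W_2](y)\in T_yS=(W_1+W_2)(y)$. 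The integrability of $W_1+W_2$ is a Zariski-closed condition on $A_{\rm reg}$ (vanishing of an algebraic torsion tensor), and it holds at the general $y$, hence on all of $A_{\rm reg}$; by the Frobenius theorem $W_1+W_2$ is integrable on every ${\rm Reg}(y)$.

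Conversely, assume $W_1+W_2$ is integrable, and fix a general $y\in A_{\rm reg}$. By Frobenius there is a germ at $y$ of a $2$-dimensional integral submanifold $S$. Being tangent to $W_1$, the germ $S$ is a union of germs of fibres of $g$, so $g(S)$ is a germ of curve $\Gamma_0$ with $g|_S$ of relative dimension $1$; being tangent to $W_2$, the germ $S$ is also a union of germs of members of $\sH$, each of which, being horizontal, is carried by $g$ onto an open piece of $\Gamma_0$. Thus $S$ carries a $1$-dimensional family of germs of members of $\sH$ with constant image $\Gamma_0$; spreading out — $\Gamma_0$ lies on a member $\Gamma$ of $g_*\sH$, and the germs globalise to honest members of $\sH$ by properness of the Chow variety — we get a positive-dimensional family of members of $\sH$ carried onto a single $\Gamma$, so $S_\Gamma$ is a surface and $\sH\subseteq{\rm Inf}(g)$ by the first paragraph.

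The main obstacle, in my view, is not the Frobenius step but the matching of analytic and algebraic data: in the first paragraph one must pin down exactly how $\dim S_\Gamma$ controls the fibre dimension of ${\rm univ}_{g_*{\rm Hor}(g)}$ over ${\rm Univ}_\sH$; in the direct implication one must ensure that the branch of $\sH$ through a general point of $S$ near $y$ is the one whose tangents define $W_2$ (rather than some other sheet of $\mu_{f^*\sW}$ lying over ${\rm Reg}(y)$), which uses the connectedness in Proposition~\ref{p.regular} and the genericity afforded by Lemma~\ref{l.choose}; and in the converse one must verify that the local foliation of $S$ by germs of members of $\sH$ really reflects a genuinely positive-dimensional algebraic family with constant $g$-image. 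Each of these is routine by standard spreading-out arguments, but together they are where the substance of the proof lies.
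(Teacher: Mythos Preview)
Your proposal is correct and follows essentially the same line as the paper's proof: in both directions the key object is the surface $g^{-1}(\Gamma)$ (your $S_\Gamma$, the paper's germ of $g^{-1}(g(C))$), which serves as the Frobenius leaf of $W_1+W_2$ exactly when $\sH\subset{\rm Inf}(g)$. The only differences are presentational --- you introduce $S_\Gamma$ as a global object first and then localise, and you make the Zariski-closedness of integrability explicit, whereas the paper works directly with germs at a general point --- but the geometric content is identical.
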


\begin{proof}
We can find a member $C \subset A$ of $\sH$ through a general point  $x \in {\rm Reg}(y)$ such that  $C \cap {\rm Reg}(y)$  is the leaf of $W_2$ through $x$.  Let $F$ be the leaf of $W_1$ through $x$, which is an open subset in
$g^{-1}(g(x))$.

If $\sH$ is a component of ${\rm Inf}(g)$, then deformations of $C$ intersecting  $F$, say, $$\{ C_t, t \in \Delta, C_0 =C \}$$
 are all sent by $g: A \to \sV$ to the same curve in $\sV$, i.e., $g(C_t) = g(C)$. Thus the germ of  $g^{-1}(g(C))$ at $x$ is  the integral surface of the distribution $W_1 + W_2$.

 Conversely, assume that $W_1 + W_2$ is integrable and let
$S \subset {\rm Reg}(y)$ be its 2-dimensional leaf through $x$. Then the germs of $g(S)$ and $g(C)$ at $g(x)$ coincide and  the members of $\sH$ whose intersections with ${\rm Reg}(y)$ are contained in $S$ must be sent to the curve $g(C)$ in $\sV$. It follows that ${\rm univ}_{g_* {\rm Hor}(g)}$ is not generically finite on
 ${\rm Univ}_{\sH}$, that is, $\sH \subset {\rm Inf}(g)$.  \end{proof}

\begin{corollary}\label{c.nonintegrable}
A web of  curves $\sW$ on a projective variety $X$ is pairwise non-integrable if and only if ${\rm Fin}(g)$ is not empty for any choice of $\sV$
in Definition \ref{d.ram}.
\end{corollary}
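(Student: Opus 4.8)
The plan is to deduce the corollary from Proposition~\ref{p.integrable}, the main task being to match the web-structure that $\sW$ induces on $X$ with the web-structure that $f^{*}\sW$ induces on $A:={\rm Univ}_{\sV}$, where $\sV$ is an irreducible component of $\sW$ and $f=\mu_{\sV}$, $g=\rho_{\sV}$ as in Definition~\ref{d.ram}. \emph{The comparison step.} First I would shrink the Zariski open set $A_{\rm reg}$ of Proposition~\ref{p.regular} (applied to the web $f^{*}\sW$ on $A$) so that it lies in the smooth locus of $A$, so that $f$ is unramified along it, and so that $f(A_{\rm reg})$ avoids the singular locus of $X$; this is possible since $f=\mu_{\sV}$ is generically finite. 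For $y\in A_{\rm reg}$ with $x=f(y)$, the map $f$ then carries a neighbourhood ${\rm Reg}(y)$ biholomorphically onto ${\rm Reg}(x)$. The $d=\deg\mu_{\sW}$ members of $\sW$ through a general $x$ are smooth and have pairwise distinct tangent directions (Proposition~\ref{p.regular}), so their set-theoretic $f$-preimages have exactly $d$ distinct smooth branches through $y$; a count over the irreducible components of $\sW$ gives $\deg\mu_{f^{*}\sW}=d$, so these branches are precisely the $d$ leaves of $f^{*}\sW$ at $y$, and ${\rm d}f$ identifies the two web-structures leaf by leaf. Moreover $g^{-1}(g(y))$ is, near $y$, an open subset of the curve $g(y)\in\sV$, which $f$ maps onto $g(y)\subset X$; hence the unique leaf of ${\rm Fib}(g)$ through $y$ corresponds under ${\rm d}f$ to the tangent direction at $x$ of the member $g(y)$ of $\sV$, while every other leaf of $f^{*}\sW$ at $y$ belongs to a component of ${\rm Hor}(g)$.

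\emph{The two implications.} I will use Proposition~\ref{p.integrable} in the form: if an irreducible component $\sH$ of ${\rm Hor}(g)$ has leaf $W_{\sH}$ at $y$, then the rank-two distribution $W_{{\rm Fib}(g)}+W_{\sH}$ is integrable on ${\rm Reg}(y)$ if and only if $\sH\subset{\rm Inf}(g)$, equivalently (as ${\rm Hor}(g)={\rm Inf}(g)\cup{\rm Fin}(g)$ has no common component and $\sH$ is irreducible) if and only if $\sH\not\subset{\rm Fin}(g)$. Suppose first that $\sW$ is pairwise non-integrable and fix $\sV$; pick a general $y\in A_{\rm reg}$, put $x=f(y)$, and let $W_{0}$ be the tangent at $x$ of $g(y)\in\sV$. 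By hypothesis there is a leaf $W_{j}\neq W_{0}$ of $\sW$ at $x$ with $W_{0}+W_{j}$ non-integrable; under ${\rm d}f^{-1}$ the leaf corresponding to $W_{j}$ is the leaf at $y$ of some irreducible component $\sH$ of ${\rm Hor}(g)$, and Proposition~\ref{p.integrable} forces $\sH\subset{\rm Fin}(g)$, so ${\rm Fin}(g)\neq\emptyset$. Conversely, assume ${\rm Fin}(g)\neq\emptyset$ for every $\sV$; it is enough to prove pairwise non-integrability at a general point $x\in X$, after which $X_{\rm reg}$ may be shrunk accordingly. Fix such an $x$ and a leaf $W_{i}$ at $x$, say the tangent of the member $C$ through $x$ of some irreducible component $\sV$ of $\sW$, and put $y=(C,x)$, which lies in $A_{\rm reg}$ when $x$ is general; then $W_{i}$ corresponds to the ${\rm Fib}(g)$-leaf at $y$. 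Choose a component $\sH\subset{\rm Fin}(g)$; since $\sH$ is a covering family on $A$, for general $x$ the point $y$ lies on a member of $\sH$, whose tangent at $y$ is a leaf of $f^{*}\sW$ in ${\rm Hor}(g)$ and hence corresponds under ${\rm d}f$ to some leaf $W_{j}\neq W_{i}$ of $\sW$ at $x$. By Proposition~\ref{p.integrable} the distribution $W_{{\rm Fib}(g)}+W_{\sH}$ is non-integrable at $y$, so $W_{i}+W_{j}$ is non-integrable at $x$, which proves pairwise non-integrability.

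\emph{The main obstacle} is the comparison step: one must check that the degree count makes the local picture of $f^{*}\sW$ near a general $y\in A$ a faithful copy of the local picture of $\sW$ near $f(y)$, with the ${\rm Fib}(g)$-leaf correctly distinguished. Once that dictionary is in place, the corollary is a formal consequence of Proposition~\ref{p.integrable}; the only further care needed is the harmless shrinking of the various Zariski open subsets so that ``general'' points can be chosen where required.
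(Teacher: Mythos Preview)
Your argument is correct and is exactly what the paper has in mind: the corollary is stated without proof, as an immediate consequence of Proposition~\ref{p.integrable}, and you have simply spelled out the one step the paper leaves implicit---namely, that near a general $y\in A$ the generically finite morphism $f=\mu_{\sV}$ is a local biholomorphism identifying the web-structure of $f^{*}\sW$ with that of $\sW$, with the ${\rm Fib}(g)$-leaf going to the leaf of $\sV$ through $f(y)$. Once that dictionary is set up, both implications are, as you say, formal from Proposition~\ref{p.integrable}.
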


\begin{proposition}\label{p.Fin}
Let $\sW$ on $X$ and  $\sW'$ on $X'$ be  webs of curves on  projective varieties.
 Fix an irreducible component $\sV$  of $\sW$ (resp.  $\sV'$ of $\sW'$) and denote by $g: A\to \sV$ and $f: A\to X$ (resp. $g': A' \to \sV'$ and $f': A' \to X'$) the universal family morphisms as in Definition \ref{d.ram}.  We have the webs ${\rm Fib}(g)$ and ${\rm Fin}(g)$  on $A$  (resp. ${\rm Fib}(g')$ and ${\rm Fin}(g')$ on $A'$).  Let $\varphi: U \to U'$ be a biholomorphic map between connected open subsets $U \subset A$
  and $U' \subset A'$, which sends $f^* \sW$ into $(f')^*\sW'$. If $\varphi$ sends ${\rm Fib}(g)$ into  ${\rm Fib}(g'),$ then it sends ${\rm Fin}(g)$ into ${\rm Fin}(g')$. \end{proposition}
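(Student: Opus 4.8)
The plan is to reduce the statement to a purely local characterization of the components ${\rm Fin}(g)$ and ${\rm Fin}(g')$ provided by Proposition \ref{p.integrable}, and then transport that characterization across $\varphi$. First I would pick a general point $y \in U$ lying in $A_{\rm reg}$ with $\varphi(y) \in A'_{\rm reg}$, shrink $U$ to sit inside ${\rm Reg}(y)$, and likewise shrink $\varphi(U)$ inside ${\rm Reg}(\varphi(y))$. Since $\varphi$ sends $f^*\sW$ into $(f')^*\sW'$, the derivative ${\rm d}\varphi$ carries the web-structure $W_1,\dots,W_d$ on ${\rm Reg}(y)$ into the web-structure $W'_1,\dots,W'_{d'}$ on ${\rm Reg}(\varphi(y))$, matching each $W_i$ with some $W'_{\sigma(i)}$; and by hypothesis the foliation tangent to the ${\rm Fib}(g)$-direction, say $W_1$, is sent to the foliation tangent to the ${\rm Fib}(g')$-direction, say $W'_1$.

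Next, let $\sH$ be any irreducible component of ${\rm Fin}(g)$ that has a leaf through (a general point near) $y$; so there is some $W_i$, $i \geq 2$, whose leaves lie in $\sH$. Set $\sH'$ to be the component of $(f')^*\sW'$ whose leaves are tangent to $W'_{\sigma(i)} = {\rm d}\varphi(W_i)$. The content I need is: $\sH'$ is a component of ${\rm Hor}(g')$ (not of ${\rm Fib}(g')$), and moreover $\sH' \subset {\rm Fin}(g')$. For the first point, note $W'_{\sigma(i)} \neq W'_1$ because $\sigma$ is injective on the relevant indices and $W_i \neq W_1$, so $\sH' \neq {\rm Fib}(g')$, hence $\sH'$ is horizontal with respect to $g'$. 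For the second point I apply Proposition \ref{p.integrable} twice: since $\sH \subset {\rm Fin}(g)$, that proposition says the distribution $W_1 + W_i$ on ${\rm Reg}(y)$ is \emph{not} integrable; because ${\rm d}\varphi$ is a biholomorphism carrying $W_1 + W_i$ onto $W'_1 + W'_{\sigma(i)}$, integrability is preserved, so $W'_1 + W'_{\sigma(i)}$ is not integrable; applying Proposition \ref{p.integrable} in the primed setting then forces $\sH' \notin {\rm Inf}(g')$, i.e. $\sH' \subset {\rm Fin}(g')$. Thus the image under ${\rm d}\varphi$ of each ${\rm Fin}(g)$-direction at $y$ is a ${\rm Fin}(g')$-direction at $\varphi(y)$, which is exactly the assertion that $\varphi$ sends ${\rm Fin}(g)$ into ${\rm Fin}(g')$; and by the analyticity clause in Definition \ref{d.respect} it suffices to check this at one such $y$.

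The one technical point to handle carefully is that Proposition \ref{p.integrable} is stated for a single component $\sH$ of ${\rm Hor}(g)$ whose leaves are a specified member $W_2$ of the web-structure, whereas $\sH$ may contribute several distinct foliations $W_i$ to the web-structure on ${\rm Reg}(y)$, and similarly on the primed side; but this is harmless, since the proposition's conclusion depends only on whether $W_1 + W_i$ is integrable for the particular $W_i$ under consideration, and the argument above runs for each such $W_i$ separately. I also need that $\varphi$ genuinely matches ${\rm Hor}(g)$-leaves with ${\rm Hor}(g')$-leaves rather than possibly sending a ${\rm Hor}(g)$-leaf to the ${\rm Fib}(g')$-leaf — but this is excluded precisely by the hypothesis that $\varphi$ sends ${\rm Fib}(g)$ into ${\rm Fib}(g')$ together with the bijectivity of the index matching $\sigma$ on $\{1,\dots,d\}$ when $d = d'$ (and more generally by the fact that ${\rm d}\varphi$ is injective on $T({\rm Reg}(y))$, so distinct line subbundles have distinct images). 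No genuinely hard step remains; the whole proof is a transport-of-structure argument resting on Proposition \ref{p.integrable} and the preservation of integrability of distributions under biholomorphisms.
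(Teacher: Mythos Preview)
Your proposal is correct and follows essentially the same approach as the paper: localize, invoke Proposition \ref{p.integrable} to characterize the ${\rm Fin}(g)$-directions among the web foliations as precisely those $W_i$ for which $W_1 + W_i$ is non-integrable, and then transport this criterion across the biholomorphism $\varphi$ (which preserves integrability). The paper's version is terser and does not spell out the auxiliary points you raise (that ${\rm Fib}(g')$ is univalent so contributes a single foliation $W'_1$, and that injectivity of ${\rm d}\varphi$ ensures $W'_{\sigma(i)} \neq W'_1$ for $i \neq 1$), but these are exactly the small checks implicit in its argument.
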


\begin{proof}
Since the problem is local, we  may assume, by shrinking $U$ and $U'$,  that we have web-structures   $$W_1, \ldots, W_d \subset T(U) \mbox{ and } W'_1, \ldots, W'_d \subset T(U')$$ such that $W_1$ corresponds to ${\rm Fib}(g)$ (resp. $W'_1$ corresponds to ${\rm Fib}(g')$). By assumption, the differential ${\rm d} \varphi$ sends $W_1$ to $W'_1$ and $W_i$ to some $W'_j$.
  By Proposition \ref{p.integrable}, among $W_i $ (resp. $W'_i$), $1 \leq i \leq d$, those corresponding to ${\rm Fin}(g)$ (resp. ${\rm Fin}(g')$) are characterized by the property that
$W_1 + W_i$ (resp. $W'_1 + W'_i$) is not integrable. Since the integrability of such a distribution  is preserved by ${\rm d} \varphi$, we see that
$\varphi$ sends ${\rm Fin}(g)$ into $ {\rm Fin}(g')$. \end{proof}

\section{Bracket-generating webs}\label{s.B}

\begin{definition}\label{d.bracket} Let $U$ be a complex manifold and let $D \subset T(U)$ be a distribution (Pfaffian system) on $U$, i.e., a vector subbundle of the tangent bundle of $U$. By the holomorphic Frobenius theorem (applied to the Pfaffian systems defined on Zariski open subsets of $U$ generated by successive brackets of $D$),  there exists a Zariski open subset ${\rm dom}(D)\subset U$ and a holomorphic foliation, i.e., an integrable Pfaffian system, $${\rm Fol}^D \subset T({\rm dom}(D)),$$ called the {\em foliation generated by } $D$,  such that for a germ of complex submanifold $M \subset {\rm dom}(D)$ if $D_x \subset T_x(M) \subset T_x(U)$ for each $x \in M$, then the germ of the leaf of ${\rm Fol}^D$ through a point $x \in M$ is contained in $M$. We say that $D$ is {\em bracket-generating} if ${\rm Fol}^D = T({\rm dom}(D))$.  If, furthermore,  $U$ is a Zariski open subset in a projective variety $X$ and $D$ is an algebraic subbundle of $T(U)$, then ${\rm dom}(D)$ is a Zariski open subset in $X$. \end{definition}

\begin{definition}\label{d.B}
Given a web-structure $W_1, \ldots, W_d$ on a complex manifold $U,$ the linear span  $W_1 + \cdots + W_d$ gives a vector subbundle $D \subset T(U_o)$ on some Zariski open subset $U_o \subset U$. We say that the web-structure is {\em bracket-generating} if $D$ is bracket-generating.  Given a web $\sW$ of curves on a projective variety $X$, we have a Zariski open subset $X_o \subset X_{\rm reg}$ and an algebraic vector subbundle $D^{\sW} \subset T(X_o)$ spanned by the web-structures. Patching the data in Definition \ref{d.bracket}, we have a Zariski open subset ${\rm dom}(D^{\sW}) \subset X_o$ and a holomorphic foliation $${\rm Fol}^{\sW}:= {\rm Fol}^{D^{\sW}} \subset T( {\rm dom}(D^{\sW}))$$  generated by $D^{\sW}$. We say that $\sW$ is {\em bracket-generating}   if the web-structure in ${\rm Reg}(x)$ for each $x \in X_{\rm reg}$ in
Proposition \ref{p.regular} is bracket-generating. This is equivalent to saying that ${\rm Fol}^{\sW} = T({\rm dom}(D^{\sW}))$.
\end{definition}

\begin{definition}\label{d.sat}
Let $X$ be a projective variety with a web $\sW$ of curves.
For each $x \in X_{\rm reg}$, let  $I_x \subset X$ be the 1-dimensional closed algebraic subset defined by the  union of all members of $\sW$ passing through $x$. We can choose a Zariski open subset $T \subset {\rm dom}(D^{\sW}) \subset X_{\rm reg}$ such that for the incidence relation $I \subset T \times X$ defined by $$I = \{ (t,x), \ x \in I_t\},$$ the projection ${\rm pr}_T: I \to T$ is flat. For a projective subvariety $Z \subset X$, define $$I_Z := \mbox{ closure of } \bigcup_{s \in Z \cap T} I_s.$$ Note that $I_Z = \emptyset$ if $Z \cap T = \emptyset$ and  $I_Z = I_x$ when $Z$ is one point $x \in T$. Although $I_Z$ may not be irreducible,  each component of $I_Z$ contains $Z$ by the flatness of ${\rm pr}_T: I \to T$. Thus if $Z \cap T \neq \emptyset$, either $I_Z = Z$ or  every component of $I_Z$ has dimension equal to  $\dim Z+1$. We say that $Z$ is {\em saturated} (with respect to $\sW$) if $I_Z = Z$. \end{definition}

The following proposition shows that Definition \ref{d.B} can be interpreted in terms of algebro-geometric property of the web.

\begin{proposition}\label{p.sat}
In the setting of Definition \ref{d.sat}, for each $x \in T$, there exists a minimal saturated subvariety $S_x \subset X$ through $x$, in the sense that any saturated subvariety through $x$ contains $S_x$. Furthermore, the intersection $S_x \cap T$ is exactly the leaf of ${\rm Fol}^{\sW}|_T$ through $x$. It follows that $\sW$ is bracket-generating if and only if $X$ itself is the only saturated subvariety passing through a general point of $X$.
 \end{proposition}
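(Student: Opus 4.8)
The plan is to construct $S_x$ directly as the union of certain iterated $I_{(-)}$-operations and then identify it with a leaf of the foliation ${\rm Fol}^{\sW}$. First I would analyze the relationship between the operation $Z \mapsto I_Z$ and the distribution $D^{\sW}$ at the infinitesimal level: for $x \in T$, the curves through $x$ belonging to $\sW$ are smooth at $x$ (Proposition \ref{p.regular}) and their tangent lines span exactly $D^{\sW}_x$ by the definition of $D^{\sW}$. Hence for a subvariety $Z \subset X$ with $x \in Z \cap T$ a smooth point, one sees that $Z$ is saturated at $x$ (i.e. $I_Z$ agrees with $Z$ near $x$) precisely when $D^{\sW}_x \subset T_x(Z)$; more generally, the tangent space to $I_Z$ at a general point of a component through $x$ is obtained by adding the directions $D^{\sW}$ along the curves swept out. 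This is the bridge between the algebraic operation and the differential-geometric span.

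Next I would build $S_x$ iteratively. Starting from $Z_0 = \{x\}$, set $Z_{i+1} = I_{Z_i}$ (taking closures, intersecting with $T$ where needed). By the dimension dichotomy in Definition \ref{d.sat}, each step either stabilizes or increases dimension by exactly one, so the chain $Z_0 \subset Z_1 \subset \cdots$ stabilizes after at most $\dim X$ steps at some subvariety which I claim is $S_x$. That this $S_x$ is saturated is clear from stabilization; that it is contained in every saturated $Z \ni x$ follows by induction: if $Z \supset Z_i$ is saturated then $Z = I_Z \supset I_{Z_i} = Z_{i+1}$ (here I must be slightly careful to ensure $Z_i \cap T$ meets the flat locus and to handle components, but this is where the open set $T$ in Definition \ref{d.sat} was chosen for). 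So minimality is essentially formal once the construction is set up.

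Then comes the identification of $S_x \cap T$ with the leaf $L_x$ of ${\rm Fol}^{\sW}|_T$ through $x$. For the inclusion $L_x \subset S_x$: the leaf $L_x$, being the leaf of the foliation \emph{generated by} $D^{\sW}$, is by the defining property in Definition \ref{d.bracket} contained in any germ of submanifold $M$ with $D^{\sW}_x \subset T_x(M)$; since (by the first step) $S_x$ is such an $M$ at its general smooth points lying over $T$, a connectedness/irreducibility argument gives $L_x \subset S_x$. For the reverse inclusion I would argue that $S_x \cap T$, being swept out by curves tangent to $D^{\sW}$ starting from $x$, has tangent space contained in the tangent space of ${\rm Fol}^{\sW}$ at a general point, hence lies in a single leaf; since it contains $x$, that leaf is $L_x$. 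The final sentence is then immediate: $\sW$ is bracket-generating, i.e. ${\rm Fol}^{\sW} = T({\rm dom}(D^{\sW}))$, iff the leaf through a general point is open in $X$, iff $S_x = X$ for general $x$, iff $X$ is the only saturated subvariety through a general point.

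The main obstacle I anticipate is making the infinitesimal bridge in the first paragraph rigorous and globally coherent — specifically, controlling the tangent space of the (possibly reducible, possibly singular) variety $I_Z$ at points of the chosen flat locus $T$, and ensuring the iterated construction stays within the Zariski open sets where $D^{\sW}$, the flatness of ${\rm pr}_T\colon I \to T$, and the Frobenius-type foliation ${\rm Fol}^{\sW}$ are all defined. Once one knows that at a general point of each component of $I_Z$ through $x \in T$ the tangent space contains $D^{\sW}$ along the relevant curves, everything else follows from the dimension count in Definition \ref{d.sat} and the universal property of ${\rm Fol}^{D^{\sW}}$ in Definition \ref{d.bracket}; but pinning down that tangent-space statement, and the book-keeping of Zariski open subsets, is the technical heart.
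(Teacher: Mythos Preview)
Your proposal is correct and follows essentially the same approach as the paper's proof: iterate the operation $Z \mapsto I_Z$ starting from $\{x\}$, use the dimension dichotomy of Definition \ref{d.sat} to get stabilization, check minimality by induction, and identify $S_x \cap T$ with the leaf via the universal property of ${\rm Fol}^{D^{\sW}}$ in Definition \ref{d.bracket}. The only cosmetic difference is that the paper selects a single irreducible component of $I_{S^i_x}$ at each stage (so that $S_x$ is a priori a variety in the sense of Convention 2), whereas you take the full $I_{Z_i}$; your version recovers irreducibility a posteriori from the identification with the connected leaf, which is fine.
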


\begin{proof}
For each $x \in T$, we define a projective subvariety $S_x$ containing $x$ in the following manner. Let $S^1_x$ be a component of $I_x$ and inductively define $S_x^{i+1}$ to be a component of $I_{S^i_x}$.
Recall from Definition \ref{d.sat} that  for a projective variety $Z \subset X, Z \cap T \neq \emptyset$, either  $I_Z = Z$ or every component of $I_Z$ has dimension equal to  $\dim Z+1$. Thus $\dim S^{i+1}_x = \dim S^i_x +1$ or $S^i_x = S^{i+1}_x$. We conclude that $S^n_x = S^{n+1}_x = \cdots,$ where $n= \dim X$.  Define $S_x := S^{n}_x$.  This is  a saturated subvariety.  Note that  if $Z \subset X$ is saturated, then $I_{Z'} \subset Z$ for any subvariety $Z' \subset Z$. Thus if $x \in Z$, then $S^i_x \subset Z$ inductively for all $i$ and $S_x \subset Z$. This proves that $S_x$ is the minimal saturated subvariety through $x$.

Let  $S \subset T$ be a leaf of the foliation ${\rm Fol}^{\sW}|_T$, an immersed complex submanifold.
If $y \in S$, then $I_y \cap T \subset S$ because each irreducible component of $I_y$ is an integral curve of $D^{\sW}$.
Thus if a projective variety $Z \subset X$ satisfies $Z \cap T \subset S$, then $I_Z \cap T \subset S$.
Denoting by $S(x)$ the leaf of ${\rm Fol}^{\sW}|_T$ through a point $x \in T,$ we have  $S^i_x \cap T \subset S(x)$ inductively for all $i$. This implies that  $S_x \cap T \subset S(x)$ and $$\dim S_x \leq {\rm rank} {\rm Fol}^{\sW}.$$
On the other hand,
 if $Z \subset X$ is saturated,  then for each nonsingular point $x \in Z \cap T$, we have $D^{\sW}_x \subset T_x(Z)$. From Definition \ref{d.bracket}, we see that $S(x) \subset Z$. Applying this to the saturated variety $Z= S_y$ for $y \in T$,  we have $S(x) \subset S_y$ for each nonsingular point $x \in S_y$. As $\dim S_y \leq \dim S(x)$,  we conclude that $S_x \cap T$ is the leaf $S(x)$ of ${\rm Fol}^{\sW}|_T$ through $x$. \end{proof}

To exploit the bracket-generating property, we need the following geometric constructions.

\begin{definition}\label{d.construct}
Let $\sV$ be an irreducible web on a projective variety ${\bf X}$. To simplify the notation,  denote by $A:= {\rm Univ}_{\sV}$ the universal family and by
$f: A \to {\bf X}$ (resp. $g: A \to \sV$) the morphism $\mu_{\sV}$ (resp. $\rho_{\sV}$). Let $\sE$ be an irreducible web  on $A$. Let $\eta: B \to A$ be a generically submersive holomorphic map from a normal analytic variety $B$. Then we can construct the following objects.
\begin{itemize}
\item[(1)]  Let $$ \begin{array}{ccc} Z & \stackrel{\alpha}{\longrightarrow} & A \\
     \varrho \downarrow & & \downarrow g \\ B & \stackrel{g \circ \eta}{\longrightarrow} & \sV \end{array} $$ be the normalization of the unique irreducible component of the pull-back $(g \circ \eta)^* A$ dominant over $B$.
           There is a canonical section $\sigma: B \to Z$  induced by $\eta$.
\item[(2)] Let $$ \begin{array}{ccc} Y & \stackrel{\widetilde{\alpha}}{\longrightarrow} & {\rm Univ}_{\sE}\\
     \nu \downarrow & & \downarrow \mu_{\sE} \\ Z & \stackrel{\alpha}{\longrightarrow} & A \end{array} $$  be the normalization of an (not necessarily unique) irreducible component  of the pull-back $\alpha^*{\rm Univ}_{\sE}$ dominant over $Z$. \end{itemize}
     We call $$ \begin{array}{ccccc} Y & \stackrel{\nu}{\longrightarrow} & Z & \stackrel{\varrho}{\longrightarrow} & B \\
     \widetilde{\alpha} \downarrow & & \downarrow \alpha & & \downarrow g \circ \eta \\ {\rm Univ}_{\sE} & \stackrel{\mu_{\sE}}{\longrightarrow} & A & \stackrel{g}{\longrightarrow} & \sV. \end{array} $$  a $(\sV, \sE)$-{\em construction} on $\eta: B \to A$. It is not uniquely determined by $(\sV, \sE),$ because the choice of $\nu$ in (2) is not unique.

     We will apply this construction to $\eta: B \to A$ obtained in the following special way.
     Let ${\bf Y}$ be an analytic variety and let $\chi: {\bf  Y} \to {\bf X}$ be a generically submersive holomorphic map.
     Let $$ \begin{array}{ccc} B & \stackrel{\eta}{\longrightarrow} & A\\
     \zeta \downarrow & & \downarrow f \\ {\bf Y} & \stackrel{\chi}{\longrightarrow} & {\bf X}\end{array} $$ be the normalization of an irreducible component (not necessarily unique) of the pull-back $\chi^*A$ dominant over ${\bf Y}$.
    A $(\sV, \sE)$-construction on $\eta: B \to A$ arising this way, with the notation $\widetilde{\bf X} = {\rm Univ}_{\sE},
    \widetilde{\bf Y} = Y, \widetilde{\chi} = \widetilde{\alpha}$ and $h = \mu_{\sE},$
    $$ \begin{array}{ccccccc} \widetilde{\bf Y} & \stackrel{\nu}{\longrightarrow} & Z & \stackrel{\varrho}{\longrightarrow} & B & \stackrel{\zeta}{\longrightarrow} & {\bf Y}\\
     \widetilde{\chi} \downarrow & & \downarrow \alpha & & \downarrow g \circ \eta & & \downarrow \chi \\\widetilde{\bf X} & \stackrel{h}{\longrightarrow} & A & \stackrel{g}{\longrightarrow} & \sV & & {\bf X}, \end{array} $$ will be called a $(\sV, \sE)$-{\em tower on} $ \chi: {\bf Y} \to {\bf X}$.
     \end{definition}

\begin{definition}\label{d.tower}
Let $\sW$ be a web on  a projective variety $X_0$ of dimension $n$.
Let $Y_0 \subset X_0$ be a connected open subset and denote by $\chi_0: Y_0 \to X_0$ the inclusion.
We will define inductively \begin{itemize} \item[(i)] a collection of projective varieties $X_1, \ldots, X_n$ with a generically finite surjective morphism $\lambda_i: X_i \to X_0$ for each $1 \leq i \leq n$; \item[(ii)]  a collection of analytic varieties $Y_1, \ldots, Y_n$ with a projective morphism $\theta_i: Y_i \to Y_0$ of relative dimension $i$ for each $1 \leq i \leq n$; and
 \item[(iii)] a generically submersive holomorphic map $\chi_i: Y_i \to X_i$ for each $1 \leq i \leq n$ \end{itemize} in the following way.

Pick an irreducible component $\sV_1$ of $\sW$ with the universal family $\sV_1 \stackrel{g_1}{\leftarrow} A_1 \stackrel{f_1}{\to} X_0$ and an irreducible component $\sE_1$ of $f_1^* \sW$. Choose a $(\sV_1, \sE_1)$-tower on $\chi_0: Y_0 \to X_0,$ to be denoted by
 $$ \begin{array}{ccccccc} Y_1 & \stackrel{\nu_1}{\longrightarrow} & Z_1 & \stackrel{\varrho_1}{\longrightarrow} & B_1 & \stackrel{\zeta_1}{\longrightarrow} & Y_0\\
 \chi_1 \downarrow & & \downarrow \alpha_1 & & \downarrow g_1 \circ \eta_1 & & \downarrow \chi_0\\X_1 & \stackrel{h_1}{\longrightarrow} & A_1 & \stackrel{g_1}{\longrightarrow} & \sV_1 & & X_0. \end{array} $$
  Denote by
       $\theta_1: Y_1 \to Y_0$ the composition $$ Y_1 \stackrel{\nu_1}{\longrightarrow} Z_1 \stackrel{\varrho_1}{\longrightarrow} B_1 \stackrel{\zeta_1}{\longrightarrow} Y_0.$$
       Then $\theta_1$ is a projective morphism with relative dimension 1. Denote by
       $\lambda_1: X_1 \to X_0$ the composition $$X_1 \stackrel{h_1}{\longrightarrow} A_1 \stackrel{f_1}{\longrightarrow} X_0.$$ Then $\lambda_1$ is
       a generically finite morphism between two projective varieties.

      To use an induction, assume that we have defined $\lambda_{i-1}: X_{i-1} \to X_0$, $\theta_{i-1}: Y_{i-1} \to Y_0$ and $\chi_{i-1}: Y_{i-1}\to X_{i-1}$ for $1 < i \leq n$.
      Pick an irreducible component $\sV_{i}$ of $\lambda_{i-1}^* \sW$
      with the universal family $\sV_{i} \stackrel{g_i}{\leftarrow} A_i \stackrel{f_i}{\to} X_{i-1}$ and an irreducible component $\sE_i$ of $(\lambda_{i-1} \circ f_i)^* \sW$. Choose a $(\sV_i, \sE_i)$-tower on $\chi_{i-1}: Y_{i-1} \to X_{i-1}$, to be denoted by
 $$ \begin{array}{ccccccc} Y_i & \stackrel{\nu_i}{\longrightarrow} & Z_i & \stackrel{\varrho_i}{\longrightarrow} & B_i & \stackrel{\zeta_i}{\longrightarrow} & Y_{i-1}\\
 \chi_i \downarrow & & \downarrow \alpha_i & & \downarrow g_i \circ \eta_i & & \downarrow \chi_{i-1}\\X_i & \stackrel{h_i}{\longrightarrow} & A_i & \stackrel{g_i}{\longrightarrow} & \sV_i & & X_{i-1}. \end{array} $$
  Denote by
       $\theta_i: Y_i \to Y_0$ the composition $$ Y_i \stackrel{\nu_i}{\longrightarrow} Z_i \stackrel{\varrho_i}{\longrightarrow} B_i \stackrel{\zeta_i}{\longrightarrow} Y_{i-1} \stackrel{\theta_{i-1}}{\longrightarrow} Y_0.$$
       Then $\theta_i$ is a projective morphism with relative dimension $i$. Denote by
       $\lambda_i: X_i \to X_0$ the composition $$X_i \stackrel{h_i}{\longrightarrow} A_i \stackrel{f_i}{\longrightarrow} X_{i-1}
       \stackrel{\lambda_{i-1}}{\longrightarrow} X_0.$$ Then $\lambda_i$ is
       a generically finite morphism between two projective varieties.

       The above inductive construction of objects (i), (ii), (iii)  depends  on the choice of $\sV_i, \sE_i$ and a choice of
       a $(\sV_i, \sE_i)$-tower at each step. We will call this collection of objects {\em a tower} on $Y_0 \subset X_0$ {\em constructed via}  $(\sV_1,  \ldots, \sV_n)$ and $(\sE_1,  \ldots, \sE_n)$.
        \end{definition}

A tower is equipped with the following special subsets $D_i \subset Y_i, 1\leq i \leq n$, which may be viewed as
the `diagonals' of the construction.

\begin{lemma}\label{l.diagonal}
For a tower on $Y_0 \subset X_0$ constructed in Definition \ref{d.tower}, define inductively a closed analytic subset
$D_i \subset Y_i$ for each $1 \leq i \leq n$ by setting
 $D_1 := \nu_1^{-1}(\sigma_1(B_1))$ and
$$D_i := \nu_i^{-1}\left( \sigma_i (\zeta_i^{-1}(D_{i-1})) \right) \mbox{ for
    each } 1 < i \leq n, $$
where $\sigma_i: B_i \to Z_i$ is the natural section of $\varrho_i$ induced by $\eta_i: B_i \to A_i$. Let $\xi_i: Y_i \to X_0$ be the composition $\lambda_i \circ \chi_i$. Then
for each $1 \leq i \leq n$, we have \begin{itemize} \item[(1)] $\theta_i(D_i) = Y_0$ and
\item[(2)] $\theta_i(y_i) = \xi_i (y_i)$ for any $y_i \in D_i$. \end{itemize} \end{lemma}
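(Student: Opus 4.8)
The plan is to prove both assertions by induction on $i$, unwinding the recursive definitions of $D_i$, $\theta_i$ and $\xi_i$. The base case $i=1$ should follow directly from the construction of a $(\sV_1,\sE_1)$-tower: since $D_1 = \nu_1^{-1}(\sigma_1(B_1))$, a point $y_1 \in D_1$ maps under $\nu_1$ to a point of the canonical section $\sigma_1(B_1) \subset Z_1$, which in turn maps isomorphically to $B_1$ under $\varrho_1$; composing with $\zeta_1: B_1 \to Y_0$ gives $\theta_1(D_1) = Y_0$, proving (1). For (2), I would trace through the defining diagram of a $(\sV,\sE)$-construction in Definition \ref{d.construct}: the section $\sigma: B \to Z$ is the one induced by $\eta: B \to A$, so on the section, $\alpha \circ \sigma = \eta$, and over the section the fiber of $\widetilde\alpha = \widetilde\chi$ reproduces the incidence point coming from $\eta$ itself; this should force $\xi_1(y_1) = \lambda_1(\chi_1(y_1)) = f_1(\eta_1(\varrho_1(\nu_1(y_1))))= \chi_0(\zeta_1(\varrho_1(\nu_1(y_1)))) = \theta_1(y_1)$, using the commutativity $f_1 \circ \eta_1 = \chi_0 \circ \zeta_1$ from the bottom square of the tower.

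For the inductive step, assume (1) and (2) hold for $i-1$. Given $y_i \in D_i = \nu_i^{-1}(\sigma_i(\zeta_i^{-1}(D_{i-1})))$, write $z_i = \nu_i(y_i) \in \sigma_i(\zeta_i^{-1}(D_{i-1}))$ and $b_i = \varrho_i(z_i) \in \zeta_i^{-1}(D_{i-1})$, so $y_{i-1} := \zeta_i(b_i) \in D_{i-1}$ and $z_i = \sigma_i(b_i)$. For (1), $\theta_i(y_i) = \theta_{i-1}(\zeta_i(\varrho_i(\nu_i(y_i)))) = \theta_{i-1}(y_{i-1})$, and since $y_{i-1}$ ranges over all of $D_{i-1}$ as $y_i$ ranges over $D_i$ (surjectivity of $\zeta_i$ onto $Y_{i-1}$ restricted suitably, together with the section $\sigma_i$), the inductive hypothesis $\theta_{i-1}(D_{i-1}) = Y_0$ gives $\theta_i(D_i) = Y_0$. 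For (2), I would compute $\xi_i(y_i) = \lambda_i(\chi_i(y_i))$. Expanding $\lambda_i = \lambda_{i-1} \circ f_i \circ h_i$ and $\chi_i(y_i) = h_i^{-1}$-side data from $\widetilde\chi_i = \widetilde\alpha_i$: on the diagonal, $h_i(\chi_i(y_i)) = \alpha_i(z_i) = \alpha_i(\sigma_i(b_i)) = \eta_i(b_i)$, using that $\sigma_i$ is the section induced by $\eta_i$ (so $\alpha_i \circ \sigma_i = \eta_i$). Then $f_i(\eta_i(b_i)) = \chi_{i-1}(\zeta_i(b_i)) = \chi_{i-1}(y_{i-1})$ by the bottom-right square of the $i$-th tower, so $\xi_i(y_i) = \lambda_{i-1}(\chi_{i-1}(y_{i-1})) = \xi_{i-1}(y_{i-1})$, which equals $\theta_{i-1}(y_{i-1})$ by the inductive hypothesis for (2), and this in turn equals $\theta_i(y_i)$ by the computation in the proof of (1).

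The main obstacle I anticipate is bookkeeping: making sure that the various normalizations and choices of dominant irreducible components in Definition \ref{d.construct} do not disturb the identity $\alpha \circ \sigma = \eta$ defining the section, and that the canonical section $\sigma_i: B_i \to Z_i$ genuinely satisfies $\widetilde\chi_i \circ (\text{lift of } \sigma_i) = $ the expected incidence point — i.e., that passing to $Y_i = $ the normalized pullback component over $Z_i$ does not lose the diagonal point. Concretely, one must check that the point $\eta_i(b_i) \in A_i$, viewed as a point of the curve $g_i^{-1}(g_i(\eta_i(b_i)))$ parametrized over $B_i$, lies in the image of $\widetilde\alpha_i$ and is hit by the lifted section, so that $\chi_i$ is well-defined and submersive there; this is exactly the content of "there is a canonical section $\sigma: B \to Z$ induced by $\eta$" in Definition \ref{d.construct}(1), and the role of the diagonal $D_i$ is to keep track of it. Once one accepts these structural facts about the $(\sV,\sE)$-construction, the proof is a routine diagram chase; I would present it as such, writing out the two nested commuting squares at step $i$ explicitly and citing the generic submersivity only to ensure the relevant maps are defined on a dense set, which suffices since $D_i$ is a closed analytic subset and the identities are closed conditions.
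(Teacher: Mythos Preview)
Your proposal is correct and follows essentially the same inductive diagram chase as the paper's own proof: both argue by induction on $i$, use the identities $h_i \circ \chi_i = \alpha_i \circ \nu_i$, $\alpha_i \circ \sigma_i = \eta_i$, and $f_i \circ \eta_i = \chi_{i-1} \circ \zeta_i$ to reduce $\xi_i(y_i)$ to $\xi_{i-1}(y_{i-1})$ and $\theta_i(y_i)$ to $\theta_{i-1}(y_{i-1})$, and invoke the surjectivity of $\nu_i$ and $\zeta_i$ (guaranteed by the dominance hypotheses in Definition~\ref{d.construct}) for part (1). The bookkeeping worries you raise about normalizations do not arise, since the identity $\alpha \circ \sigma = \eta$ is built into the definition of the canonical section and the paper simply uses it without further comment.
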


\begin{proof}
It is clear that $\theta_1(D_1) = Y_0$. Let us assume that $\theta_{i-1}(D_{i-1}) = Y_0$ for
$1 < i \leq n$.
Since $$
\varrho_i (\nu_i(D_i)) = \varrho_i \left( \sigma_i(\zeta_i^{-1}(D_{i-1})) \right) = \zeta_i^{-1}(D_{i-1}),$$ we have $$\theta_i(D_i)  = \theta_{i-1} \left( \zeta_i ( \varrho_i (\nu_i (D_i))) \right) = \theta_{i-1}( D_{i-1}) = Y_0.$$ This proves (1).

To prove (2), note that for any $y_i \in D_i, 1 \leq i \leq n,$ we have elements
$b_i \in B_i$ and $y_{i-1} \in D_{i-1}$ (with the convention $D_0 = Y_0$) such that
 $$\nu_i(y_i) = \sigma_i (b_i) \mbox{ and } \zeta_i(b_i) = y_{i-1}.$$
 Then $$\theta_1(y_1) = \zeta_1 \left( \varrho_1 ( \nu_1(y_1)) \right) = \zeta_1(b_1) = y_0,$$ while \begin{eqnarray*} \xi_1(y_1) &=& \lambda_1 \left( \chi_1 (y_1) \right) \\
  &=& (f_1 \circ h_1) \circ \chi_1 (y_1) \\ &=& f_1 \left( \alpha_1 \circ \nu_1 (y_1) \right) \\ &=& f_1 (\eta_1(b_1)) \\ &=& \chi_0(\zeta_1 (b_1)) = y_0, \end{eqnarray*}
  checking (2) for $i=1$. Now assume $\theta_{i-1}(y_{i-1}) = \xi_{i-1} (y_{i-1})$ for any $y_{i-1} \in D_{i-1}$. Then $$\theta_i(y_i) = \theta_{i-1} \left( \zeta_i ( \varrho_i ( \nu_i(y_i))) \right) = \theta_{i-1}(\zeta_i(b_i)) = \theta_{i-1}(y_{i-1}),$$ while
  \begin{eqnarray*} \xi_i(y_i) & = & \lambda_i \circ \chi_i (y_i) \\ &=& \lambda_{i-1} \circ (f_i \circ h_i) \circ \chi_i (y_i) \\ & =& \lambda_{i-1} \left( f_i ( \alpha_i (\nu_i  (y_i) ))\right) \\ & =& \lambda_{i-1} \left( f_i (\eta_i(b_i)) \right) \\&=&
  \lambda_{i-1} \left( \chi_{i-1} (\zeta_i (b_i)) \right) \\ &=&
  \lambda_{i-1} \left( \chi_{i-1} (y_{i-1}) \right) = \xi_{i-1} (y_{i-1}).\end{eqnarray*}
   Thus (2) holds by induction.
\end{proof}

The following is a key property of bracket-generating webs.

\begin{proposition}\label{p.tower}
Let $\sW$ be a bracket-generating web on  a projective variety $X_0$ of dimension $n$ and let $Y_0 \subset X_0$ be a connected open subset.
Then we can choose $(\sV_1, \ldots, \sV_n)$  in Definition \ref{d.tower}   such that for a tower on $Y_0 \subset X_0$ constructed   via $(\sV_1, \ldots, \sV_n)$ and $(\sE_1, \ldots, \sE_n),$ under any choice of $(\sE_1, \ldots, \sE_n)$,
   the morphism
$\xi_i= \lambda_i \circ \chi_i: Y_i \to X_0$
sends each irreducible
component of $\theta_i^{-1}(y)$ for a general $y \in Y_0$ to a projective subvariety of dimension $i$ in $X$. In particular, the restriction of $\xi_n$ on any irreducible component of $\theta_n^{-1}(y)$
is a generically finite morphism surjective over $X_0$.
\end{proposition}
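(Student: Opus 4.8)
\medskip

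The plan is to prove, by induction on $i$, the statement that for a suitable choice of $(\sV_1,\dots,\sV_i)$ and under any choice of $(\sE_1,\dots,\sE_i)$ a general $y\in Y_0$ satisfies $\dim\xi_i(\theta_i^{-1}(y))=i$; the per-component refinement and the final assertion will be recovered at the end. The case $i=0$ is trivial, as $\theta_0$ and $\xi_0$ are the inclusion $Y_0\subset X_0$. For the inductive step I would first read off from Definition \ref{d.construct} the factorizations $\xi_i=\lambda_{i-1}\circ f_i\circ\alpha_i\circ\nu_i$ and $\theta_i=\theta_{i-1}\circ\zeta_i\circ\varrho_i\circ\nu_i$, so that both maps factor through $\nu_i$ and the choice of $\sE_i$ is irrelevant for the dimension count. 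Unwinding the fibre products defining $Z_i$ and $B_i$, a point $b$ of $B_i$ is a pair $(t,C)$ with $t\in Y_{i-1}$ and $C$ a member of $\sV_i$ through $\chi_{i-1}(t)$, and $\lambda_{i-1}(f_i(\alpha_i(\varrho_i^{-1}(b))))=\lambda_{i-1}(C)$, a member of $\sW$ on $X_0$. Writing $\Sigma_j(y):=\xi_j(\theta_j^{-1}(y))$ (a projective subvariety of $X_0$, since $\theta_j$ is projective), it follows that
\[
\Sigma_i(y)\;=\;\bigcup_{t\in\theta_{i-1}^{-1}(y)}\ \bigcup_{\,C\in\sV_i,\ \chi_{i-1}(t)\in C}\ \lambda_{i-1}(C),
\]
a finite union of members of $\sW$, each containing the point $\xi_{i-1}(t)=\lambda_{i-1}(\chi_{i-1}(t))$ of $\Sigma_{i-1}(y)$; hence $\Sigma_{i-1}(y)\subseteq\Sigma_i(y)$. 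As the general fibre of $\theta_i$ has dimension $\dim Y_i-\dim Y_0=i$, also $\dim\Sigma_i(y)\le i$, and by induction $\dim\Sigma_{i-1}(y)=i-1$.

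It remains, for $i\le n$, to choose $\sV_i$ forcing $\dim\Sigma_i(y)\ge i$. By Lemma \ref{l.diagonal}, applied to the diagonal $D_{i-1}$ — which maps onto $Y_0$ under $\theta_{i-1}$ with $\xi_{i-1}=\theta_{i-1}$ on it — the general point $y$ lies in $\Sigma_{i-1}(y)$, in particular $\Sigma_{i-1}(y)$ meets the good locus $T$ of Definition \ref{d.sat}. Since $\dim\Sigma_{i-1}(y)=i-1<n$ and $\sW$ is bracket-generating, Proposition \ref{p.sat} shows $\Sigma_{i-1}(y)$ is not saturated (otherwise it would contain the minimal saturated subvariety $S_y=X_0$). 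By Definition \ref{d.sat} the set $I_{\Sigma_{i-1}(y)}$ is then of pure dimension $i$; fix an $i$-dimensional component $\Xi$ of it. Now $\Xi$ is swept out by members of $\sW$ through points of $\Sigma_{i-1}(y)$, and $\sW$ has finitely many irreducible components, so — via a constructibility argument making the choice uniform for general $y$ — one can fix a single component $\sV$ of $\sW$ whose members through $\Sigma_{i-1}(y)$ sweep out an $i$-dimensional subset of $\Xi$. Because $\lambda_{i-1}$ is generically \'etale and $\Sigma_{i-1}(y)$ is not contained in its branch locus (it contains the general point $y$), such members of $\sV$ through general points $p\in\Sigma_{i-1}(y)$ lift, through points of $\lambda_{i-1}^{-1}(p)$ lying on $\chi_{i-1}(\theta_{i-1}^{-1}(y))$, to members of $\lambda_{i-1}^*\sW$ mapping onto them. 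Taking $\sV_i$ to be the irreducible component of $\lambda_{i-1}^*\sW$ containing these lifts, the displayed formula gives $\Sigma_i(y)\supseteq\Xi$, hence $\dim\Sigma_i(y)=i$, completing the induction.

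To pass from $\dim\Sigma_i(y)=i$ to the per-component statement, I would use that $Y_i$ is irreducible — it is constructed as the normalization of an irreducible component at every stage of Definitions \ref{d.construct} and \ref{d.tower} — so that the monodromy of $\theta_i$ permutes the irreducible components of a general fibre $\theta_i^{-1}(y)$ transitively. As the image dimension of a component is locally constant in this family, it takes the same value $d$ on every component of $\theta_i^{-1}(y)$; since finitely many subvarieties of dimension $d$ cover $\Sigma_i(y)$, which has dimension $i$, we get $d=i$. Finally, for $i=n$: a component $E$ of $\theta_n^{-1}(y)$ has $n$-dimensional image in the irreducible $n$-dimensional variety $X_0$, so $\xi_n(E)=X_0$; moreover $n=\dim\xi_n(E)\le\dim E\le\dim\theta_n^{-1}(y)=n$, so $\dim E=n$, and $\xi_n|_E$ is a surjective morphism between irreducible $n$-folds, hence generically finite.

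The step I expect to be the main obstacle is the choice of $\sV_i$ in the second paragraph. It must be fixed before $y$ is specified, yet function for a general $y$, and — more delicately — it must force $\Sigma_i(y)\supseteq\Xi$ \emph{robustly}: the conclusion has to hold for any tower built from $(\sV_1,\dots,\sV_n)$, so irrespective not only of the $\sE_i$ but also of the irreducible components selected when forming $B_i$ and $Y_i$; concretely, whichever component $B_i$ of the fibre product is picked, it must still contain the escaping lifts. Pinning this down calls for an analysis of how the (reducible) family $\lambda_{i-1}^*\sW$ on $X_{i-1}$ lies over $\sW$ on $X_0$ — in particular the splitting into ${\rm Bir}(\lambda_{i-1}^*\sW)$ and ${\rm Mult}(\lambda_{i-1}^*\sW)$ of Definition \ref{d.pushpull} and the univalence behaviour of the relevant subfamily — together with the constructibility argument for uniformity in $y$ and a check that the subvarieties in play meet $X_{i-1}$ away from the ramification locus of $\lambda_{i-1}$. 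The remaining ingredients — the equidimensionality facts giving $\dim\theta_i^{-1}(y)=i$ for general $y$ (immediate since $\zeta_i,\nu_i$ are generically finite and $\varrho_i$ has $1$-dimensional fibres) and the transitivity of the monodromy on components of a general fibre of a morphism from an irreducible variety — are routine.
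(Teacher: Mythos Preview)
Your argument follows the same inductive skeleton as the paper's and invokes Proposition~\ref{p.sat} at the key step in the same spirit; the per-component conclusion via irreducibility of $Y_i$ and monodromy is likewise the same. The one substantive difference is \emph{where} the non-saturation argument is run. You work downstairs in $X_0$: you show $\Sigma_{i-1}(y)\subset X_0$ is not saturated for $\sW$, pick an escaping component $\sV\subset\sW$, and then lift its members through $\lambda_{i-1}$ to produce a component $\sV_i\subset\lambda_{i-1}^*\sW$. The paper works upstairs: since bracket-generation is a local property of the web-structure, it is inherited by $\lambda_i^*\sW$ on $X_i$, so one applies Proposition~\ref{p.sat} directly to the image $G_i:=\chi_i(\text{a component of }\theta_i^{-1}(y))\subset X_i$ with respect to $\lambda_i^*\sW$, and reads off the escaping component $\sV_{i+1}\subset\lambda_i^*\sW$ immediately, with no lifting step.

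This is precisely what dissolves your ``main obstacle''. Working in $X_i$, the escaping component is already a subweb of $\lambda_i^*\sW$, ready to feed into the next tower step; the questions you raise --- which component of the pull-back the lifts land in, whether the chosen $B_i$ catches them, ramification of $\lambda_{i-1}$ --- simply do not arise. Your route can be completed with the extra bookkeeping you outline, but the paper's choice to apply Proposition~\ref{p.sat} upstairs is the cleaner organization and you may want to adopt it. One small correction along the way: your displayed formula for $\Sigma_i(y)$ is in general only an inclusion $\subseteq$, since $B_i$ is a single irreducible component of the fibre product and need not contain every pair $(t,C)$; this is harmless for the upper bound $\dim\Sigma_i(y)\le i$, but it is exactly the reason the escape step requires the care you flag.
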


  \begin{proof} We will choose  $(\sV_1, \ldots, \sV_n)$ inductively such that for each $i$ and a general $y \in Y_0$, $$\xi_i|_{\theta_i^{-1}(y)}: \theta_i^{-1}(y)  \to X_0$$ sends each irreducible
component of $\theta_i^{-1}(y)$ to a variety of dimension $i$.
  It is clear that we may choose $\sV_1$ as any irreducible component of $\sW$, to make this work for $i=1$.
Assuming that we have chosen $\sV_1, \ldots, \sV_i, i < n,$ satisfying the requirement, we will choose $\sV_{i+1}$ as follows.

Let $G_i \subset X_i$ be the image of an irreducible component of $\theta_i^{-1}(y)$  under $\chi_i$. Our  assumption is  $\dim G_i = i < n.$  Let $T_i \subset  X_i$ be the Zariski open subset determined by the web  $\lambda_i^*\sW$ as in  Definition  \ref{d.sat} (by substituting  $X_i$ for $X$ and $\lambda_i^* \sW$ for $\sW$).
From the generic submersiveness of $\chi_i$, the projective variety $G_i$ intersects $T_i$. Since  $\lambda_i^*\sW$ is bracket-generating on $X_i$,
  Proposition \ref{p.sat} says that $G_i$ is not saturated.   Thus we can choose an irreducible component $\sV_{i+1}$ of $\lambda_i^*\sW$ and an irreducible component $J_{i+1}$ of $f_{i+1}^{-1}(G_i)$ which is dominant over $G_i$ such that
$$ \dim g_{i+1}(J_{i+1}) = \dim J_{i+1} = i $$  and,  denoting by $\widetilde{J}_{i+1}$ the irreducible component of $ g^{-1}_{i+1} \left( g_{i+1} (J_{i+1}) \right) $ dominant over $g_{i+1}(J_{i+1}),$
$$ \dim \widetilde{J}_{i+1} = \dim f_{i+1} (\widetilde{J}_{i+1}) = i+1.$$ Choose any component $\sE_{i+1}$ of $(\lambda_i \circ f_{i+1})^*\sW$.
When $\chi_{i+1}: Y_{i+1} \to X_{i+1}$ is a $(\sV_{i+1}, \sE_{i+1})$-tower on  $\chi_i: Y_i \to X_i$,  the image of $B_{i+1}$ in $A_{i+1}$ contains $J_{i+1}$. Then  $\chi_{i+1}(\theta_{i+1}^{-1}(y))$  must have dimension $i+1$ because it contains $f_{i+1}(\widetilde{J}_{j+1})$, which has dimension $i+1$.
 Thus some component of $\theta_{i+1}^{-1}(y)$ is sent by $\xi_{i+1}= \lambda_{i+1} \circ \chi_{i+1}$ to a variety of dimension $i+1$. Since $Y_i$ is irreducible and $y$ is general, this holds for every component of
 $\theta_{i+1}^{-1}(y)$, completing the proof by induction. \end{proof}

\section{Proof of Theorem \ref{t.main}}\label{s.proof}

For the proof of Theorem \ref{t.main}, it is convenient to introduce the following terms.

\begin{definition}\label{d.mm}
 Let $ Q, R, S$  be three projective varieties of the same dimension.

\begin{itemize}
\item[(1)]  A closed algebraic subset $\Gamma \subset S \times Q$ is called a {\em generically finite algebraic correspondence} from $S$ to $Q$ if  the projections ${\rm pr}_S: \Gamma \to S$ and ${\rm pr}_Q: \Gamma \to Q$ are generically finite on every component of $\Gamma$.
We will denote by $\sC(S, Q)$ the set of generically finite algebraic correspondences from $S$ to $Q$.
For convenience, for an element  $\gamma \in \sC(S,Q)$,  we will call the corresponding  $\Gamma \subset S \times Q$  the {\em graph} of $\gamma$ and write $\Gamma = {\rm Graph}(\gamma).$ By symmetry, an element $\gamma \in \sC(S, Q)$ can be viewed as an element of $\sC(Q, S)$, to be denoted by $\gamma^{-1}$.
\item[(2)] Given an element $\gamma \in \sC(S,Q)$ and a general point $x \in S$, by composing ${\rm pr}_Q$ with the  inverse images of ${\rm pr}_S|_{{\rm Graph}(\gamma)},$ we obtain a finite number  of biholomorphic maps defined  on a neighborhood $O$    of $x,$
           $$\gamma_x^i: O \to \gamma_x^i(O) \subset Q, \ 1 \leq i \leq m,$$ where $m= m(\gamma)$ is a positive integer depending on $\gamma$.  The collection $\{ \gamma_x^1, \ldots, \gamma_x^m\}$ will be called the {\em germs} of $\gamma$ at $x$.
    \item[(3)] Given $\gamma \in \sC(R, S)$ and $\beta \in \sC(S, Q)$, we denote by
    $\beta \circ \gamma \in \sC(R, Q)$ the unique element canonically determined by the property that the germs of $\beta \circ \gamma$ at a general point $x \in R$ are the compositions of
    the germs $\{ \gamma_x^1, \ldots, \gamma_x^m \}$ of $\gamma$ at $x$ and the germs of $\beta$ at the points $\gamma_x^1(x), \ldots, \gamma_x^m(x)$.
    \item[(4)] Let $U$ be an analytic variety.  Let $\varphi: U \dasharrow S$ and $\psi: U \dasharrow Q$ be
     two generically submersive meromorphic maps. For an element  $\gamma \in \sC(S, Q),$ we write $$\psi \sim \gamma \circ \varphi \mbox{  (or }
    \varphi \sim \gamma^{-1} \circ \psi), $$  if  for a general point $y \in U$, the germ $\varphi_y$ of $\varphi$ at $y$ and the germ $\psi_y$ of $\psi$ at $y$ satisfy $$\psi_y = \gamma_{\varphi(y)}^1 \circ \varphi_y$$ for some germ $\gamma_{\varphi(y)}^1$ of $\gamma$ at $\varphi(y)$.
    In this case, for any generically submersive holomorphic map  $h: V \to U$ from an analytic variety $V$, if we write $h^*\psi= \psi \circ h$ and $h^* \varphi = \varphi \circ h$, then
    it is clear that $h^* \psi \sim \gamma \circ h^* \varphi$, which may be written as
    $$h^* (\gamma \circ \varphi) \ \sim \ \gamma \circ h^* \varphi.$$
    If $\varphi$ happens to be an inclusion of $U$ as an open subset $\varphi: U \subset S$,
    then $\psi \sim \gamma \circ \varphi$ implies that  ${\rm Graph}(\psi) \subset U \times Q \subset S \times Q$ is contained in the closed algebraic subset ${\rm Graph}(\gamma) \subset S \times Q$.
    \item[(5)] For an analytic variety $U$ and meromorphic maps
    $\varphi_1: U \dasharrow R, \varphi_2: U \dasharrow S$ and $\varphi_3: U \dasharrow Q$, if we have $\gamma \in \sC(R, S)$ and $\beta \in \sC(S, Q)$ such that
    $\varphi_2 \sim \gamma \circ \varphi_1$ and $\varphi_3 \sim \beta \circ \varphi_2$, then it is clear that $$\varphi_3 \ \sim \ (\beta \circ \gamma) \circ \varphi_1.$$ \item[(6)] Let $\sW$ be a web of curves on $S$. The {\em push-forward} of $\sW$ by $\gamma \in \sC(S, Q),$ to be denoted by $\gamma_*\sW$, is the web
     $$\gamma_* \sW := ({\rm pr}_Q|_{\Gamma})_*( {\rm pr}_S|_{\Gamma})^* \sW$$ where $\Gamma = {\rm Graph}(\gamma).$ Here, we are applying the pull-back and the push-forward  to all irreducible components of $\Gamma$ and taking the union of all the resulting webs.  Note that $\sW$ is a subweb of, but not necessarily the same web as,  the web $\gamma^{-1}_*(\gamma_* \sW).$
\end{itemize}
\end{definition}

\begin{definition}\label{d.descend}
Let $\alpha: Z \to A$ be a submersive holomorphic map between analytic varieties. Let $U \subset Z$ be an open subset and $\varphi: U \dasharrow Q$ be a meromorphic map to
an analytic variety $Q$. We say that $\varphi$ is $\alpha$-{\em descending} if for each $u \in U$ there exists a germ  $\varphi_{\alpha(u)}$ of meromorphic maps from
$A$ to $Q$ at the point $\alpha(u) \in A$ such that $\varphi_{\alpha(u)} \circ \alpha$ coincides with the germ of $\varphi$ at $u$.
If furthermore, $\dim A = \dim Q$ and there are webs of curves $\sW$ on $A$ and $\sU$ on $Q$, we say that $\varphi$ {\em sends} $\sW$ into $\sU$ if $\varphi_{\alpha(u)}$ sends $\sW$ into $\sU$ at some (hence any) $u \in U$ in the sense of Definition \ref{d.respect}.
\end{definition}

     \begin{lemma}\label{l.lift}
     Let $\varrho: Z \to B$ be a proper holomorphic map between normal analytic varieties equipped with a section $\sigma: B \to Z$. Let $\alpha: Z \to A$ and $\eta: B \to A$ be generically submersive holomorphic maps to an analytic variety $A$,  satisfying
     $\eta = \alpha \circ \sigma$. Let $Q$ be an analytic variety and $ \varphi: B \dasharrow Q $ be an $\eta$-descending meromorphic map. Then there exists a neighborhood
     $N(B) \subset Z$ of $\sigma(B)$ and an $\alpha$-descending meromorphic map $\varphi^{\sigma}: N(B) \dasharrow Q$ such that $\varphi= \varphi^{\sigma} \circ \sigma.$ \end{lemma}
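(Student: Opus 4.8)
The plan is to transport the descent datum of $\varphi$ from the fibres of $\varrho$ onto the fibres of $\alpha$, using the section $\sigma$ and the relation $\eta=\alpha\circ\sigma$; this is essentially the only natural construction available, and the work is to check that it is well defined and that it spreads out to a full neighbourhood of $\sigma(B)$. For the local construction, let $B^{\circ}\subset B$ be the complement of the proper analytic subset where $\eta$ fails to be submersive or $\varphi$ fails to be holomorphic, and let $Z^{\circ}\subset Z$ be the open subset where $\alpha$ is submersive; both are dense and connected, being complements of proper analytic subsets of irreducible complex spaces. Fix $b\in B^{\circ}$. By the $\eta$-descending hypothesis (Definition~\ref{d.descend}) there is a germ $\psi_{b}$ of meromorphic map $A\dasharrow Q$ at $\eta(b)$ with $\psi_{b}\circ\eta=\varphi$ near $b$; since $\eta$ is a submersion near $b$ and $\varphi$ is holomorphic at $b$, the germ $\psi_{b}$ is holomorphic at $\eta(b)$, and $\varphi$ is constant along the local fibre of $\eta$ through $b$. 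I then set $\varphi^{\sigma}:=\psi_{b}\circ\alpha$ near $\sigma(b)$. By construction this germ is $\alpha$-descending, and $\varphi^{\sigma}\circ\sigma=\psi_{b}\circ\alpha\circ\sigma=\psi_{b}\circ\eta=\varphi$ near $b$.

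Next I patch these germs. If $b'\in B^{\circ}$ is close to $b$, then $\psi_{b}\circ\eta=\varphi=\psi_{b'}\circ\eta$ near $b'$, and since $\eta$ is open near $b'$ this forces $\psi_{b}=\psi_{b'}$ as germs near $\eta(b')$. Hence $b\mapsto\psi_{b}$ is locally constant, so by connectedness of $B^{\circ}$ the germs $\psi_{b}$ fit together, after shrinking if necessary, into a single holomorphic map $\psi\colon\Omega\to Q$ on a Euclidean neighbourhood $\Omega\subset A$ of $\eta(B^{\circ})$. Then $\varphi^{\sigma}:=\psi\circ\alpha$ is a well-defined $\alpha$-descending holomorphic map on the open set $\alpha^{-1}(\Omega)\subset Z$, which contains $\sigma(B^{\circ})$, and $\varphi^{\sigma}\circ\sigma=\varphi$ on $B^{\circ}$.

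It remains to extend over the non-generic part of $\sigma(B)$, and this is where the properness of $\varrho$ enters. Pass to graphs: in $A\times Q$ let $\Sigma$ be the irreducible analytic set, defined near $\eta(B^{\circ})$, obtained as the closure of $\{(\eta(b),\varphi(b)):b\in B^{\circ}\}$; since $\varphi$ is constant on the fibres of $\eta$, the projection $\Sigma\to A$ is generically one-to-one and $\dim\Sigma=\dim A$. Let $\Gamma^{\sigma}$ be the unique irreducible component of $(\alpha\times\mathrm{id}_{Q})^{-1}(\Sigma)$ dominating $Z$; over $\alpha^{-1}(\Omega)\cap Z^{\circ}$ it coincides with the graph of $\varphi^{\sigma}$, and $\Gamma^{\sigma}\to Z$ is generically biholomorphic. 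Because $\varrho$ is proper and $\sigma(B^{\circ})$ is dense in $\sigma(B)$, the first projection $\Gamma^{\sigma}\to Z$ is proper over, and surjects onto, a Euclidean neighbourhood $N(B)$ of $\sigma(B)$ — this is the standard behaviour of closures of graphs of meromorphic maps over a proper base. The meromorphic map $\varphi^{\sigma}\colon N(B)\dasharrow Q$ with graph $\Gamma^{\sigma}\cap(N(B)\times Q)$ is still $\alpha$-descending, since this is a condition on its generic branch, which is unchanged, and $\varphi^{\sigma}\circ\sigma=\varphi$ holds on the dense subset $B^{\circ}$ of $B$, hence everywhere.

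I expect the formal parts — the local definition and the identities $\varphi^{\sigma}\circ\sigma=\varphi$ and "$\alpha$-descending" — to be routine. The two substantive points are the consistency of the descent germs, so that $\varphi^{\sigma}$ is single valued, which I handle through the connectedness of $B^{\circ}$ (complement of a proper analytic subset), and the passage from a neighbourhood of the generic part $\sigma(B^{\circ})$ to a neighbourhood of all of $\sigma(B)$; the latter is the main obstacle, and it rests on the properness of $\varrho$ together with the normality of $Z$, via the usual analysis of graphs of meromorphic maps.
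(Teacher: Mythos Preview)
Your core construction --- pull back the descent germ $\psi_b=\varphi_{\eta(b)}$ via $\alpha$ to get $(\varphi^{\sigma})_{\sigma(b)}:=\psi_b\circ\alpha$ --- is exactly what the paper does, and the verification that $\varphi^{\sigma}\circ\sigma=\varphi$ is the same. But you have overcomplicated the argument in two places, and one of them is an actual gap.

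First, the passage to a single map $\psi:\Omega\to Q$ on a neighbourhood $\Omega\subset A$ of $\eta(B^{\circ})$ is not justified: local constancy of $b\mapsto\psi_b$ along $B^{\circ}$ is an analytic-continuation statement and does \emph{not} exclude monodromy when $\eta$ is not injective. You do not actually need a global $\psi$ on $A$; it suffices to patch the germs $\psi_b\circ\alpha$ directly on $Z$ near $\sigma(B)$, where the section $\sigma$ guarantees that overlapping neighbourhoods of $\sigma(b_1),\sigma(b_2)$ force $b_1,b_2$ close in $B$, so there is no monodromy issue on $Z$.

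Second --- and this is the real gap --- your third paragraph invokes the properness of $\varrho$ to extend from a neighbourhood of $\sigma(B^{\circ})$ to one of $\sigma(B)$, but the link is not made: properness of $\varrho:Z\to B$ says nothing about properness of $\Gamma^{\sigma}\to Z$, and your $\Sigma$ is only constructed near $\eta(B^{\circ})$, so its pullback need not cover a neighbourhood of $\sigma(B)$. The paper's proof avoids this entirely: the $\eta$-descending hypothesis (Definition~\ref{d.descend}) provides a \emph{meromorphic} germ $\varphi_{\eta(u)}$ at \emph{every} $u\in B$, not just generic $u$, so one sets $(\varphi^{\sigma})_{\sigma(u)}:=\varphi_{\eta(u)}\circ\alpha$ for all $u\in B$ at once. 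Consistency of nearby germs follows since for $u'$ near $u$ with $\eta$ submersive at $u'$ (such $u'$ are dense), both $\varphi_{\eta(u)}$ and $\varphi_{\eta(u')}$ satisfy $(\cdot)\circ\eta=\varphi$ near $u'$, hence agree. The properness of $\varrho$ is in fact not used in the paper's argument at all; your attempt to use it led you astray. The fix is simply to drop the restriction to $B^{\circ}$ and use the descent germ everywhere from the start.
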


     \begin{proof} Since $\varphi$ is $\eta$-descending, for each $u \in B$, we have a germ $\varphi_{\eta(u)}$ of meromorphic maps at $\eta(u)$ such that the germ of $\varphi$ at $u$ is just
      $\varphi_{\eta(u)} \circ \eta$. Let $(\varphi^{\sigma})_{\sigma(u)}$ be the germ of meromorphic maps at $\sigma(u)$ given by $\varphi_{\eta(u)} \circ \alpha$. The collection of germs
      $(\varphi^{\sigma})_{\sigma(u)}$ as $u$ varies over $B$ define the meromorphic map $\varphi^{\sigma}$ in a neighborhood of $\sigma(B)$ with the desired properties. \end{proof}

      Next proposition is a crucial step of the proof of Theorem \ref{t.main}.

    \begin{proposition}\label{p.mainext}
      Let ${\bf X}$ be a projective variety with a  web $\sW$ which is pairwise non-integrable.
      Let $\sV$ be an irreducible component of $\sW$ with  the universal family morphisms $f:A \to {\bf X}$ and $g: A \to \sV$, as in Definition \ref{d.construct}.  Fix a component $\sE$ of ${\rm Fin}(g),$ which is nonempty because $\sW$ is pairwise non-integrable.
        Let $B$ be a normal  analytic variety and let $\eta: B \to A$ be a generically submersive holomorphic map.
        Let $$ \begin{array}{ccccc} Y & \stackrel{\nu}{\longrightarrow} & Z & \stackrel{\varrho}{\longrightarrow} & B \\
     \widetilde{\alpha} \downarrow & & \downarrow \alpha & & \downarrow g \circ \eta \\ {\rm Univ}_{\sE} & \stackrel{\mu_{\sE}}{\longrightarrow} & A & \stackrel{g}{\longrightarrow} & \sV. \end{array} $$
                be a $(\sV, \sE)$-construction on $\eta: B \to A$  from Definition \ref{d.construct}.
        Let $N(B)$ be a neighborhood of $\sigma(B)$ where $\sigma: B \to Z$ is the canonical section of $\varrho$ and let $\varphi: N(B) \dasharrow Q$ be an $\alpha$-descending meromorphic map to a projective variety $Q$,  which sends $f^*\sW$ into a web $\sU$ on $Q$ in the sense of Definition \ref{d.descend}.
       Then there exists a projective variety $S$ with $\gamma \in \sC(S, Q)$ and an $\widetilde{\alpha}$-descending  meromorphic map $\widetilde{\varphi}: Y \dasharrow S$ such that for any $b \in B$ and any   $x \in \nu^{-1}(\sigma(b)),$ the germ $\widetilde{\varphi}_x$ of $\widetilde{\varphi}$ at $x$,
       the germ $\varphi_{\sigma(b)}$ of $\varphi$ at $\sigma(b)$ and  the germ $\nu_x$ of $\nu$ at $x$ satisfy \begin{eqnarray}\label{e.sim} \nu_x^* \varphi_{\sigma(b)} &  \sim & \gamma \circ \widetilde{\varphi}_x, \end{eqnarray} i.e., the following diagram, where we use a double arrow to denote the algebraic correspondence $\gamma$, commutes at the level of germs.
       $$ \begin{array}{ccccccc} {\rm Univ}_{\sE} & \stackrel{\widetilde{\alpha}}{\longleftarrow} & Y & =& Y & \stackrel{\widetilde{\varphi}}{\dasharrow} & S  \\ \mu_{\sE} \downarrow & & \downarrow \nu & & & & \Downarrow \gamma \\ A & \stackrel{\alpha}{\longleftarrow} &
        Z & \supset & N(B) &  \stackrel{\varphi}{\dasharrow} & Q. \end{array}$$ In particular, the $\widetilde{\alpha}$-descending map $\widetilde{\varphi}$ sends
        the web $(f \circ \mu_{\sE})^*\sW$ into the web $(\gamma^{-1})_*\sU$ on $S$ in the sense
        of Definition \ref{d.mm} (6) and Definition \ref{d.descend}.
       \end{proposition}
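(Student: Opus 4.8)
The plan is to reduce everything to a construction for the \emph{descended} maps — first on $A$, then on ${\rm Univ}_{\sE}$ — and to feed the latter into Proposition \ref{p.Psi}, applied with $A$ in place of $M$, $Q$ in place of $M'$, and $\sE$ (an irreducible component of $f^*\sW$) in place of $\sF$. First I would dispose of the source side. Shrinking $N(B)$ if necessary so that it is connected and $\alpha|_{N(B)}$ is submersive onto a connected open set $V := \alpha(N(B)) \subset A$, the $\alpha$-descending property lets the germs $\varphi_{\alpha(u)}$ patch into one generically biholomorphic meromorphic map $\varphi_0 : V \dasharrow Q$ with $\varphi = \varphi_0 \circ \alpha$ on $N(B)$; by hypothesis (together with Definitions \ref{d.descend} and \ref{d.respect}), ${\rm d}\varphi_0$ carries the web-structure of $f^*\sW$ into that of $\sU$. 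In particular each line subbundle in the web-structure of $f^*\sW$ coming from $\sE$ is carried onto a line subbundle in the web-structure of $\sU$; let $\sU_{\sE}$ be the union of those irreducible components of $\sU$ whose web-structure leaves arise this way, and set
$$ S := {\rm Univ}_{\sU_{\sE}}, \qquad \gamma := \mu_{\sU_{\sE}} : {\rm Univ}_{\sU_{\sE}} \longrightarrow Q . $$
Since $\sU$ is a web, $\mu_{\sU_{\sE}}$ is generically finite and $\dim S = \dim {\rm Univ}_{\sU_{\sE}} = \dim Q$, so $\gamma \in \sC(S,Q)$ with ${\rm Graph}(\gamma) \cong {\rm Univ}_{\sU_{\sE}}$; unravelling Definition \ref{d.mm}(6) one finds $(\gamma^{-1})_*\sU = \mu_{\sU_{\sE}}^*\sU$, the pull-back of the \emph{whole} web $\sU$ under $\mu_{\sU_{\sE}}$.

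The heart of the argument is to construct an $\widetilde\alpha$-descending $\widetilde\varphi$, which reduces to constructing a meromorphic map $\widetilde\varphi_0 : {\rm Univ}_{\sE} \dasharrow {\rm Univ}_{\sU_{\sE}}$ with $\mu_{\sU_{\sE}} \circ \widetilde\varphi_0 = \varphi_0 \circ \mu_{\sE}$ and then setting $\widetilde\varphi := \widetilde\varphi_0 \circ \widetilde\alpha : Y \dasharrow S$. For $\widetilde\varphi_0$ I would apply Proposition \ref{p.Psi} with $\sF = \sE$ on $A$, $\sF' = \sU_{\sE}$ on $Q$, base map $\phi := \varphi_0$ (after intersecting $V$ with a suitable dense Zariski open subset of $A$), and $\mathbf{U} := \mu_{\sE}^{-1}(V)$, taking for $\psi : \mathbf{U} \dasharrow {\rm Univ}_{\sU_{\sE}}$ the \emph{leaf-to-leaf} map: a point $p$ over $a = \mu_{\sE}(p)$ lying on the $\sE$-member $C_p$ is sent to the point of ${\rm Univ}_{\sU_{\sE}}$ over $\varphi_0(a)$ given by the member of $\sU_{\sE}$ whose germ at $\varphi_0(a)$ equals $\varphi_0$ of the germ of $C_p$ at $a$. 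Over the dense open locus $\psi$ is well defined and generically biholomorphic onto its image: there $\varphi_0$ is biholomorphic and carries germs of $\sE$-members injectively to germs of $\sU_{\sE}$-members, while by Proposition \ref{p.injection} a member of $\sU_{\sE}$ is determined by the germ of one of its leaves at a general point, so the receiving member exists and is unique. Hypotheses (1)--(3) of Proposition \ref{p.Psi} are then immediate ($\sE$ is irreducible, so $\mathbf{U}$ meets every component of ${\rm Univ}_{\sE}$; $\mu_{\sE}(\mathbf{U}) \subset V$; and $\mu_{\sU_{\sE}} \circ \psi = \varphi_0 \circ \mu_{\sE}$ on $\mathbf{U}$ because $\psi(p)$ lies over $\varphi_0(\mu_{\sE}(p))$), and hypothesis (4) holds because $\varphi_0$ carries the arc $C_p \cap V$ into a single leaf of the receiving $\sU_{\sE}$-foliation, that is, into the member $\psi(p)$. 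Proposition \ref{p.Psi} then produces $\widetilde\varphi_0 := \Psi$ with $\mu_{\sU_{\sE}} \circ \widetilde\varphi_0 = \varphi_0 \circ \mu_{\sE}$ and $\widetilde\varphi_0|_{\mathbf{U}} = \psi$, and $\widetilde\varphi := \widetilde\varphi_0 \circ \widetilde\alpha$ is $\widetilde\alpha$-descending by construction.

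It remains to check the two assertions. For the germ identity $(\ref{e.sim})$: fixing $b \in B$ and $x \in \nu^{-1}(\sigma(b))$, the identities $\varphi = \varphi_0 \circ \alpha$, $\alpha \circ \nu = \mu_{\sE} \circ \widetilde\alpha$ (from the given diagram), and $\mu_{\sU_{\sE}} \circ \widetilde\varphi_0 = \varphi_0 \circ \mu_{\sE}$ show that at the level of germs both $\nu_x^* \varphi_{\sigma(b)}$ and $\gamma \circ \widetilde\varphi_x$ equal $(\varphi_0 \circ \mu_{\sE}) \circ \widetilde\alpha_x$, so $(\ref{e.sim})$ holds wherever the germs are defined; this is exactly the commutativity of the displayed diagram. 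For the web assertion: at a general point the three maps $\mu_{\sE}$, $\varphi_0$, $\mu_{\sU_{\sE}}$ are local biholomorphisms and locally $\widetilde\varphi_0 = \mu_{\sU_{\sE}}^{-1} \circ \varphi_0 \circ \mu_{\sE}$; since the web-structure of $(f\circ\mu_{\sE})^*\sW = \mu_{\sE}^*(f^*\sW)$ is the ${\rm d}\mu_{\sE}$-preimage of that of $f^*\sW$, and the web-structure of $(\gamma^{-1})_*\sU = \mu_{\sU_{\sE}}^*\sU$ is the ${\rm d}\mu_{\sU_{\sE}}$-preimage of that of $\sU$, the hypothesis that ${\rm d}\varphi_0$ sends the web-structure of $f^*\sW$ into that of $\sU$ yields at once that ${\rm d}\widetilde\varphi_0$ sends the web-structure of $(f\circ\mu_{\sE})^*\sW$ into that of $(\gamma^{-1})_*\sU$; by Definition \ref{d.descend} this means that $\widetilde\varphi$ sends $(f\circ\mu_{\sE})^*\sW$ into $(\gamma^{-1})_*\sU$, completing the proof.

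I expect the main obstacle to be the middle step: arranging $S$ and $\gamma$ so that one and the same $\widetilde\varphi$ simultaneously descends through $\widetilde\alpha$, realizes an honest algebraic correspondence to $Q$, and respects the pulled-back web-structures — it is the choice $S = {\rm Univ}_{\sU_{\sE}}$, $\gamma = \mu_{\sU_{\sE}}$ that makes all three compatible — together with the verification of hypothesis (4) of Proposition \ref{p.Psi}, namely that $\varphi_0$ genuinely carries whole local arcs of $\sE$-members into single members of $\sU_{\sE}$ and not merely their tangent directions; this is precisely where one must use that a member of a bounded family of curves is pinned down by a finite-order jet at a general point (Propositions \ref{p.injection} and \ref{p.Psi}). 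The patching producing $\varphi_0$ and the local-conjugation verification of the web assertion are routine by comparison.
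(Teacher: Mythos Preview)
Your construction has a genuine gap: the map $\widetilde\varphi := \widetilde\varphi_0 \circ \widetilde\alpha$ is \emph{not} defined on all of $Y$, so it does not accomplish the extension the proposition asserts. Proposition~\ref{p.Psi}, applied as you do with $M = A$ and $V = \alpha(N(B)) \subset A$, produces $\widetilde\varphi_0 = \Psi$ only on $\mu_{\sE}^{-1}(V) \subset {\rm Univ}_{\sE}$. To get $\widetilde\varphi$ on all of $Y$ you would need $\widetilde\alpha(Y) \subset \mu_{\sE}^{-1}(V)$, i.e.\ (since $\mu_{\sE}\circ\widetilde\alpha = \alpha\circ\nu$) that $\alpha(Z) \subset V$. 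But $Z$ is the pull-back of $g:A\to\sV$ along $g\circ\eta$, so each fiber $\varrho^{-1}(b)$ maps via $\alpha$ onto the \emph{entire} $\sV$-curve $g^{-1}(g(\eta(b)))\subset A$, whereas $V = \alpha(N(B))$ is only a neighborhood of the section $\eta(B)$. Hence $\alpha(Z)\not\subset V$, and your $\widetilde\varphi$ lives only on a neighborhood of $\nu^{-1}(\sigma(B))$ in $Y$ --- nothing has been extended beyond $N(B)$. Note that your argument never uses the hypothesis $\sE\subset{\rm Fin}(g)$; this is a symptom of the gap.

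The paper repairs this by passing down to $\sV$ before invoking Proposition~\ref{p.Psi}. One first lifts $\varphi$ to $\varphi':N(B)\dasharrow A':={\rm Univ}_{\sV'}$, where $\sV'\subset\sU$ is the component receiving ${\rm Fib}(g)$; Proposition~\ref{p.Fin} then guarantees that $\varphi'$ sends $\sE\subset{\rm Fin}(g)$ into ${\rm Fin}(g')$. Now Proposition~\ref{p.Psi} is applied with base $\sV$ (not $A$), families $\sF:=g_*\sE$ on $\sV$ and $\sF':=(g')_*{\rm Fin}(g')$ on $\sV'$, and $V$ a neighborhood of $g(\eta(b))$ in $\sV$. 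The ${\rm Fin}$ condition is exactly what makes ${\rm Univ}_{\sE}\dasharrow{\rm Univ}_{\sF}$ and ${\rm Univ}_{{\rm Fin}(g')}\dasharrow{\rm Univ}_{\sF'}$ generically finite, so nothing is lost. The payoff: on $Z$ one has $g\circ\alpha = (g\circ\eta)\circ\varrho$, so $g\circ\mu_{\sE}\circ\widetilde\alpha = (g\circ\eta)\circ(\varrho\circ\nu)$ collapses every fiber $(\varrho\circ\nu)^{-1}(b)\subset Y$ to the single point $g(\eta(b))\in V$. Thus the extension furnished by Proposition~\ref{p.Psi} over $V\subset\sV$ now genuinely covers a neighborhood of each such fiber, and the maps glue over $B$ to give $\widetilde\varphi$ on all of $Y$. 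The variety $S$ is a component of ${\rm Univ}_{\sF'}$, and $\gamma\in\sC(S,Q)$ arises from the generically finite diagram $S\subset{\rm Univ}_{\sF'}\dashleftarrow{\rm Univ}_{{\rm Fin}(g')}\to A'\stackrel{f'}{\to}Q$. (A secondary issue: your germs $\varphi_{\alpha(u)}$ need not patch to a single $\varphi_0$ on $\alpha(N(B))$, since distinct $u,u'$ with $\alpha(u)=\alpha(u')$ may yield different descended germs; the paper sidesteps this by working fiber-by-fiber over $B$. But this is minor compared with the domain problem above.)
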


\begin{proof}
By shrinking $N(B)$ if necessary, we may assume that each fiber of $\varrho$ intersects $N(B)$ along a connected set.
Since $\varphi$ sends $f^*\sW$ into $\sU$, there exists an irreducible component, say $\sV'$, of $\sU$ such that $\varphi$ sends germs of fibers of $\varrho$ at $\sigma(B)$
 to germs of members of $\sV'$, inducing a $(g \circ \eta)$-descending meromorphic map $\varphi^{\sharp}: B \dasharrow \sV'$.  Setting $ A':= {\rm Univ}_{\sV'},$ write $g': A' \to \sV'$ for $\rho_{\sV'}$ and  $f': A' \to Q$ for $\mu_{\sV'}$. Then $\varphi$ induces an  $\alpha$-descending meromorphic map $\varphi': N(B) \dasharrow A'$ such that $\varphi = f' \circ \varphi'$ and $\varphi'$ sends ${\rm Fib}(g)$ into ${\rm Fib}(g')$. It follows by Proposition \ref{p.Fin} that $\varphi'$ sends $\sE \subset {\rm Fin}(g)$ into
  ${\rm Fin} (g').$   $$ \begin{array}{ccccccc}  & & & & N(B) & \stackrel{\varphi}{\dasharrow} & Q \\ & & & &  \parallel & & \uparrow f'  \\ A & \stackrel{\alpha}{\longleftarrow} & Z & \supset & N(B) & \stackrel{\varphi'}{\dasharrow} & A'  \\ g \downarrow & & \downarrow \varrho & & \downarrow & & \downarrow g' \\ \sV & \stackrel{g \circ \eta}{\longleftarrow} &
B & = & B &  \stackrel{\varphi^{\sharp}}{\dasharrow} &\sV'. \end{array}$$
 We have the covering families of curves $\sF:= g_*\sE$ on $\sV$ and $\sF':= (g')_*{\rm Fin}(g')$ on $\sV'$ with the universal family morphisms $$ \begin{array}{ccc}
{\rm Univ}_{\sE} & \stackrel{{\rm univ}_{g_{*}\sE}}{\dashrightarrow} & {\rm Univ}_{\sF} \\
\mu_{\sE} \downarrow & & \downarrow \mu_{\sF}  \\ A & \stackrel{g}{\longrightarrow} & \sV \end{array}   \mbox{ and } \begin{array}{ccc}
{\rm Univ}_{{\rm Fin}(g')} & \stackrel{{\rm univ}_{g'_{*}{\rm Fin}(g')}}{\dashrightarrow} & {\rm Univ}_{\sF'} \\
\mu_{{\rm Fin}(g')} \downarrow & & \downarrow \mu_{\sF'}  \\ A' & \stackrel{g'}{\longrightarrow} & \sV'. \end{array} $$
where the two rational maps on the first row are  generically finite by the definition of ${\rm Fin}(g)$ and ${\rm Fin}(g')$.

 Fix a point $b \in B$ and set $u = \sigma(b) \in \sigma(B).$  By the $\alpha$-descending property of $\varphi'$, we have  a neighborhood  $\alpha (u) \in O \subset A$ with a meromorphic map $\varphi'_{\alpha(u)}: O \dasharrow A'$
 such that $\varphi'_{\alpha(u)} \circ \alpha$ gives the germ of $\varphi'$ at $u$. Similarly,  we have a neighborhood $(g \circ \eta)(b)  \in V \subset \sV$ with a meromorphic map
 $\phi: V \dasharrow \sV'$ such that $\phi \circ (g \circ \eta)$ gives the germ of $\varphi^{\sharp}$ at $b$. We may assume that $g(O) \subset V$
 by shrinking $O$ if necessary.  Let ${\bf U} \subset {\rm Univ}_{\sE}$ be a dense open subset in  ${\rm univ}_{g_*\sE} (\mu_{\sE}^{-1}(O))$. We have  $\mu_{\sF}({\bf U}) \subset V$. Since
 $\varphi'$ sends $\sE$ into ${\rm Fin}(g')$, it induces a generically biholomorphic  meromorphic map  $\psi: {\bf U}\dasharrow {\rm Univ}_{\sF'}$ such that
 the diagram
   $$ \begin{array}{ccccc} {\rm Univ}_{\sF} & \supset & {\bf U} & \stackrel{\psi}{\dasharrow} &  {\rm Univ}_{\sF'} \\
\mu_{\sF} \downarrow & &  \downarrow & & \downarrow \mu_{\sF'} \\
\sV & \supset & V & \stackrel{\phi}{\dasharrow}  & \sV' \end{array} $$ commutes and
 $$\phi \left( \mu_{\sF}(\rho_{\sF}^{-1}(\rho_{\sF}(y)) \cap {\bf U})\right) \subset \mu_{\sF'}\left(\rho_{\sF'}^{-1} (\rho_{\sF'} (\psi (y)))  \right)$$ for a general $ y \in {\bf U}.$
 Thus we can apply
 Proposition \ref{p.Psi} to obtain  a generically biholomorphic meromorphic map $$\mu_{\sF}^{-1}(V)  \stackrel{\Psi}{\dasharrow} \mu_{\sF'}^{-1} (V')
 \mbox{ satisfying } \psi = \Psi|_{\bf U}.$$ Then
 the composition $$(g \circ \mu_{\sE} \circ \widetilde{\alpha})^{-1}(V) \stackrel{\widetilde{\alpha}}{\to} (g \circ \mu_{\sE})^{-1}(V) \stackrel{{\rm univ}_{g_{*}{\rm Fin}(g)}}{\dasharrow} \mu_{\sF}^{-1}(V) \stackrel{\Psi}{\dasharrow} {\rm Univ}_{\sF'}$$ is a meromorphic map, to be denoted by $\widetilde{\varphi}_{(\varrho \circ \nu)^{-1}(b)}$,   defined in a neighborhood of $(\varrho\circ \nu)^{-1}(b)$ in $Y$. Since $\widetilde{\varphi}_{(\varrho \circ \mu)^{-1}(b)}$ is uniquely determined by the germ of $\varphi$ at $u$, the collection $\{ \widetilde{\varphi}_{(\varrho \circ \mu)^{-1}(b)}, \ b \in B \}$ defines a meromorphic map $\widetilde{\varphi}: Y \dasharrow {\rm Univ}_{\sF'}$. Let $S$ be the irreducible component of ${\rm Univ}_{\sF'}$ where the image of $\widetilde{\varphi}$ lies. Then there exists $\gamma \in \sC(S, Q)$ given by the diagram
 $$ \begin{array}{ccccccc} & &
{\rm Univ}_{{\rm Fin}(g')} & \stackrel{{\rm univ}_{g'_{*}{\rm Fin}(g')}}{\dashrightarrow} & {\rm Univ}_{\sF'} & \supset & S\\ & &
\mu_{{\rm Fin}(g')} \downarrow & & \downarrow \mu_{\sF'} & &   \\ Q & \stackrel{f'}{\longleftarrow} &  A' & \stackrel{g'}{\longrightarrow} & \sV' \end{array} $$
where the first row, the first vertical arrow and the first horizontal arrow in the third row are generically finite.
The property (\ref{e.sim}) is immediate from the construction.
 \end{proof}

 We are ready to finish the proof of Theorem \ref{t.main}.

 \begin{proof}[Proof of Theorem \ref{t.main}]
 The given conditions on $\varphi: U \to U'$ say that $\varphi$ sends $\sW$ into $\sW'$. Set $n= \dim X$, $Y_0 = U$ and $X_0=X$.
 As the web $\sW$ is bracket-generating, we can find a tower  on $Y_0 \subset X_0$ constructed via a certain choice of  $(\sV_1, \ldots, \sV_n)$ and any choice of $(\sE_1, \ldots, \sE_n),$   satisfying the property stated in Proposition \ref{p.tower}.
 Since $\sW$ is pairwise non-integrable, so is $(\lambda_{i-1} \circ f_i)^*\sW$ on $A_i$. Thus we can assume that we have chosen  $\sE_i$  as a component of
${\rm Fin}(g_i) \subset (\lambda_{i-1} \circ f_i)^*\sW$ for each $i$. Using the diagram from Definition \ref{d.construct} and Definition \ref{d.tower},
    $$ \begin{array}{ccccccc} & & & & A_i & \stackrel{f_i}{\longrightarrow} & X_{i-1} \\
    & & & & \eta_i \uparrow & & \uparrow \chi_{i-1} \\
         Y_i & \stackrel{\nu_i}{\longrightarrow} & Z_i & \stackrel{\varrho_i}{\longrightarrow} & B_i & \stackrel{\zeta_i}{\longrightarrow} & Y_{i-1}\\
     \chi_i \downarrow & & \downarrow \alpha_i & & \downarrow g_i \circ \eta_i & & \\ X_i & \stackrel{h_i}{\longrightarrow} & A_i & \stackrel{g_i}{\longrightarrow} & \sV_i  & &  \end{array} $$
 and starting with $\beta_0$ defined as the identity $X'_0 := X',$ we will find inductively  for each $1 \leq i \leq n,$ \begin{itemize}
 \item[(i)] a projective variety $X'_i$ and $\beta_i \in \sC(X'_i, X')$;
\item[(ii)]  a generically submersive meromorphic map $\varphi_i: B_i \dasharrow X_{i-1}'$ which is  $\eta_i$-descending and sends  $(\lambda_{i-1}\circ f_i)^*\sW$ into $(\beta^{-1}_{i-1})_*\sW'$; and
\item[(iii)] a generically submersive meromorphic map  $\widetilde{\varphi}_i: Y_i \dasharrow X_i'$ which is $\chi_i$-descending and sends $\lambda_i^*\sW$ into $(\beta^{-1}_i)_*\sW'$,
    \end{itemize}
    such that
     for any point  $y_i \in D_i \subset Y_i$ of Lemma \ref{l.diagonal} with $y := \xi_i(y_i) \in Y_0$, the germ $\widetilde{\varphi}_{i,y_i}$ of $\widetilde{\varphi}_i$ at $y_i$,
       the germ $\varphi_y$ of $\varphi$ at $y$ and  the germ $\xi_{i, y_i}$ of $\xi_i$ at $y_i$ satisfy \begin{eqnarray}\label{e.i}   \xi_{i, y_i}^*\varphi_y & \sim &\beta_i \circ \widetilde{\varphi}_{i,y_i},\end{eqnarray} or equivalently, the germ $\widetilde{\varphi}_{i,x_i}$ at $x_i:= \chi_i(y_i),$ representing
       $\widetilde{\varphi}_{i,y_i}$ by the $\chi_i$-descending property, satisfies \begin{eqnarray}\label{e.ii}
       \lambda_{i, x_i}^* \varphi_y & \sim & \beta_i \circ \widetilde{\varphi}_{i, x_i} \end{eqnarray}
       where $\lambda_{i, x_i}$ is the germ of $\lambda_i$ at $x_i$.

 To start with, set  $$\varphi_1 := \zeta_1^*\varphi = (f_1 \circ \eta_1)^* \varphi : B_1 \dasharrow X'$$ which is $\eta_1$-descending
 and sends $f_1^*\sW$ into $ \sW'$. For a point $b_1 \in  B_1$ with $\zeta_1(b_1) = y \in Y_0$,  the germ $\varphi_{1, \eta_1(b_1)}$ at $\eta_1(b_1) \in A_1$ representing $\varphi_1$ satisfies \begin{eqnarray}\label{e.b1} \varphi_{1,\eta_1( b_1)} &=&  f_{1, \eta_1(b_1)}^* \varphi_y.\end{eqnarray}  Applying Lemma \ref{l.lift}, we have   $$\varphi_1^{\sigma_1}: N(B_1) \dasharrow X'$$ defined in a neighborhood $N(B_1)$ of $\sigma_1(B_1)$, which is $\alpha_1$-descending.  By Proposition \ref{p.mainext}, we find
$ \gamma_1 \in \sC(X_1', X')$ and a $\chi_1$-descending meromorphic map $$\widetilde{\varphi}_1 = \widetilde{(\varphi_1^{\sigma_1})} : Y_1 \dasharrow X_1'$$ such that,  by (\ref{e.sim}) of Proposition \ref{p.mainext}, for $y_1 \in \nu_1^{-1}(\sigma(b_1))$ and $x_1 = \chi_1(y_1)$,  $$h_{1, x_1}^*  \varphi_{1, \eta_1(b_1)}  \  \sim \ \gamma_1 \circ \widetilde{\varphi}_{1,x_1}.$$ Then
 (\ref{e.b1}) and $\lambda_1 = f_1 \circ h_1$ give $$ \lambda_{1, x_1}^* \varphi_y \ \sim \  \gamma_1 \circ \widetilde{\varphi}_{1, x_1}.$$
Thus (\ref{e.ii}) holds  for $i=1$ if we put $\beta_1 = \gamma_1$.

Assume that we have found $ \beta_i \in \sC(X'_i, X'),$  $\varphi_i$ and $ \widetilde{\varphi}_i$ satisfying (\ref{e.ii}) for $1 \leq i <n$.
Define $$\varphi_{i+1} = \zeta_{i+1}^* \widetilde{\varphi}_i  : B_{i+1} \dasharrow X'_i,$$ which is $\eta_{i+1}$-descending
 by $\chi_i \circ \zeta_{i+1} = f_{i+1} \circ \eta_{i+1} $ and sends $(\lambda_i \circ f_{i+1})^*\sW$ into $(\beta^{-1}_i)_* \sW'$.
This implies that for a point $y_i \in D_i, x_i:= \chi_i(y_i)$ and $b_{i+1} \in B_{i+1}$ with $\zeta_{i+1}(b_{i+1}) = y_i$,
  the germ $\varphi_{i+1, \eta_{i+1}(b_{i+1})}$ at $\eta_{i+1}(b_{i+1}) \in A_{i+1}$ representing $\varphi_{i+1}$ satisfies \begin{eqnarray}\label{e.bi+1} \varphi_{i+1,\eta_{i+1}( b_{i+1})} &=&  f_{i+1, \eta_{i+1}(b_{i+1})}^* \widetilde{\varphi}_{x_i}.\end{eqnarray}
Applying Lemma \ref{l.lift},
 we have $$\varphi_{i+1}^{\sigma_{i+1}}: N(B_{i+1}) \dasharrow X'_i$$ defined in a neighborhood $N(B_{i+1})$ of $\sigma_{i+1}(B_{i+1})$, which is $\alpha_{i+1}$-descending.  By Proposition \ref{p.mainext}, we find $ \gamma_{i+1} \in \sC(X'_{i+1}, X'_i)$  and
$$\widetilde{\varphi}_{i+1} = \widetilde{(\varphi_{i+1}^{\sigma_{i+1}})} : Y_{i+1} \dasharrow X_{i+1}'$$ which is
$\chi_{i+1}$-descending, such that  for  $$y_{i+1} \in \nu_{i+1}^{-1}(\sigma_{i+1}(b_{i+1})) \in D_{i+1} \mbox{ and } x_{i+1} := \chi_{i+1} (y_{i+1}),$$ we have by
 (\ref{e.sim}), \begin{eqnarray}\label{e.hi+1}
  h_{i+1, x_{i+1}}^*  \varphi_{i+1, \eta_{i+1}(b_{i+1})}  &  \sim & \gamma_{i+1} \circ \widetilde{\varphi}_{i+1,x_{i+1}}. \end{eqnarray}
  Then, in the notation of Definition \ref{d.mm} (4) and (5),
    \begin{eqnarray*}
   \lambda^*_{i+1, x_{i+1}} \varphi_y & = & (f_{i+1} \circ h_{i+1})_{x_{i+1}}^* (\lambda_{i, x_i}^* \varphi_y) \mbox{ from } \lambda_{i+1} = \lambda_i \circ f_{i+1} \circ h_{i+1} \\
& \sim & \beta_i \circ \left( (f_{i+1} \circ h_{i+1})_{x_{i+1}}^* \widetilde{\varphi}_{i, x_i} \right)  \ \mbox{ by  the induction on (\ref{e.ii}) } \\ & \sim &\beta_i \circ  h_{i+1, x_{i+1}}^* \varphi_{i+1, \eta_{i+1}(b_{i+1})} \ \mbox{ by (\ref{e.bi+1})} \\
  & \sim & (\beta_i \circ \gamma_{i+1}) \circ \widetilde{\varphi}_{i+1, x_{i+1}} \ \mbox{ by (\ref{e.hi+1})}. \end{eqnarray*}
Thus  (\ref{e.ii})  holds for $i+1$  if we put $\beta_{i+1} = \beta_{i} \circ \gamma_{i+1}.$
  This completes the inductive construction of (i), (ii) and (iii) satisfying (\ref{e.i}).

Now let $F$  be a general fiber of $\theta_n : Y_n \to Y_0 = U$ such that the restriction
$\widetilde{\varphi}_n|_{Y} : F  \dasharrow X'_n$ is a well-defined rational  map, which we denote by $\Phi$.
The restriction  $\xi_n|_F: F \to X$ is generically finite on each irreducible component of $F$ by our choice of $(\sV_1, \ldots, \sV_n)$ in
Proposition \ref{p.tower}. By  (\ref{e.i}) and $\xi_n(D_n) = Y_0$ from Lemma \ref{l.diagonal}, there is an irreducible component $R$ of $F$ such that $\xi_n|_R$ regarded as
an element of $\sC(R, X)$ satisfies
$$\varphi \sim \left(\beta_n \circ \Phi|_R \circ (\xi_n|R)^{-1}\right) \circ \iota$$ for
 the inclusion $\iota: U \subset X$. As mentioned in Definition \ref{d.mm} (4), this implies that the graph of $\varphi$ is contained in the graph of a generically finite algebraic
correspondence between $X$ and $X'$.
\end{proof}

\section{\'Etale webs of smooth  curves}\label{s.etale}

\begin{definition}\label{d.etale} Let $X$ be a projective manifold, i.e., a nonsingular  projective variety.
An {\em \'etale web of smooth curves} on $X$  is a web $\sW$ of curves on $X$ with the following additional property:
 in terms of the universal family $\mu_{\sW}: {\rm Univ}_{\sW} \to X$ and $ \rho_{\sW}: {\rm Univ}_{\sW} \to \sW$, there exists a dense Zariski open subset $\sW^{\rm etale}$ of
the smooth locus of $\sW$ such that
for each $a
\in \sW^{\rm etale}$,
\begin{itemize} \item[(i)] $\rho_{\sW}^{-1}(a)$ is a smooth curve;
\item[(ii)]
$\mu_{\sW}|_{\rho_{\sW}^{-1}(O_a)}: \rho_{\sW}^{-1}(O_a) \to X$ is
unramified for some open neighborhood $O_{a}$ of $a$ in
$\sW^{\rm etale}$.
\end{itemize}
 For a point $a \in \sW^{\rm etale}$, the smooth curve
$$P_a := \mu_{\sW}(\rho_{\sW}^{-1}(a))  \ \subset \ X$$ is called a {\em regular member of the web}
$\sW$. A regular member has trivial normal bundle by (ii). Conversely, it is easy to see that a web $\sW$ of curves on $X$ is an \'etale web of smooth curves if a general member of $\sW$ is smooth and has trivial normal bundle in $X$.
When we work with an \'etale web $\sW$ of smooth curves on $X$, we will choose $X_{\rm reg}$ of Proposition \ref{p.regular} such that
$\mu_{\sW}^{-1} (X_{\rm reg}) \subset \rho_{\sW}^{-1}(\sW^{\rm etale}).$ This implies that any member of $\sW$ intersecting
$X_{\rm reg}$ is a regular member.
 If regular members of the web are rational curves, we say that $\sW$ is an {\em \'etale web of smooth rational curves}.
\end{definition}

\begin{lemma}\label{l.positive}
Let $\sW$ be an irreducible \'etale web of smooth curves on a projective manifold $X$ and let $H \subset X$ be an irreducible hypersurface which has  positive intersection number with
 members of $\sW$. Then there exist non-empty Zariski open subsets $H^{\sW} \subset H$ and $\sW^{H} \subset \sW^{\rm etale}$ such
that
 \begin{itemize}\item[(i)] for any $a \in \sW^H$,  the regular member $P_a$ intersects $H$ transversally and $P_a \cap H \subset H^{\sW}$;
 \item[(ii)] for any $x \in H^{\sW}$, there exists $a \in \sW^H$ with $x \in P_a$;   and
 \item[(iii)] if  a member $P\subset X$ of $\sW$ contains a point of $H^{\sW}$, then $P = P_a$ for some $a \in \sW^H$.
 \end{itemize}
 \end{lemma}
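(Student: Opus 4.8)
The plan is to analyze the incidence variety $Z:=\mu_\sW^{-1}(H)\subset{\rm Univ}_\sW$, and to take $\sW^H$ and $H^\sW$ to be the loci over which the two projections $\rho_\sW|_Z$ and $\mu_\sW|_Z$ degenerate as little as possible. Since $H$ is a Cartier divisor on the manifold $X$, $H\neq X$, and ${\rm Univ}_\sW$ is irreducible of dimension $\dim X$, Krull's principal ideal theorem shows that $Z$ is pure of dimension $\dim X-1$. Let $W\subseteq Z$ be the union of those irreducible components that dominate $H$ under $\mu_\sW$; this is non-empty because $\mu_\sW$ is surjective, and each component of $W$, being of dimension $\dim H$, is generically finite over $H$.

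\textbf{Key claim: $W$ also dominates $\sW$ under $\rho_\sW$.} Suppose some component $W_0$ of $W$ had $\rho_\sW(W_0)\subsetneq\sW$; then $\rho_\sW|_{W_0}$ would have positive-dimensional general fibre, forcing $\rho_\sW^{-1}(a)\cap W_0=\rho_\sW^{-1}(a)$ — a whole curve — for general $a\in\rho_\sW(W_0)$, i.e. $P_a\subseteq H$. So the components of $Z$ dominating $H$ are of two types: those dominating $\sW$, and those consisting of members contained in $H$. If no component of the first type existed, then for a general member $P_b$ — not contained in $H$ since $\sW$ covers $X$ and $H\neq X$ — the set $P_b\cap H$, which is non-empty because $H\cdot P_b>0$ and finite because $P_b\not\subseteq H$, would lie on components of $Z$ not dominating $\sW$, hence inside a fixed proper closed subvariety $H^{\ast}\subsetneq H$, of codimension $\geq2$ in $X$. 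But a general member of an \'etale web is disjoint from any fixed subvariety $B\subset X$ of codimension $\geq2$: a regular member $P_a$ has trivial normal bundle, so $h^0(P_a,N_{P_a/X}(-x))=0$ for every $x\in P_a$, whence the family of members through a fixed point is $0$-dimensional along regular members, and the incidence $\{(a,x):a\in\sW^{\rm etale},\ x\in P_a\cap B\}$ has dimension $\leq\dim B\leq\dim X-2<\dim\sW$, so its image in $\sW$ is not dense. This contradicts $H\cdot P_b>0$. Hence, after discarding the components of $W$ made of members contained in $H$, we keep a (still non-empty) $W$ with $\mu_\sW|_W:W\to H$ and $\rho_\sW|_W:W\to\sW$ both dominant and generically finite.

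\textbf{Carving out the open sets.} Applying generic smoothness to $\mu_\sW|_W$ and $\rho_\sW|_W$, and using once more that $W$ dominates $H$ and that the evaluations of $N_{P_a/X}$ are surjective (so the tangent line of $P_a$ at a general intersection point is not forced into $T_xH$), one obtains a dense Zariski open $\sW^H\subseteq\sW^{\rm etale}$ such that for $a\in\sW^H$: $P_a$ is a smooth regular member not contained in $H$; the fibre $\rho_\sW^{-1}(a)\cap Z$ is reduced and contained in $W$; $P_a$ meets $H$ transversally at each point of $P_a\cap H$; and $P_a\cap H$ avoids $H_{\rm sing}$, the exceptional locus of $\mu_\sW$, and $\mu_\sW(Z\setminus W)$. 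After shrinking $\sW^H$ so that $\rho_\sW|_W$ is finite over it, the set $\mu_\sW\big(W\cap\rho_\sW^{-1}(\sW\setminus\sW^H)\big)$ is closed of dimension $\leq\dim X-2$, hence proper in $H$. Dually, the constructible set $\mu_\sW\big(W\cap\rho_\sW^{-1}(\sW^H)\big)$ is dense in $H$ (its closure is $\mu_\sW(W)=H$), so it contains a dense open $V\subseteq H$; define $H^\sW$ to be the intersection of $V$ with the complement in $H$ of the proper closed set $H_{\rm sing}\,\cup\,(\text{exc. locus of }\mu_\sW)\,\cup\,(\text{branch locus of }\mu_\sW|_W)\,\cup\,\mu_\sW(Z\setminus W)\,\cup\,\mu_\sW\big(W\cap\rho_\sW^{-1}(\sW\setminus\sW^H)\big)$. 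Both $\sW^H$ and $H^\sW$ are then non-empty and open.

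\textbf{Verification and main obstacle.} Properties (i)–(iii) follow directly. For (i), $P_a$ meets $H$ transversally by construction, and for $x\in P_a\cap H$ the point $(a,x)$ lies in $\rho_\sW^{-1}(a)\cap Z\subseteq W$ and over $\sW^H$, so $x\in\mu_\sW(W\cap\rho_\sW^{-1}(\sW^H))\subseteq V$ and $x$ avoids the bad closed set, hence $x\in H^\sW$. For (ii), any $x\in H^\sW\subseteq V$ equals $\mu_\sW(a,x)$ for some $a\in\sW^H$ with $x\in P_a$. For (iii), a member $P$ through $x\in H^\sW$ corresponds to a point $p\in\mu_\sW^{-1}(x)$; since $x\notin\mu_\sW(Z\setminus W)$ we get $p\in W$, and since $x\notin\mu_\sW(W\cap\rho_\sW^{-1}(\sW\setminus\sW^H))$ we get $\rho_\sW(p)\in\sW^H$, so $P=P_{\rho_\sW(p)}$ with $\rho_\sW(p)\in\sW^H$. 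The main obstacle is the Key Claim: that the components of $\mu_\sW^{-1}(H)$ dominating $H$ also dominate $\sW$, i.e. that the positive intersection number cannot be accounted for by members lying inside $H$ or passing only through the degeneracy locus of $\mu_\sW$. This is precisely where the hypothesis $H\cdot P_a>0$ and the \'etaleness of the web (triviality of $N_{P_a/X}$, giving $h^0(N_{P_a/X}(-x))=0$) are used essentially; the rest is bookkeeping with proper closed subsets.
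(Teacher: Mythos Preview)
Your proof is correct and follows essentially the same route as the paper's: both analyze the incidence $Z=\mu_{\sW}^{-1}(H)\subset{\rm Univ}_{\sW}$ and carve out the open sets over which the two projections are \'etale, the paper compressing this into three sentences while you spell out the details --- in particular the argument, via triviality of $N_{P_a/X}$, that a general regular member avoids any fixed subset of codimension~$\geq 2$. One small slip in your Key Claim: you write that for general $b$ the points of $P_b\cap H$ lie on components of $Z$ \emph{not} dominating $\sW$, but any component meeting the fibre over a general $b$ necessarily \emph{dominates} $\sW$; what you want is that, under the no-type-1 hypothesis, such components dominate $\sW$ but not $H$, so their $\mu_{\sW}$-images still sit inside a proper $H^*\subsetneq H$ and your codimension-$2$ contradiction goes through unchanged.
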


 \begin{proof}
 Choose a Zariski open subset $\sW^o \subset \sW^{\rm etale}$ such that $\rho_{\sW}^{-1}(\sW^o) \cap \mu_{\sW}^{-1}H$ is \'etale over $\sW^o$. Set $H^o := \mu_{\sW}(\rho_{\sW}^{-1}(\sW^o)) \cap H$.
They satisfy the required conditions. \end{proof}

The following is immediate from Definition \ref{d.etale} (ii).

\begin{lemma}\label{l.trivial}
An \'etale web of smooth curves on a projective manifold of Picard number 1 cannot be univalent.
\end{lemma}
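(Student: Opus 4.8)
The plan is to assume, for a contradiction, that $\sW$ is univalent, and then to produce a nonzero effective divisor on $X$ that is disjoint from a general member of $\sW$. This is impossible on a projective manifold of Picard number $1$, where every nonzero effective divisor is ample and hence has positive intersection with every curve. First note that $\sW$ must be irreducible: a web with two or more irreducible components has at least two members through a general point, so $\mu_{\sW}$ could not be birational. Set $n=\dim X=\dim {\rm Univ}_{\sW}$. By Definition \ref{d.etale} (ii) the morphism $\mu_{\sW}$ is unramified along $\rho_{\sW}^{-1}(\sW^{\rm etale})$; since this locus lies in the smooth part of ${\rm Univ}_{\sW}$ and has dimension $n=\dim X$, there $\mu_{\sW}$ is \'etale, hence a local isomorphism. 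Combined with $\mu_{\sW}$ being birational, this should give a genuine isomorphism over a large open set. Concretely, I would pass to the normalization $W\to {\rm Univ}_{\sW}$ and let $\widetilde{\mu}:W\to X$, $\widetilde{\rho}:W\to\sW$ be the induced morphisms; since $\widetilde{\mu}$ is a birational morphism from a normal variety onto the smooth variety $X$, a purity argument (Zariski's main theorem) shows that $\widetilde{\mu}$ restricts to an isomorphism over a Zariski open subset $X^{\circ}\subset X$ with ${\rm codim}(X\setminus X^{\circ})\geq 2$. Transporting $\widetilde{\rho}$ through this isomorphism yields a morphism $\pi:X^{\circ}\to\sW$, and this is the object I want.

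\textbf{Key step: a general member lies in $X^{\circ}$.} The crux is to check that a general regular member $P_a$ is entirely contained in $X^{\circ}$, so that it is a complete fibre of $\pi$ and $\pi(P_a)=\{a\}$. A member $P_a=\widetilde{\mu}(\widetilde{\rho}^{-1}(a))$ meets $X\setminus X^{\circ}$ exactly when $\widetilde{\rho}^{-1}(a)$ meets $\widetilde{\mu}^{-1}(X\setminus X^{\circ})$, so it suffices to see that the $\widetilde{\rho}$-image of $\widetilde{\mu}^{-1}(X\setminus X^{\circ})$ is a proper closed subset of $\sW$. For this I would argue: each component of $\widetilde{\mu}^{-1}(X\setminus X^{\circ})$ on which $\widetilde{\mu}$ is generically finite has dimension $\leq n-2$ (it maps finitely to a set of that dimension), while each component contracted by $\widetilde{\mu}$ lies in the non-\'etale locus of $\widetilde{\mu}$, which by Definition \ref{d.etale} (ii) is contained in $\widetilde{\rho}^{-1}(\sW\setminus\sW^{\rm etale})$. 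In either case the $\widetilde{\rho}$-image has dimension $\leq n-2<\dim\sW$, so a general $a$ avoids it. I expect this dimension count to be the only genuinely delicate point of the whole argument, and it is precisely where the \'etale hypothesis (ii) is indispensable: without it the statement is false, as the univalent web of lines through a fixed point of $\BP^{N}$ shows, where $\mu_{\sW}$ contracts a divisor onto that point and every member passes through it.

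\textbf{Conclusion.} Once $\pi:X^{\circ}\to\sW$ is set up and a general member $P_a\subset X^{\circ}$ is a fibre of $\pi$, the contradiction is immediate. Since $\sW$ is projective, choose a general very ample divisor $A$ on $\sW$ with $a\notin{\rm supp}\,A$, and let $D\subset X$ be the Zariski closure of the pullback divisor $\pi^{*}A$ on $X^{\circ}$. Then $D$ is effective, being the closure of an effective divisor, and nonzero, because $\pi$ is dominant and ${\rm supp}\,A$ is a proper nonempty subvariety of $\sW$. As $X$ has Picard number $1$, $D$ is therefore ample, so $D\cdot P_a>0$. On the other hand ${\rm supp}\,D\cap X^{\circ}=\pi^{-1}({\rm supp}\,A)$ is disjoint from $P_a$ because $\pi(P_a)=\{a\}\not\subset{\rm supp}\,A$, and $P_a\subset X^{\circ}$; hence $P_a\cap{\rm supp}\,D=\emptyset$ and $D\cdot P_a=0$, a contradiction. (Equivalently, $\pi^{*}\mathcal{O}_{\sW}(A)|_{P_a}$ is trivial since $P_a$ is a fibre of $\pi$.) In summary, the hard part of the proof is the bookkeeping with the universal family — normalization, purity, and the dimension estimate ensuring a general member avoids the codimension-$2$ bad locus — after which the Picard-number-$1$ hypothesis closes the argument in one line.
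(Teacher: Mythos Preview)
Your argument is correct. The paper offers no proof beyond the remark ``immediate from Definition \ref{d.etale} (ii),'' so your write-up is a careful fleshing-out of what the author regards as evident: condition (ii) forces $\mu_{\sW}$ to be \'etale over $\sW^{\rm etale}$, so if it were also birational one obtains (after normalization and Zariski's main theorem) a genuine morphism $\pi:X^{\circ}\to\sW$ with ${\rm codim}(X\setminus X^{\circ})\geq 2$ whose general fibre is a regular member, and then the pull-back of an ample divisor from $\sW$ gives a nonzero effective divisor disjoint from that fibre, contradicting Picard number $1$.

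The only place where you do real work beyond the paper's implicit reasoning is the dimension count ensuring a general member $P_a$ misses $X\setminus X^{\circ}$, and your treatment there is sound: divisorial components of $\widetilde{\mu}^{-1}(X\setminus X^{\circ})$ are contracted, hence lie in the ramification locus, hence (by (ii)) map under $\widetilde{\rho}$ into the proper closed set $\sW\setminus\sW^{\rm etale}$; lower-dimensional components cannot dominate $\sW$ for trivial reasons. This is exactly the step where the \'etale hypothesis is used, and your counterexample (lines through a fixed point of $\BP^N$) correctly identifies why it cannot be dropped. In short, you and the author are taking the same route; you have simply paved it.
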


The next lemma is proved in Proposition 6 of \cite{HM03}. It follows from the fact that the base-change of an \'etale morphism is also an \'etale morphism.

\begin{lemma}\label{l.coveretale}
Let $f: Y \to X$ be a generically finite morphism between two projective manifolds. If $\sW$ is an \'etale web of smooth curves on $X$, then $f^*\sW$ is an \'etale web of smooth curves on $Y$. \end{lemma}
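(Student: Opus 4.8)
The plan is to verify for $f^*\sW$ the two conditions of Definition~\ref{d.etale}; the structural input that drives everything is that the universal family morphism of $\sW$ becomes \'etale once restricted to the regular members. Precisely, put $\sU:=\rho_{\sW}^{-1}(\sW^{\rm etale})\subseteq{\rm Univ}_{\sW}$. Condition (i) of Definition~\ref{d.etale} says the fibres of $\rho_{\sW}$ over $\sW^{\rm etale}$ are smooth curves, so $\sU$ is smooth of dimension $\dim X$; condition (ii) says $\mu_{\sW}$ is unramified along $\sU$; an unramified morphism between smooth varieties of the same dimension is \'etale, so $\mu_{\sW}|_{\sU}\colon\sU\to X$ is \'etale, with image a dense Zariski open subset of $X$ containing $X_{\rm reg}$. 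Also, for each $a\in\sW^{\rm etale}$ the induced morphism $\rho_{\sW}^{-1}(a)\to P_a$ is an isomorphism of smooth curves.

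The next step is to pull this \'etale structure back along $f$. Form the fibre product $P:=Y\times_X\sU$ with its projections $p\colon P\to Y$ and $q\colon P\to\sU$. Since the base change of an \'etale morphism is \'etale, $p$ is \'etale; as $Y$ is smooth, $P$ is smooth of dimension $\dim Y$, and $p(P)=f^{-1}(\mu_{\sW}(\sU))$ is a dense Zariski open subset $Y^{\circ}\subseteq Y$ over which $p$ is \'etale. Consider also the composite $\rho_{\sW}\circ q\colon P\to\sW^{\rm etale}$: for $a\in\sW^{\rm etale}$ its fibre over $a$ is, using the isomorphism $\rho_{\sW}^{-1}(a)\cong P_a$ and the projection $p$, the scheme-theoretic preimage $f^{-1}(P_a)\subseteq Y^{\circ}$.

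Now I would identify $P$ with the universal family of $f^*\sW$. By Definition~\ref{d.pushpull}(ii), for a general $a$ the irreducible components of $f^{-1}(P_a)$ are exactly the members of $f^*\sW$ through a general point of $Y$, so after deleting proper closed subsets the morphism $p$ is $\mu_{f^*\sW}$ and $q$ realizes the natural map ${\rm Univ}_{f^*\sW}\dasharrow{\rm Univ}_{\sW}$. This yields three things at once. First, $\mu_{f^*\sW}$ is generically finite, so $f^*\sW$ is genuinely a web of curves. Second, $\mu_{f^*\sW}=p$ is \'etale, hence unramified, along every fibre $f^{-1}(P_a)$ with $a\in\sW^{\rm etale}$ (such a fibre lies in $Y^{\circ}$ because $P_a\subseteq\mu_{\sW}(\sU)$); this is condition (ii) of Definition~\ref{d.etale} for $f^*\sW$. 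Third, a member $C'$ of $f^*\sW$ arising from a general $a$ is a connected component of a general fibre of the morphism $\rho_{\sW}\circ q$ between the smooth varieties $P$ and $\sW^{\rm etale}$, so by generic smoothness (we are in characteristic $0$) the fibre $f^{-1}(P_a)$ is smooth, and $C'$ is a smooth curve; this is condition (i). Hence $f^*\sW$ is an \'etale web of smooth curves.

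The geometry used is minimal --- only that \'etaleness is preserved by base change and that a morphism of smooth varieties is generically smooth. The main obstacle is bookkeeping, caused by the fact that $f$ is only generically finite rather than finite or flat: the fibre product $Y\times_X\sU$ and the universal family ${\rm Univ}_{f^*\sW}$ may be reducible and non-normal, the decomposition $f^*\sW={\rm Bir}(f^*\sW)\cup{\rm Mult}(f^*\sW)$ has to be carried through the identification made above, and one must check that neither the (codimension $\geq 2$) locus over which $f$ is not quasi-finite nor the branch divisor of $f$ interferes with a general regular member $P_a$. These are all dealt with by restricting to suitable dense Zariski open subsets, or by passing to normalizations, before running the argument above --- this is the content of the proof of Proposition~6 in \cite{HM03}.
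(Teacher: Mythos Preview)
Your proof is correct and follows exactly the approach the paper indicates: the paper gives no independent argument but simply cites Proposition~6 of \cite{HM03} and remarks that the key input is that the base-change of an \'etale morphism is \'etale, which is precisely the mechanism you spell out. You have essentially reconstructed the argument the paper defers to that citation, including the same closing reference.
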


\begin{proposition}\label{p.surface}
On a nonsingular projective surface, an irreducible \'etale web of smooth curves must be univalent. \end{proposition}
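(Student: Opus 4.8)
The plan is to show that the universal family morphism $\mu_{\sW} \colon \mathrm{Univ}_{\sW} \to X$ has degree one for an irreducible \'etale web $\sW$ of smooth curves on a nonsingular projective surface $X$. Suppose, to the contrary, that $\deg \mu_{\sW} = d \geq 2$. Since $\sW$ is a web of curves on a surface, $\dim \sW = 1$, so $\sW$ is a curve (and by Definition \ref{d.etale} its smooth locus $\sW^{\mathrm{etale}}$ is dense); $\mathrm{Univ}_{\sW}$ is then a surface, and $\rho_{\sW} \colon \mathrm{Univ}_{\sW} \to \sW$ has one-dimensional fibers — each a smooth curve over $\sW^{\mathrm{etale}}$ — while $\mu_{\sW}$ is generically finite of degree $d$. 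The first step is to pass to a smooth projective model: replace $\mathrm{Univ}_{\sW}$ by a resolution and $\sW$ by its normalization, so that we have a smooth projective surface $\widehat{A}$ with a fibration $\rho \colon \widehat{A} \to \widehat{\sW}$ onto a smooth curve whose general fiber is a smooth curve $P_a$, together with a generically finite degree-$d$ morphism $\mu \colon \widehat{A} \to X$.

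The heart of the argument is to exploit the \'etale/unramified condition (ii) of Definition \ref{d.etale} together with a normal-bundle computation. A regular member $P_a \subset X$ has trivial normal bundle, hence self-intersection zero, $P_a \cdot P_a = 0$ in $X$; in particular, through a general point $x \in X$ the $d$ members of $\sW$ are pairwise distinct smooth curves with zero self-intersection and zero mutual intersection at $x$ (the leaves are transversal by the web-structure, but here we want more: they should actually be disjoint away from finitely many base curves). Concretely, I would fix a general member $P_{a_0}$ and consider the divisor class it sweeps out; the key point is that the family $\{P_a\}_{a \in \sW}$ gives a pencil-like structure whose members are algebraically equivalent, move in a one-dimensional family, and cover $X$, so $[P_a]$ is a nef class with $[P_a]^2 = 0$. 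A nef class of square zero on a surface that moves in a covering family defines (after removing the fixed part, which is empty here since the members are irreducible and cover $X$) a morphism $X \to B$ to a smooth curve $B$ whose fibers are exactly the members $P_a$ — this is the standard fact that a base-point-free one-dimensional system with $P_a^2 = 0$ on a surface is composed with a pencil. But then $\sW = \mathrm{Fib}(g)$ for the induced fibration $g \colon X \to B$, which forces $\mu_{\sW}$ to be birational, i.e. $d = 1$, a contradiction.

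The step I expect to be the main obstacle is establishing rigorously that the covering family $\{P_a\}$ with $P_a^2 = 0$ really is composed with a pencil, i.e. that two distinct general members are disjoint rather than merely meeting transversally in the local web-structure. Here is where condition (ii) of Definition \ref{d.etale} is essential: if $P_a$ and $P_b$ met at a point $x$ for $a \neq b$, then near $x$ the morphism $\mu_{\sW}$ would have two sheets of $\rho_{\sW}^{-1}(\sW^{\mathrm{etale}})$ mapping to overlapping curves, and one checks using the unramifiedness of $\mu_{\sW}|_{\rho_{\sW}^{-1}(O_a)}$ and the triviality of the normal bundle that this is incompatible with $P_a \cdot P_b = 0$ unless $P_a = P_b$ as curves — in other words the intersection number already being zero, any actual intersection point would have to be a point where two deformations of the \emph{same} member collide, contradicting that $a$ and $b$ lie in the \'etale locus. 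Once the general members are pairwise disjoint, the existence of the fibration $g \colon X \to B$ with $\mathrm{Fib}(g) = \sW$ follows from Stein factorization of a suitable morphism, or directly from the elementary theory of pencils on surfaces, and univalence of $\sW$ is then automatic by Definition \ref{d.web} (iv). I would also double-check the edge case where $X$ itself could be covered by the family in a multivalent way only if the curve $\sW$ maps to $B$ with degree $d$; but then $B$ would admit a degree-$d$ cover by $\sW$ and the fibers of $g \circ \mu$ versus $\rho$ would disagree, which is exactly ruled out by the \'etale condition, closing the argument.
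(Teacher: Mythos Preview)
Your proposal is correct, and in fact the contradiction you identify in your ``main obstacle'' paragraph \emph{is} the entire proof; everything surrounding it---the resolution $\widehat{A}$, the construction of the fibration $X\to B$, the Stein factorization, the edge case about $\sW\to B$ having degree $d$---is unnecessary scaffolding. The paper's argument is just two lines: a regular member $C$ has trivial normal bundle, so $C\cdot C=0$; if the web were not univalent there would be two distinct members $C\neq C'$ through a general point, hence $C\cdot C'>0$; but $C$ and $C'$ lie in the same irreducible component of $\mathrm{Chow}^1(X)$, so they are algebraically equivalent and $C\cdot C'=C\cdot C=0$, a contradiction. You have all of these ingredients (you note $[P_a]^2=0$, that the members are algebraically equivalent, and that an actual intersection point would force $P_a\cdot P_b>0$), but you present the key step as a lemma en route to building a pencil rather than as the finishing blow. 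You also lean on the unramifiedness condition (ii) of Definition~\ref{d.etale} more than is needed: the only role of the \'etale hypothesis here is to guarantee the trivial normal bundle, hence $C^2=0$; once you have that, pure intersection theory on the surface does the rest. The upshot is that your detour through ``composed with a pencil'' proves a slightly stronger statement (the web is literally $\mathrm{Fib}(g)$ for some $g$), but at the cost of obscuring how little is actually required.
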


\begin{proof}
A regular member $C$ of  an \'etale web of smooth curves on a nonsingular projective surface satisfies $C\cdot C =0$ because $C$ has trivial normal bundle, as explained in Definition \ref{d.etale}.
If the web is not univalent, two distinct members $C$ and $ C^{'}$ through a general point satisfy $C \cdot C^{'} >0$.
It follows that $C$ and $C^{'}$ cannot belong to the same component of the web, i.e., the web cannot be irreducible.
\end{proof}

\begin{proposition}\label{p.infty}
Let $\sW$ be an \'etale web of smooth curves on a projective manifold $X$. Fix an  irreducible component $\sV$ of $\sW$ and let $f: A={\rm Univ}_{\sV} \to X$ and $g: A={\rm Univ}_{\sV} \to \sV$ be the universal family morphisms. The web $f^*\sW$ on $A$ has the subweb  ${\rm Fin}(g)$ from Definition \ref{d.ram} and the subweb ${\rm Mult}(f^*\sW)$ of $f^* \sW$ from Definition \ref{d.pushpull}.
Then  ${\rm Mult}(f^*\sW) \subset {\rm Fin}(g)$. \end{proposition}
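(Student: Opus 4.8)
The plan is to play the two decompositions of the web $f^*\sW$ on $A$ against each other. Recall from Definition \ref{d.ram} that $f^*\sW = {\rm Fib}(g) \cup {\rm Hor}(g)$ and ${\rm Hor}(g) = {\rm Fin}(g) \cup {\rm Inf}(g)$, while Definition \ref{d.pushpull} gives $f^*\sW = {\rm Bir}(f^*\sW) \cup {\rm Mult}(f^*\sW)$; in each of these three equalities the two summands share no irreducible component. Hence it suffices to prove the two inclusions ${\rm Fib}(g) \subset {\rm Bir}(f^*\sW)$ and ${\rm Inf}(g) \subset {\rm Bir}(f^*\sW)$. Granting these, an irreducible component $\sH$ of ${\rm Mult}(f^*\sW)$ is a component of $f^*\sW$ that is not contained in ${\rm Bir}(f^*\sW)$, hence coincides with neither ${\rm Fib}(g)$ nor any component of ${\rm Inf}(g)$; since $\sH$ is a component of ${\rm Hor}(g) = {\rm Fin}(g) \cup {\rm Inf}(g)$, it must be a component of ${\rm Fin}(g)$.

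The first inclusion is immediate from the étale hypothesis: a general member of ${\rm Fib}(g)$ is a general fiber of $g = \rho_\sV$, that is, $\rho_\sW^{-1}(a)$ for a general $a \in \sV \subset \sW$, and since $\sW$ is an étale web of smooth curves this fiber is carried isomorphically by $f$ onto the regular member $P_a \subset X$, which is a member of $\sW$ (Definition \ref{d.etale}). Thus $f$ maps a general member of ${\rm Fib}(g)$ birationally onto a member of $\sW$, so ${\rm Fib}(g) \subset {\rm Bir}(f^*\sW)$.

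The substance is the second inclusion. Let $\sH$ be a component of ${\rm Inf}(g)$, let $C$ be a general member of $\sH$, and set $\Gamma := g(C) \subset \sV$, a general member of $g_*{\rm Hor}(g)$. By the definition of ${\rm Inf}(g)$ (Definition \ref{d.ram}(iii); compare Proposition \ref{p.integrable}) the map $[C'] \mapsto [g(C')]$ from $\sH$ to $g_*{\rm Hor}(g)$ has positive-dimensional fibers, so the members $C'$ of $\sH$ with $g(C') = \Gamma$ form a positive-dimensional family of distinct curves inside the surface $\Sigma_\Gamma := \overline{g^{-1}(\Gamma)}$, and therefore cover $\Sigma_\Gamma$; in particular $C$ is a general member of a covering family of $\Sigma_\Gamma$. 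The key assertion is that $f$ restricts to a birational morphism of $\Sigma_\Gamma$ onto its image $\Sigma := f(\Sigma_\Gamma) = \bigcup_{b \in \Gamma} P_b$, which is genuinely a surface, $f|_{\Sigma_\Gamma}$ being generically finite as a restriction of the generically finite $f$. Granting this, $C$ is not contained in the locus — proper, closed, hence at most a curve — along which $f|_{\Sigma_\Gamma}$ fails to be injective, so $f|_C$ is birational onto $f(C)$; and $f(C)$ is exactly the member of $\sW$ of which $C$ is a component of the $f$-preimage, so $\sH \subset {\rm Bir}(f^*\sW)$. One sees here why the statement allows ${\rm Mult}(f^*\sW)$ to meet ${\rm Fin}(g)$: for a component of ${\rm Fin}(g)$ there are only finitely many members over a fixed $\Gamma$, and these may well lie inside the non-injective locus of $f|_{\Sigma_\Gamma}$, which is precisely how a multiple cover arises.

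Justifying the birationality of $f|_{\Sigma_\Gamma}$ is the step I expect to be the main obstacle. The surface $\Sigma_\Gamma = \overline{g^{-1}(\Gamma)}$ is ruled over $\Gamma$ by the fibers $F_b = g^{-1}(b)$, and — taking $\Gamma$ inside $\sV^{\rm etale}$, as one may for $\Gamma$ general — $f$ carries each $F_b$ isomorphically onto the regular member $P_b$. On the target side the members $P_b$, $b \in \Gamma$, sweep out $\Sigma$ in a one-dimensional family, and each has trivial normal bundle in $X$; pulling the normal bundle sequence back to a resolution $\widetilde{\Sigma} \to \Sigma$ and comparing with the normal bundle of the strict transform $\widetilde{P}_b$ yields $\deg N_{\widetilde{P}_b/\widetilde{\Sigma}} \le 0$, while $\widetilde{P}_b$ moving in a covering family gives $\deg N_{\widetilde{P}_b/\widetilde{\Sigma}} \ge 0$; hence $N_{\widetilde{P}_b/\widetilde{\Sigma}} = \sO$, so $\widetilde{P}_b \cdot \widetilde{P}_b = 0$, the $\widetilde{P}_b$ are pairwise disjoint, and they constitute a second ruling $\widetilde{\Sigma} \to \Gamma$. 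Resolving the map induced by $f$ on $\Sigma_\Gamma$, one obtains a morphism which intertwines the $\Gamma$-ruling of $\Sigma_\Gamma$ with the $\Gamma$-ruling of $\widetilde{\Sigma}$, induces the identity on the base $\Gamma$, and is birational on the fibers (via the isomorphisms $F_b \to P_b$); such a morphism has degree one, so $f|_{\Sigma_\Gamma}$ is birational onto $\Sigma$. The degree computation is formal once the two rulings are in hand; the delicate point is the bookkeeping with the singularities of $\Sigma$ and the resolution that makes the normal-bundle comparison legitimate.
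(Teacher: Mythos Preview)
Your proof is correct and follows essentially the same approach as the paper's: both reduce to showing that $f$ restricted to the surface $\Sigma_\Gamma = \overline{g^{-1}(\Gamma)}$ is birational onto its image, and both deduce this from the fact that the regular members $P_b$ have trivial normal bundle in $X$, hence self-intersection $0$ on a resolution of the image surface. The paper packages your final paragraph as a standalone result (Proposition~\ref{p.surface}: an irreducible \'etale web of smooth curves on a nonsingular projective surface is univalent) and then runs the argument in contrapositive form, assuming a common component of ${\rm Inf}(g)$ and ${\rm Mult}(f^*\sW)$ and contradicting univalence; your direct version, together with the explicit check that ${\rm Fib}(g)\subset{\rm Bir}(f^*\sW)$, is logically equivalent.
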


\begin{proof}
Assuming that we have a common irreducible subweb $\sJ$ of ${\rm Inf}(g)$ and ${\rm Mult}(f)$, we will derive a contradiction.
 Let $J \subset A$ be a general member of $\sJ$ and let $W = g(J) \subset \sV$.
Since $J$ is a member of ${\rm Inf}(g)$, we have a family of distinct members $\{ J_t, t \in \Delta, J= J_0\}$ of $\sJ$ such that $W = g(J_t)$ for all $t\in \Delta$. Write $W^o = W \cap \sW^{\rm etale}.$ Let $S \subset X$ be the closure of $f(g^{-1}(W^o)).$
Then $W^o$ determines a web of curves on $S$ whose general members are
given by the two morphisms $\rho_{W^o}: g^{-1}(W^o) \to W^o$ and $\mu_{W^o}: g^{-1}(W^o) \to S$ obtained by the restrictions of $g$ and $f$. Choose a desingularization $\nu: \widetilde{S} \to S$. Then the property (ii) of Definition \ref{d.etale} is preserved under the pull-back of $\mu_{W^o}$ by $\nu$. It follows that $\rho_{W^o}$ and $\mu_{W^o}$ give rise to an irreducible \'etale web of smooth curves on $\widetilde{S}$.
By Proposition \ref{p.surface}, this web must be univalent.
This means that the morphism $\mu_{W^o}: g^{-1}(W^o) \to S$ is birational.
 But the closure of the surface $g^{-1}(W^o)$
is covered by the members $\{ J_t, t \in \Delta\}$ of $\sJ$. Since the restriction of $f$ to each $J_t$ is not birational by the assumption
 $\sJ \subset {\rm Mult}(f)$, the morphism $\mu_{W^o}$ cannot be birational,  a contradiction. \end{proof}

\begin{proposition}\label{l.nbirational}
Let $X$ be a simply connected projective manifold of Picard number 1 equipped with an \'etale web $\sW$ of smooth curves.
Let $M$ be a projective variety and let $p: M \to X$ be a generically finite morphism which is not birational.  Then
${\rm Mult}(p^*\sW)$  is not empty. \end{proposition}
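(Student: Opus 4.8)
The plan is to show that a generically finite, non-birational $p$ is forced to ramify over an ample divisor, and that the set-theoretic preimage of a member of $\sW$ through the general point of this divisor acquires a branch whose degree over its image equals the local ramification index; such branches sweep out a whole component of $p^{*}\sW$, which therefore lies in ${\rm Mult}(p^{*}\sW)$. First I would reduce to the case $M$ smooth, replacing $M$ by a resolution of singularities and $p$ by the induced morphism: the resolution being birational, the components of $p^{*}\sW$ correspond before and after with the same degrees over $\sW$ (the new, exceptional components of preimages of general members are curves contracted by the new morphism, hence not members of the pulled-back web). With $M$ smooth, $p$ must ramify: if $p$ were unramified its fibres would be finite, so $p$ would be finite (being proper), hence flat (a finite morphism from a Cohen--Macaulay variety onto the regular $X$), hence \'etale, and then an isomorphism, since a connected finite \'etale cover of the simply connected $X$ is trivial --- contradicting non-birationality. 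Let $X_{0}\subset X$ be the open set over which $p$ is finite; its complement has codimension $\geq 2$ because $M$ is irreducible, and over $X_{0}$ the map $p$ is finite flat between smooth varieties. Were it \'etale over $X_{0}$ it would extend, by purity of the branch locus, to a finite \'etale cover of $X$, again forcing $p$ birational. So $p$ ramifies over $X_{0}$ along a divisor, and the closure $B\subset X$ of its image is a non-empty effective divisor, hence ample with all components ample (Picard number one); in particular $B$ and each of its components meet every member of $\sW$ positively.

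The heart of the matter is local. Fix an irreducible component $\sV$ of $\sW$ and a component $B_{1}$ of $B$ over whose general point $p$ ramifies, say with index $e\geq 2$, and let $R_{1}\subset M$ be the corresponding component of the ramification divisor. Near a general point $\tilde x$ of $R_{1}$, with $x=p(\tilde x)\in B_{1}$, the morphism $p$ has the analytic normal form $(w_{1},w')\mapsto(w_{1}^{e},w')$ in suitable coordinates, with $B_{1}=\{z_{1}=0\}$ locally. By Lemma~\ref{l.positive} applied to the hypersurface $B_{1}$, a general member $C$ of $\sV$ is smooth, meets $B_{1}$ transversally, and does so only at points over which this normal form is available; transversality to $B_{1}$ at such a point $x$ forces $C$ to be parametrised near $x$ by the coordinate $z_{1}$, so the branch $C'$ of $p^{-1}(C)$ through the ramification point $\tilde x$ above $x$ is the smooth curve $w_{1}\mapsto(w_{1}^{e},\dots)$ and maps onto $C$ with degree $\geq e\geq 2$. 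This $C'$ is a member of $p^{*}\sV\subset p^{*}\sW$.

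Finally I would run a dimension count. As $\tilde x$ ranges over $R_{1}$, of dimension $\dim X-1$, the curves $C'$ form a constructible family of the same dimension (each $C'$ determines $C=p(C')$ and, up to finitely many choices, the point $\tilde x\in C'\cap R_{1}$), and the corresponding members $C\in\sV$ fill a dense subset of $\sV$, because the members of $\sV$ through a general point of the divisor $B_{1}$ are already dense in $\sV$. Hence the $C'$ are dense in some irreducible component $\sC'$ of $p^{*}\sV$; being constructible of full dimension they contain a dense open subset of $\sC'$, so a general member of $\sC'$ is such a $C'$ and maps onto its image in $\sW$ with degree $\geq 2$. Therefore $\sC'\subset{\rm Mult}(p^{*}\sW)$, and this set is non-empty.

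I expect the main obstacle to be the local ramification analysis: justifying the normal form of $p$ along a general point of its ramification divisor and deducing that transversality of $C$ to $B_{1}$ yields a single branch of degree exactly $e$ over $C$ (rather than several lower-degree branches). The reductions to $M$ smooth and to $B$ a non-empty ample divisor, together with the final dimension count, should be essentially routine.
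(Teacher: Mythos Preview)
Your argument is correct and follows the same overall strategy as the paper: reduce to $M$ smooth, use simple connectedness of $X$ to produce a nonempty branch divisor (ample since the Picard number is $1$), and show that members of $p^*\sW$ passing through the ramification divisor over it fail to be birational onto their images.

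The execution differs in one respect worth noting. You work downstairs on $X$: you pick a component $\sV$ of $\sW$, apply Lemma~\ref{l.positive} on $X$ to get transversality of a general $C\in\sV$ with the branch component $B_1$, invoke the local normal form $(w_1,w')\mapsto (w_1^e,w')$ to produce a ramified branch $C'$ of $p^{-1}(C)$, and then run a dimension count to show these $C'$ fill a whole component of $p^*\sW$. The paper instead works upstairs on $M$: by Lemma~\ref{l.coveretale} the pullback $p^*\sW$ is itself an \'etale web on $M$, so one can pick directly a component $\sV$ of $p^*\sW$ whose members meet the ramification component $R$ (one member suffices, by constancy of intersection numbers in the family) and apply Lemma~\ref{l.positive} on $M$ to get a general $C\in\sV$ meeting $R$ transversally at a smooth point $y$; a second application of Lemma~\ref{l.positive} on $X$ gives that $p(C)$ meets $H=p(R)$ transversally at $p(y)$, and comparing the two transversalities forces $p|_C$ to be ramified at $y$. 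This bypasses both the explicit normal form and the dimension count. Your concern about justifying the normal form is legitimate but standard (generic \'etaleness of $p|_{R_1}$ over $B_1$ suffices); the paper's route simply avoids needing it.
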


\begin{proof}
We may assume that $M$ is nonsingular by taking desingularization.
Let $H \subset X$ be an irreducible component of the reduced branch divisor of $p$, which is nonempty by the assumptions that $X$ is simply connected and $p$ is not birational.
Let $R \subset M$ be an irreducible component of the ramification divisor of $p$ such that $p(R) = H$. Since $H$ is ample, there exists an irreducible component $\sV$ of  $p^*\sW$ whose members have positive intersection with $R$. Since $\sV$ is an \'etale web of smooth curves by Lemma \ref{l.coveretale}, a general member $C$ of $\sV$ intersects $R$ transversally by Lemma \ref{l.positive} at a nonsingular point $y$ of $R$. As $p(C)$ is a general member of $p_*\sV \subset \sW$,  it intersects $H$ transversally at a nonsingular point $p(y) \in H$, by Lemma \ref{l.positive} again.   This implies that $p|_C: C \to p(C)$ is ramified at $y$, hence is not birational.  It follows that $\sV \subset {\rm Mult}(p^*\sW)$. \end{proof}

\begin{proposition}\label{p.NBir}
Let $\sW$ be an \'etale web of smooth  curves on a simply connected projective manifold $X$  of Picard number 1.
Fix an  irreducible component $\sV$ of $\sW$ and let $f: {\rm Univ}_{\sV} \to X$ and $g: {\rm Univ}_{\sV} \to \sV$ be the universal family morphisms.
Then ${\rm Mult}(f^*\sW) \neq \emptyset$ and the web $\sW$ is pairwise non-integrable.
 \end{proposition}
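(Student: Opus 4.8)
The plan is to deduce Proposition \ref{p.NBir} from Proposition \ref{l.nbirational}, Proposition \ref{p.infty}, and Corollary \ref{c.nonintegrable}. First I would observe that $f = \mu_{\sV}$ cannot be birational: an \'etale web of smooth curves on a projective manifold of Picard number $1$ is not univalent by Lemma \ref{l.trivial}, so in particular the restriction $\mu_{\sW}|_{{\rm Univ}_{\sV}} = f$ of the universal morphism to this component is generically finite but not birational onto $X$. Here one should be slightly careful: Lemma \ref{l.trivial} is stated for an \'etale web, and one must check that the relevant notion of ``univalent'' applies to the component $\sV$, i.e. that $\deg f \geq 2$; this follows since every member of $\sW$ meeting $X_{\rm reg}$ is a regular member with trivial normal bundle, so through a general point of $X$ there pass $\deg\mu_{\sW} \geq 2$ members, and (after possibly re-indexing) at least one component $\sV$ has $\deg f \geq 2$ — and the statement is about an arbitrary fixed component, so in fact every component must be non-univalent by the same Picard-number-$1$ argument applied to $\sV$ alone viewed as an irreducible \'etale web (Proposition \ref{p.surface} is not needed here, only Lemma \ref{l.trivial}).

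Next, applying Proposition \ref{l.nbirational} with $M = {\rm Univ}_{\sV}$ and $p = f$ (a generically finite, non-birational morphism to the simply connected Picard-number-$1$ manifold $X$), we conclude ${\rm Mult}(f^*\sW) \neq \emptyset$. This is the first assertion.

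For the second assertion, combine ${\rm Mult}(f^*\sW) \neq \emptyset$ with Proposition \ref{p.infty}, which gives ${\rm Mult}(f^*\sW) \subset {\rm Fin}(g)$; hence ${\rm Fin}(g) \neq \emptyset$. Now this argument applies verbatim to \emph{any} irreducible component $\sV$ of $\sW$ in place of the fixed one — $X$ remains simply connected of Picard number $1$, and $f = \mu_{\sV}$ is always non-birational by the first part — so ${\rm Fin}(g) \neq \emptyset$ for every choice of $\sV$ as in Definition \ref{d.ram}. By Corollary \ref{c.nonintegrable}, this is precisely the statement that $\sW$ is pairwise non-integrable. I do not expect a serious obstacle here; the only delicate point is making sure the non-birationality of each $f = \mu_{\sV}$ is genuinely in place before invoking Proposition \ref{l.nbirational}, since that proposition has non-birationality as a hypothesis rather than a conclusion — but Lemma \ref{l.trivial}, applied to the irreducible \'etale web $\sV$, supplies exactly this.
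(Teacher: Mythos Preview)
Your proposal is correct and follows essentially the same route as the paper: non-univalence of $\sV$ (Lemma \ref{l.trivial}) gives that $f$ is not birational, then Proposition \ref{l.nbirational} yields ${\rm Mult}(f^*\sW)\neq\emptyset$, Proposition \ref{p.infty} gives ${\rm Fin}(g)\neq\emptyset$, and Corollary \ref{c.nonintegrable} finishes. Your extra care in noting that the argument applies to \emph{every} component $\sV$ (as required by Corollary \ref{c.nonintegrable}) is appropriate, though the paper leaves this implicit since $\sV$ was arbitrary from the start.
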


 \begin{proof}
 Since $X$ is nonsingular of Picard number 1, it cannot have a univalent \'etale web of smooth curves.
 It follows that the morphism $f: {\rm Univ}_{\sV} \to X$ is not birational.
By Proposition \ref{l.nbirational}, we see that ${\rm Mult}(f^* \sW) \neq \emptyset$.  Then Proposition \ref{p.infty} shows ${\rm Fin}(g) \neq \emptyset$, hence $\sW$ is pairwise non-integrable by Corollary \ref{c.nonintegrable}.
 \end{proof}

The proof of the following proposition is essentially the same as that of  Lemma 3.1 of \cite{HM01}.

\begin{proposition}\label{p.chain}
Any \'etale web of smooth curves on a projective manifold of Picard number 1 is bracket-generating. \end{proposition}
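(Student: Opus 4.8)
The plan is to reduce the bracket-generating property to the connectedness of a suitable chain-space, exactly as in Lemma 3.1 of \cite{HM01}. Let $\sW$ be an \'etale web of smooth curves on a projective manifold $X$ of Picard number $1$, and let $D^{\sW} \subset T(X_o)$ be the distribution spanned by the web-structures, with associated foliation ${\rm Fol}^{\sW}$ on ${\rm dom}(D^{\sW})$ as in Definition \ref{d.B}. By Proposition \ref{p.sat}, it suffices to show that the only saturated subvariety (with respect to $\sW$) passing through a general point of $X$ is $X$ itself. So fix a general point $x \in X$ and let $Z \subset X$ be the minimal saturated subvariety $S_x$ through $x$ produced in Proposition \ref{p.sat}; I want to prove $Z = X$.

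The key geometric input is that $Z$ is, by construction, the closure of the set of points reachable from $x$ by \emph{chains} of members of $\sW$: indeed $S^1_x = I_x$ is a union of members through $x$, $S^2_x$ is obtained by attaching members of $\sW$ at points of $S^1_x$, and so on. Since each irreducible component of $Z$ contains $x$ and has dimension strictly increasing at each step until it stabilizes (Definition \ref{d.sat}), $Z$ is an irreducible subvariety that is a union of members of $\sW$ and is \emph{closed} under the operation of attaching members of $\sW$ that meet it. The strategy is now: if $Z \neq X$, derive a contradiction with Picard number $1$. Consider the incidence variety parametrizing connected chains of members of $\sW$ emanating from $x$; its image in $X$ is $Z$. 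The saturation property says that any member of $\sW$ meeting $Z$ is contained in $Z$. Let $\sV$ be an irreducible component of $\sW$, with universal family $f : {\rm Univ}_{\sV} \to X$ and $g : {\rm Univ}_{\sV} \to \sV$. Since $\mu_{\sW}$ (hence $f$) is surjective, there is a component $\sV$ of $\sW$ whose general member meets $Z$; by saturation every such member lies in $Z$. Running this over all components and using that $X$ is covered by $\sW$, one sees that $Z$ is swept out by an entire covering subfamily of $\sW$.

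Now invoke the hypothesis. Because $X$ has Picard number $1$, a proper subvariety $Z \subsetneq X$ cannot be stable under attaching curves of a covering family: more precisely, let $H \subset X$ be an ample divisor; if $Z \neq X$ then, arguing as in Lemma \ref{l.positive}, a general member of a suitable component $\sV$ of $\sW$ meeting $Z$ must exit any fixed proper subvariety, because members of an \'etale web have trivial normal bundle and therefore move freely, so the family of members through a general point of $Z$ is not confined to $Z$ unless $\dim Z = \dim X$. Concretely: pick a general point $z \in Z$ and a member $C$ of $\sW$ through $z$ with $C \subset Z$; deforming $C$ inside $\sW$ while keeping it through $z$ (possible whenever the fiber of $\rho_{\sW}$ is positive-dimensional — and if it is zero-dimensional, one instead deforms the base point along $C$ and attaches new members at nearby points, using that $f$ is generically unramified), the swept-out locus grows, contradicting $I_Z = Z$ unless it already fills $X$. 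Since $X$ is irreducible and of Picard number $1$, the chain-reachable locus from a general point must be all of $X$; thus $Z = X$, so $\sW$ is bracket-generating.

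The step I expect to be the main obstacle is making the ``chains of curves fill $X$'' argument rigorous when the web is genuinely finite, i.e.\ when $\rho_{\sW}$ has $0$-dimensional fibers so one cannot deform a member while fixing a point on it. The resolution — the point where the \'etale hypothesis is essential — is that the unramifiedness in Definition \ref{d.etale}(ii) lets one propagate: attaching members at a moving sequence of base points along an already-constructed chain produces a positive-dimensional family of chains whose endpoints sweep out a variety of strictly larger dimension, unless that variety is already $X$. Combined with Proposition \ref{p.sat}, which translates ``cannot enlarge a saturated set'' into ``the foliation has full rank'', this yields bracket-generation. The remaining details are the standard chain-connectedness bookkeeping of \cite{HM01}, which I would carry out by induction on $\dim S^i_x$ as in the proof of Proposition \ref{p.sat}.
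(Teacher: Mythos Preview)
Your reduction via Proposition \ref{p.sat} to showing that the minimal saturated subvariety $Z = S_x$ through a general point equals $X$ is correct, and this is also where the paper's proof begins. But from that point on your argument has a genuine gap: you try to derive a contradiction from a \emph{single} saturated $Z \subsetneq X$, and the mechanisms you invoke do not work against saturation. By definition, every member of $\sW$ through a point of $Z \cap T$ is contained in $Z$; so ``attaching members at nearby base points'' cannot enlarge $Z$, and ``members move freely because the normal bundle is trivial'' only says they move in $X$, not that they leave $Z$. Your appeal to Lemma \ref{l.positive} is also misplaced at this stage: that lemma concerns intersections with a \emph{hypersurface}, whereas $Z$ has no reason to be a hypersurface.

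The missing idea, which is the heart of the paper's proof, is to use not one $S_x$ but the whole family $\{S_x : x \in X\}$. If $\sW$ is not bracket-generating, these saturated subvarieties are proper and cover $X$; they give an irreducible subvariety of the Hilbert scheme, and by slicing this family appropriately one obtains a \emph{hypersurface} $H \subset X$ that is a union of saturated subvarieties. Now Picard number $1$ forces every member of $\sW$ to meet $H$ positively, and Lemma \ref{l.positive} (which now applies) produces a member $P_a$ meeting $H$ \emph{transversally} at a point $b$ lying on some saturated piece $S \subset H$. Transversality gives $P_a \not\subset S$ while $b \in P_a \cap S$, contradicting that $S$ is saturated. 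Your chain-connectedness bookkeeping does not substitute for this hypersurface step; it essentially re-derives the construction of $S_x$ already contained in Proposition \ref{p.sat} without adding the new ingredient.
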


\begin{proof}
Suppose that there exists a projective manifold $X$ of Picard number 1 with an \'etale web $\sW$ of smooth curves which is not bracket-generating. By Proposition \ref{p.sat},   there exists an irreducible subvariety $\sH$ of the Hilbert scheme of $X$ whose general member is a  saturated  subvariety of $X$ of dimension strictly smaller than $X$ and whose members cover the whole $X$. Then by choosing a suitable subvariety of $\sH$, we obtain a hypersurface $H \subset X$ which is the closure of the union of some collection of saturated subvarieties of $X$. Since $X$ is of Picard number 1, members of each irreducible component $\sV$ of $\sW$ have positive intersection number with $H$. From Lemma \ref{l.positive}, we have a Zariski open subset $H^{\sV} \subset H$, such that for any $b \in H^{\sV}$, we have a member $P_a$ of $\sV$ intersecting $H$ transversally at $b$.
By our construction of $H$, we have a saturated subvariety $S \subset H$ with $ S \cap H^{\sV} \neq \emptyset.$ Pick a general
point $b \in S$. Then we have $P_a$ with   $b \in P_a  \cap S$ and $P_a \not\subset S$, which means that $S$ is not saturated, a contradiction.
\end{proof}

The following is well-known. We will give a proof  for the reader's convenience.

\begin{proposition}\label{p.example}
Let $\ell >0$ be a fixed integer.
Let $X \subset \BP^N$ be a projective submanifold such that there are nonempty, but only finitely many smooth rational curves of degree $\ell$   through a general point of $X$.
 Let $\sW$ be the web of curves on $X$ such that  members of $\sW$ through a general point $x \in X$ are exactly  smooth rational curves of degree $\ell$ on $X$ through $x$.  Then $\sW$ is an \'etale web of smooth  curves on $X$. If, furthermore, the Picard number of $X$ is 1, then $X$ is a Fano manifold and any \'etale web of smooth curves on $X$ is a subweb of $\sW$.   \end{proposition}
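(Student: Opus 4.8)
The plan is to establish the three assertions in order. For the first, that $\sW$ is an \'etale web of smooth curves, I would use the criterion recalled right after Definition \ref{d.etale}: it suffices to show that a general member $C$ of $\sW$ is smooth and has trivial normal bundle in $X$. Smoothness is part of the definition of $\sW$, so the real point is the triviality of $N_{C/X}$. Writing $n=\dim X$, $f\colon \BP^1\cong C\hookrightarrow X$ for the inclusion, and $N_{C/X}=\bigoplus_{i=1}^{n-1}\sO_{\BP^1}(a_i)$, I would first invoke the standard fact from the deformation theory of rational curves on smooth varieties that a general member of a covering family of rational curves is free; thus $f^*T_X=\sO(2)\oplus N_{C/X}$ is globally generated, so $a_i\ge 0$ for all $i$. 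Next I would fix a general point $x\in C$, say $f(0)=x$, and count the deformations of $f$ keeping $f(0)=x$: because all $a_i\ge 0$ the deformation space $\mathrm{Mor}(\BP^1,X;0\mapsto x)$ is smooth at $[f]$ of dimension $2+\sum_i a_i$, its local dimension being squeezed between $\chi(f^*T_X(-1))$ and $h^0(f^*T_X(-1))$, both equal to $2+\sum_i a_i$. Dividing by the $2$-dimensional stabilizer of $0$ in $\mathrm{Aut}(\BP^1)$, one finds that the members of $\sW$ through $x$ move in a family of dimension $\sum_i a_i$; since $\sW$ is a web this family is finite, so $\sum_i a_i=0$ and hence every $a_i=0$, i.e.\ $N_{C/X}$ is trivial.

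For the remaining two assertions I would assume $\rho(X)=1$ and argue numerically. From the triviality of $N_{C/X}$ for a general member $C$ of $\sW$ we get $-K_X\cdot C=\deg f^*T_X=2>0$; since $-K_X$ and the hyperplane class $H$ of $X\subset\BP^N$ are numerically proportional and both pair strictly positively with $[C]$, the class $-K_X$ is a positive rational multiple of $H$, hence ample, so $X$ is a Fano manifold. Now let $\sW'$ be an arbitrary \'etale web of smooth curves on $X$ and $C'$ a general member of one of its irreducible components, with inclusion $f'\colon C'\hookrightarrow X$. Triviality of $N_{C'/X}$ together with $X$ being Fano gives $0<-K_X\cdot C'=\deg (f')^*T_X=2-2g(C')$, so $g(C')=0$ and $-K_X\cdot C'=2$. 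Writing $-K_X\equiv rH$ in $N^1(X)_\Q$ with $r>0$, intersection with a member of $\sW$ yields $2=r\ell$, so $r=2/\ell$, and then intersection with $C'$ yields $H\cdot C'=\ell$. Hence $C'$ is a smooth rational curve of degree $\ell$ passing through a general point of $X$, so by the defining property of $\sW$ it is a member of $\sW$; as this holds for general members of every component of $\sW'$ and $\sW$ is closed in $\mathrm{Chow}^1(X)$, we conclude $\sW'\subseteq\sW$.

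The main obstacle is the first assertion, and within it the passage from the finiteness of the curves through a general point to the triviality of $N_{C/X}$: this requires both the freeness of a general member (to control the splitting type of $f^*T_X$ on $\BP^1$) and the precise computation of the dimension of the space of deformations fixing a point. Once that is settled, the Fano property and the maximality of $\sW$ among \'etale webs of smooth curves follow by the elementary numerical manipulations above, exploiting that $\rho(X)=1$ makes $-K_X$, $H$, and the classes of the curves in $\sW$ and $\sW'$ all proportional.
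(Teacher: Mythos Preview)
Your proposal is correct and follows essentially the same route as the paper. The only cosmetic difference is that the paper argues the triviality of $N_{C/X}$ via the Hilbert-scheme picture (if some $a_i>0$ then $H^0(C,N_{C/X}\otimes\mathbf m_x)\neq 0$ and $H^1=0$, so $C$ deforms fixing $x$), whereas you phrase the same deformation count through $\mathrm{Mor}(\BP^1,X;0\mapsto x)$; the numerical argument for the Fano property and for $\sW'\subset\sW$ is identical in substance to the paper's, with your $H$ playing the role of the paper's ample generator $L^k$. One tiny imprecision: the splitting $f^*T_X\cong\sO(2)\oplus N_{C/X}$ is not needed and not a priori obvious; what you actually use (and what suffices) is that $N_{C/X}$, as a quotient of the globally generated bundle $f^*T_X$, is itself globally generated.
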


\begin{proof}
It is well-known (see Chapter II of \cite{Ko}) that the normal bundle of a smooth rational curve $C$ of fixed degree $\ell$ through a general point $x$ of the projective manifold $X$ is semi-positive, i.e.,
 $$ N_{C \subset X} \cong \sO(a_1) \oplus \cdots \oplus \sO(a_{n-1}), \ a_i \geq 0, \ n= \dim X.$$
  If $a_i >0$ for some $i$, then denoting ${\bf m}_x$ the maximal ideal at $x \in C$, we have
  $$H^0(C, N_{C \subset X}\otimes {\bf m}_x) \neq 0 \mbox{ while } H^1(C, N_{C \subset X} \otimes {\bf m}_x) = 0.$$
  By the basic deformation theory of submanifolds, this means that we can deform the rational curve $C$ fixing the point $x$.
  This is a contradiction to the assumption that  there are only finitely many smooth rational curves of degree $\ell$ through $x$. It follows that the normal bundle
  $N_{C \subset X}$ is trivial.  This implies that $\sW$ is an \'etale web of smooth rational curves.

  Now assume that the Picard group of $X$ is generated by an ample line bundle $L$. The anti-canonical bundle $K^{-1}_X$ of $X$ is isomorphic to $L^{{\rm i_X}}$ for an integer ${\rm i}_X$ (called the index of $X$) and the hyperplane line bundle of $\BP^N$ restricted to $X$ is isomorphic to $L^{ k}$ for a positive integer $k$.  Then for a general member $C$ of $\sW$,  $$C \cdot L^{k} = \ell \mbox{ and } C \cdot L^{{\rm i}_X} = 2.$$ It follows that ${\rm i}_X = \frac{2k}{\ell}$ and the anti-canonical bundle is ample.  For any \'etale web $\sV$ of smooth curves on $X$, a general member $C'$ of $\sW'$  has trivial normal bundle. Since $C' \cdot K^{-1}_X >0$, we see that $C'$ is a rational curve and $2 = C' \cdot L^{{\rm i}_X}.$  This implies that $C' \cdot L^{k} = \ell$.
  Thus $C'$ belongs to $\sW$.
  \end{proof}

We are ready for the proof of Theorem \ref{t.rational}.

\begin{proof}[Proof of Theorem \ref{t.rational}]
By Proposition \ref{p.example}, the manifold $X$ and $X'$ are Fano manifolds, thus they are simply connected.
From Proposition \ref{p.NBir} and  Proposition \ref{p.chain}, the webs $\sW$ and $\sW'$ satisfy the conditions (B) and (P) of  Theorem \ref{t.main}, from which Theorem \ref{t.rational} follows. \end{proof}

\section{Pleated webs}\label{s.pleat}

\begin{definition}\label{d.def}
Let $\sW$ be a web of curves on a projective variety $X$.  Write $P_a:= \mu_{\sW}(\rho_{\sW}^{-1}(a))$ for the curve in $X$ corresponding to  $a \in \sW$. Let $P_a \neq P_b, a, b \in \sW,$ be two distinct members through a point $x \in X_{\rm reg}$. Denote by $x(a)$ the unique intersection point $\mu_{\sW}^{-1}(x) \cap  \rho_{\sW}^{-1}(a)$ and by $P_b^{x(a)}$ the  unique irreducible component of $\mu_{\sW}^{-1}(P_b)$ through $x(a)$.  The image of the germ of the curve
$x(a) \in P_b^{x(a)}$ under $\rho_{\sW}$, which is a smooth germ of a 1-dimensional complex manifold through $a$ in $\sW$, will be  denoted by ${\rm Def}(P_a;P_b, x)$ and called the {\em deformation of $P_a$ along $P_b$ at $x$.} (The Zariski closure of ${\rm Def}(P_a; P_b, x)$ is the curve $\rho_{\sW}(P_b^{x(a)})$, which may have self-intersection at $a$. )
\end{definition}

\begin{lemma}\label{l.def}
Let $f:Y \to X$ be a generically finite morphism between projective varieties. Let $\sW$ be a web of curves and let
$f_{\flat}: f^*\sW \dasharrow \sW$ be the natural dominant rational map from the web $f^*\sW$ on $Y$ to $\sW$.
Let $Y_{\rm reg} \subset Y$ be the Zariski open subset with respect to $f^*\sW$ defined as in Proposition \ref{p.regular}.
Then there exists a dense Zariski open subset $Y_o \subset Y_{\rm reg} \cap f^{-1}(X_{\rm reg})$  such that
for any $a,b \in f^*\sW,  a \neq b,$ and $P_a \cap P_b \ni y \in Y_o,$
$$f_{\flat} {\rm Def}(P_a; P_b, y)  = {\rm Def}(f(P_a); f(P_b), f(y))$$ where the left hand side means the proper image under $f_{\flat}$.   \end{lemma}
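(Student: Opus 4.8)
The plan is to deduce the lemma by transporting the construction of Definition~\ref{d.def} through the natural generically finite rational map
$$F\ :=\ {\rm univ}_{f^*\sW}\ :\ {\rm Univ}_{f^*\sW}\ \dasharrow\ {\rm Univ}_{\sW}$$
of Definition~\ref{d.pushpull}~(ii). By its construction $F$ is generically finite and fits into the commuting diagrams of rational maps $\mu_{\sW}\circ F = f\circ\mu_{f^*\sW}$ and $\rho_{\sW}\circ F = f_\flat\circ\rho_{f^*\sW}$. The lemma then becomes a matter of chasing germs through $F$ on a suitable Zariski open locus.

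First I would pin down $Y_o$. Since we work in characteristic $0$, generic smoothness provides a dense Zariski open $W\subset{\rm Univ}_{f^*\sW}$ on which $F$ is a morphism that is \'etale onto its image; as $\mu_{f^*\sW}$ is generically finite, $\mu_{f^*\sW}({\rm Univ}_{f^*\sW}\setminus W)$ lies in a proper Zariski closed subset $B_1\subsetneq Y$. Next set
$$\Sigma\ :=\ \overline{\{\,y\in Y:\ \exists\, a\neq b\ \text{in}\ f^*\sW\ \text{with}\ y\in P_a\cap P_b\ \text{and}\ f_\flat(a)=f_\flat(b)\,\}};$$
a dimension count on the incidence variety $\{(y,C):\ y\in\text{a component of}\ f^{-1}(C)\}\subset Y\times\sW$, where the locus having two components of $f^{-1}(C)$ through a single $y$ is $0$-dimensional over $\sW$, shows $\Sigma\neq Y$. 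Finally, Lemma~\ref{l.choose} applied to $f^*\sW$ gives a dense Zariski open $B_2\subset Y$ such that every member of $f^*\sW$ meeting $B_2$ lies in the domain of $f_\flat$. I would then take $Y_o:=(Y_{\rm reg}\cap f^{-1}(X_{\rm reg})\cap B_2)\setminus(B_1\cup\Sigma)$, a dense Zariski open subset of $Y_{\rm reg}\cap f^{-1}(X_{\rm reg})$; for $y\in Y_o$ the members of $f^*\sW$ through $y$ are general members, so in particular $f(P_a)=P_{f_\flat(a)}$ for each such member $P_a$.

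Now fix $y\in Y_o$ and distinct members $P_a\ne P_b$ of $f^*\sW$ through $y$, and write $a':=f_\flat(a)$, $b':=f_\flat(b)$, $x:=f(y)$; since $y\notin\Sigma$ we have $a'\neq b'$, so the right-hand side of the asserted identity is defined. The heart of the argument is the germ identity
$$F\big(\text{germ of}\ P_b^{y(a)}\ \text{at}\ y(a)\big)\ =\ \text{germ of}\ P_{b'}^{x(a')}\ \text{at}\ x(a').$$
To obtain it: (i) $F(y(a))=x(a')$, because $\mu_\sW(F(y(a)))=x$ and $\rho_\sW(F(y(a)))=a'$ by the commuting diagrams, while $x\in X_{\rm reg}$ forces $\mu_\sW^{-1}(x)\cap\rho_\sW^{-1}(a')$ to be a single point; (ii) $F$ carries $\mu_{f^*\sW}^{-1}(P_b)$ into $\mu_{\sW}^{-1}(P_{b'})$ since $\mu_\sW\circ F=f\circ\mu_{f^*\sW}$ and $f(P_b)=P_{b'}$; (iii) as $y(a)\in W$, $F$ is a local biholomorphism at $y(a)$, hence sends the smooth germ $P_b^{y(a)}$ biholomorphically onto a smooth $1$-dimensional germ at $x(a')$ contained in $\mu_\sW^{-1}(P_{b'})$; (iv) because $x\in X_{\rm reg}$, Proposition~\ref{p.regular} makes the germ of $\mu_\sW^{-1}(P_{b'})$ at $x(a')$ a single smooth branch, which is by definition the germ of $P_{b'}^{x(a')}$. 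Combining (i)--(iv) yields the displayed identity. Applying $\rho_\sW$ to both sides and using $\rho_\sW\circ F=f_\flat\circ\rho_{f^*\sW}$ together with Definition~\ref{d.def} turns the left-hand side into $f_\flat\big({\rm Def}(P_a;P_b,y)\big)$, which is the proper image because $a$ lies in the domain of $f_\flat$, and the right-hand side into ${\rm Def}(P_{a'};P_{b'},x)={\rm Def}(f(P_a);f(P_b),f(y))$, giving the lemma.

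The routine parts are the dimension estimates defining $Y_o$ and the verification that all germs in sight are genuine smooth $1$-dimensional germs, which for points of $X_{\rm reg}$ and $Y_{\rm reg}$ follows from Definition~\ref{d.def} and Proposition~\ref{p.regular} (using that distinct foliations of a web-structure meet transversally). The point that needs real care is steps (i)--(iv) taken together: $F$ may be ramified and its target has several sheets over $x$, so one must make sure that $F$ matches up \emph{exactly} the intended component $P_b^{y(a)}$ with $P_{b'}^{x(a')}$ near the base point. This is precisely where the conditions $y\in Y_{\rm reg}\cap f^{-1}(X_{\rm reg})$ and $y(a)\in W$ are used, and I expect it to be the main obstacle to a clean write-up.
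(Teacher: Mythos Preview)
Your argument is correct and follows essentially the same route as the paper: both proofs transport the construction of Definition~\ref{d.def} through the generically finite rational map $F={\rm univ}_{f^*\sW}$ and its two commuting squares with $(\mu_{\sW},f)$ and $(\rho_{\sW},f_\flat)$, reducing the statement to the germ identity $F(P_b^{y(a)})=P_{f_\flat(b)}^{x(f_\flat(a))}$. Your write-up is more explicit than the paper's (which dispatches the germ identity as ``easy to see''), and you add the precaution of excising the locus $\Sigma$ to guarantee $f_\flat(a)\neq f_\flat(b)$; the paper does not state this, but it is implicit once one observes that on $Y_{\rm reg}\cap f^{-1}(X_{\rm reg})$ the degree of $\mu_{f^*\sW}$ over $y$ equals the degree of $\mu_{\sW}$ over $f(y)$.
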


\begin{proof}
From the definition of $f^*\sW$ in Definition \ref{d.pushpull} (ii), there exists a dense Zariski open subset ${\rm dom}(f_{\flat}) \subset f^*\sW$ such that
$f(P_a) = P_{f_{\flat}(a)}$ for any $a \in {\rm dom}(f_{\flat})$ and
we have the  commuting diagram
of morphisms $$ \begin{array}{ccccc} {\rm Univ}_{f^*\sW} & \supset & \rho_{f^*\sW}^{-1}({\rm dom}(f_{\flat}))  & \stackrel{{\rm univ}_{f^*\sW}}{\longrightarrow} & {\rm Univ}_{\sW} \\ \rho_{f^*\sW} \downarrow & & \downarrow & & \downarrow \rho_{\sW} \\ f^*\sW & \supset & {\rm dom}(f_{\flat})  &\stackrel{ f_{\flat}}{\longrightarrow} & \sW. \end{array} $$
 Choose $Y_o \subset Y_{\rm reg} \cap f^{-1}(X_{\rm reg})$ such that $\mu_{f^*\sW}^{-1}Y_o \subset \rho_{f^*\sW}^{-1}({\rm dom}(f_{\flat})).$
 If $P_a \cap P_b \ni y \in Y_o$ and $x = f(y)$, then $a, b \in {\rm dom}(f_{\flat})$ and   it is easy to see that  $$ {\rm univ}_{f^*\sW} (P_b^{y(a)}) = P_{f_{\flat}(b)}^{x(f_{\flat}(a))}.$$ This implies the desired result by the above commuting diagram. \end{proof}

\begin{definition}\label{d.pleat}
In the setting of Definition \ref{d.def}, let $x \neq y$ be two distinct points of $P_a \cap X_{\rm reg}.$ We say that $P_a$ is {\em pleated at}  $(x; y)$ if for any  $ b \in \sW$ with  $x \in P_b \neq P_a$, there exists $c \in \sW$ ($b=c$ allowed) satisfying  $$y \in P_c \neq P_a \mbox{  and } {\rm Def}(P_a; P_b, x) = {\rm Def}(P_a; P_c, y).$$ An irreducible component $\sV$ of $\sW$ is a {\em pleated component} of $\sW$ if for a general member $C$ of $\sV$ and a general point $x \in C$, there exists a point $y \in C \cap X_{\rm reg},  x \neq y,$ such that $C$ is pleated at $(x;y)$.
A web $\sW$ is {\em pleated} if it has a pleated component. A univalent web is pleated by definition. \end{definition}

\begin{proposition}\label{l.pleat}
Let $\sW$ be a pleated web on a projective variety $X$ with a pleated component $\sV$.
\begin{itemize} \item[(1)] Suppose that  for a member $C$ of $\sV,$ we have an infinite subset $\sZ \subset C \cap X_{\rm reg}$ and a point $y \in C\cap X_{\rm reg}$ such that $C$ is pleated at $(z;y)$ for  any $z  \in \sZ$.   Then $C$ is pleated at  $(z; z')$ for infinitely many pairs $z  \neq z'$ of  elements of $\sZ$.  \item[(2)]
Let $O \subset X$ be any dense Zariski open subset. Then  a general member $C$ of $\sV$ is  pleated at  $(x; y), x \neq y \in C\cap X_{\rm reg}$ such that $x, y \in O.$ \end{itemize}
\end{proposition}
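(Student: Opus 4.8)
The plan is to deduce both parts from pigeonhole arguments resting on a single observation. For a point $x\in C\cap X_{\rm reg}$, let $\Sigma_x$ denote the set of deformation germs ${\rm Def}(C;P_b,x)$ taken over all $b\in\sW$ with $x\in P_b\neq C$. This set is finite: by Proposition \ref{p.regular} at most $d$ members of $\sW$ pass through any point of $X_{\rm reg}$, where $d$ is the degree of $\mu_{\sW}$, so $|\Sigma_x|\le d$. Unwinding Definition \ref{d.pleat}, $C$ is pleated at $(x;y)$ if and only if $\Sigma_x\subseteq\Sigma_y$.

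For (1): by hypothesis each $\Sigma_z$ with $z\in\sZ$ is a subset of the \emph{fixed finite} set $\Sigma_y$, so $z\mapsto\Sigma_z$ maps the infinite set $\sZ$ into the finite collection of subsets of $\Sigma_y$; hence there is an infinite $\sZ'\subseteq\sZ$ on which $\Sigma_z$ is constant. For any two distinct $z,z'\in\sZ'$ one has $\Sigma_z=\Sigma_{z'}$, in particular $\Sigma_z\subseteq\Sigma_{z'}$, so $C$ is pleated at $(z;z')$; as $\sZ'$ is infinite this gives infinitely many such pairs, automatically with $z,z'\in C\cap X_{\rm reg}$. It is essential here that the sets $\Sigma_z$ all lie inside the common finite set $\Sigma_y$ — the mere bound $|\Sigma_z|\le d$ would not suffice, since the germs themselves range over an infinite set.

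For (2): I would first choose $C$ general enough that $C\setminus O$ and $C\setminus X_{\rm reg}$ are both finite — a general member of the irreducible covering family $\sV$ is not contained in the proper closed subsets $X\setminus O$ or $X\setminus X_{\rm reg}$, hence meets each in finitely many points — and that the defining property of a pleated component applies to $C$: for all $x$ outside a finite subset of $C$ there is $y\in C\cap X_{\rm reg}$, $y\neq x$, with $C$ pleated at $(x;y)$ (which forces $x\in C\cap X_{\rm reg}$ as well, by Definition \ref{d.pleat}). Let $\sS$ be the set of those $x$ that moreover lie in $O$; then $\sS\subseteq O\cap C\cap X_{\rm reg}$ is cofinite in $C$, hence infinite. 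If some $x\in\sS$ admits a corresponding $y$ also lying in $O$, we are done: $x\neq y$, both lie in $O\cap C\cap X_{\rm reg}$, and $C$ is pleated at $(x;y)$. Otherwise, for each $x\in\sS$ choose $q(x)\in C\cap X_{\rm reg}$ with $C$ pleated at $(x;q(x))$; by assumption $q(x)\in C\setminus O$, a finite set, so (as $\sS$ is infinite) there is $q_0\in C\setminus O$ with $\sZ:=\{x\in\sS: q(x)=q_0\}$ infinite. Now $\sZ\subseteq\sS\subseteq O\cap C\cap X_{\rm reg}$ is infinite, $q_0\in C\cap X_{\rm reg}$, and $C$ is pleated at $(z;q_0)$ for every $z\in\sZ$, so part (1) applies and furnishes distinct $z,z'\in\sZ$ with $C$ pleated at $(z;z')$; since $\sZ\subseteq O$, these are the required points.

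The only step that is not purely formal is the second case of (2), where pleating always points toward the finitely many points of $C$ outside $O$: there one must invoke (1) to convert pleating toward a fixed $q_0\notin O$ into pleating between two points of $O$, because one cannot in general move $x$ and $y$ into $O$ independently — this is precisely the subtlety the notion of a pleated component is designed to handle. The rest, namely arranging a single general $C$ to satisfy all the genericity requirements at once and keeping track of which of the open sets $X_{\rm reg}$, $O$ each point must lie in, is routine.
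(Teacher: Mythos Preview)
Your proof is correct and follows essentially the same approach as the paper's. Your reformulation via the sets $\Sigma_x$ and the observation that ``$C$ is pleated at $(x;y)$ iff $\Sigma_x\subseteq\Sigma_y$'' streamlines the pigeonhole for (1): the paper instead tracks, for each $z\in\sZ$, the tuple $(z(1),\ldots,z(e))\in\{1,\ldots,e\}^e$ recording which member $E_y^{z(i)}$ at $y$ matches $E_z^i$ at $z$, and pigeonholes on that --- but both arguments collapse to the same idea of mapping the infinite $\sZ$ into a finite set determined by the fixed point $y$. For (2) your case division (either some $y$ already lies in $O$, or all the $y$'s land in the finite set $C\setminus O$ and one invokes (1)) is exactly the paper's dichotomy, phrased slightly more carefully.
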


\begin{proof} For each $x \in C \cap X_{\rm reg}$, denote by  $E^1_x, \ldots, E^{e}_x$  all the members of $\sW$ through $x$ different from $C$. For each $z \in \sZ$, as $C$ is pleated at $(z;y)$,  we have (not necessarily distinct) integers $1 \leq z(1), \ldots, z(e) \leq e$ such that
$$ {\rm Def}(C;E_z^{i},z) = {\rm Def}(C;E_y^{z(i)},y) \mbox{ for each } 1 \leq i \leq e.$$
Since there are only finitely many choices of $z(1), \ldots, z(e)$, we can assume that
$z(i) = z'(i)$ for each $1\leq i \leq e$ for infinitely many pairs $z \neq z' $ of elements of $\sZ$.
Then $${\rm Def}(C;E_z^{i},z) = {\rm Def}(C;E_y^{z(i)},y) =  {\rm Def}(C;E_y^{z'(i)},y) = {\rm Def}(C;E_{z'}^{i}, z')$$ for each $1 \leq i\leq e$. Thus $C$ is pleated at $(z;z')$.
 This proves (i).

Given $O \subset X$, choose a member $C$ of $\sV$ with $C \cap O \neq \emptyset$. Since $C$ is pleated, we have an infinite subset $\sZ  \subset C \cap O$ and a point $y_z \in C \cap X_{\rm reg}$ for each $z \in \sZ$ such  that $C$ is pleated at    $(z; y_z)$.   If infinitely many of $y_z$'s are distinct, then
 $y_z \in O$ for some $z \in \sZ$ and we are done. If infinitely many $y_z$'s coincide, we can deduce from (1) that $C$ is pleated at  $(z;z')$ for some pair $z, z' \in \sZ$,  proving (2).
\end{proof}

\begin{proposition}\label{e.pleat}
Let $f: Y \to X$ be a generically finite morphism between projective varieties. Let $\sW$ be a web of curves on $X$. Then   there exists a Zariski open subset $\sY \subset Y$ such that  a  member $C$ of ${\rm Mult}(f^*\sW)$ is pleated at   $(x; y)$ if  $x, y \in C \cap X_{\rm reg} \cap \sY$ satisfy  $x \neq y$ and $f(x) = f(y)$.  It follows that any component of  ${\rm Mult}(f^*\sW)$ is a pleated component of $f^*\sW$ and $f^*\sW$ is pleated if
${\rm Mult}(f^*\sW) \neq \emptyset.$ \end{proposition}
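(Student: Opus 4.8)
The plan is to route the deformation data through the natural generically finite rational map $f_\flat\colon f^*\sW\dasharrow\sW$ and to use its deformation-compatibility recorded in Lemma~\ref{l.def}. The first step is to pin down $\sY$. I would take $\sY\subset Y$ to be a dense Zariski open subset contained in the set $Y_o$ furnished by Lemma~\ref{l.def} (so members of $f^*\sW$ meeting $\sY$ lie in ${\rm dom}(f_\flat)$ and $f_\flat\,{\rm Def}(P_a;P_b,z)={\rm Def}(f(P_a);f(P_b),f(z))$ holds whenever $z\in\sY$ lies on two distinct members $P_a,P_b$ of $f^*\sW$), and moreover small enough that: (b) for every $z\in\sY$, distinct members of $f^*\sW$ through $z$ have distinct images under $f$, and every member of $f^*\sW$ meeting $\sY$ maps under $f_\flat$ to a member of $\sW$ general in its component (so the components of its $f$-preimage are again members of $f^*\sW$); and (c) for every member $C$ of $f^*\sW$ meeting $\sY$, the corresponding point $[C]\in f^*\sW$ and its image $f_\flat([C])\in\sW$ are smooth points and $f_\flat$ is unramified at $[C]$, i.e. $f_\flat$ is a local biholomorphism near $[C]$. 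Each requirement deletes only a proper closed subset of $Y$: a point of $Y$ lying on two distinct members of $f^*\sW$ with equal $f$-image lies on two distinct components of a single fibre $f^{-1}(Q)$, $Q\in\sW$, and the incidence variety of the pairs $(Q,z)$ is finite over $\sW$, so its image in $Y$ has dimension $\le\dim\sW=\dim Y-1$; the remaining exceptional loci are images under $\mu_{f^*\sW}\circ\rho_{f^*\sW}^{-1}$ of proper closed subsets of the $(\dim Y-1)$-dimensional components of $f^*\sW$ (the singular locus, the non-domain of $f_\flat$, the ramification locus of $f_\flat$, and the $f_\flat$-preimages of ${\rm Sing}\,\sW$ and of the complement of the good open part of $\sW$), hence have dimension $\le\dim Y-1$.

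With $\sY$ fixed, let $C\in{\rm Mult}(f^*\sW)$ and let $x\ne y$ be points of $C\cap Y_{\rm reg}\cap\sY$ with $f(x)=f(y)=:\bar x$; put $\bar C:=f(C)$, a member of $\sW$. To check that $C$ is pleated at $(x;y)$ (Definition~\ref{d.pleat}), fix an arbitrary $b\in f^*\sW$ with $x\in P_b\ne C$. By (b), $\bar b:=f(P_b)$ is a member of $\sW$ with $\bar b\ne\bar C$, so Lemma~\ref{l.def} gives
\[
f_\flat\,{\rm Def}(C;P_b,x)\ =\ {\rm Def}(\bar C;\bar b,\bar x).
\]
Since $\bar x=f(y)$ lies on $\bar b$ and, again by (b), $f^{-1}(\bar b)$ has a unique component $P_c$ through $y$, we obtain a member $P_c\in f^*\sW$ with $y\in P_c$ and $f(P_c)=\bar b\ne\bar C=f(C)$, so $P_c\ne C$; a second application of Lemma~\ref{l.def} yields
\[
f_\flat\,{\rm Def}(C;P_c,y)\ =\ {\rm Def}(\bar C;\bar b,\bar x).
\]
By (c), $f_\flat$ is a local biholomorphism at $[C]$, so the smooth germ of curve ${\rm Def}(\bar C;\bar b,\bar x)$ has there a unique preimage germ of curve under $f_\flat$; hence ${\rm Def}(C;P_b,x)={\rm Def}(C;P_c,y)$. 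As $b$ was arbitrary, $C$ is pleated at $(x;y)$.

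For the remaining assertions, let $\sV$ be an irreducible component of ${\rm Mult}(f^*\sW)$; it is an irreducible covering family, being a component of the covering family $f^*\sW$, so its general member meets $Y_{\rm reg}\cap\sY$. By Definition~\ref{d.pushpull}, a general member $C$ of $\sV$ has $f|_C\colon C\to f(C)$ of degree $m\ge2$, so for general $x\in C$ there are $m-1$ further points of $C$ over $f(x)$; these vary algebraically with $x$ and the complement of $Y_{\rm reg}\cap\sY$ in $C$ is finite, so for general $x\in C$ we have $x\in Y_{\rm reg}\cap\sY$ and at least one such further point, say $y$, satisfies $y\ne x$, $f(y)=f(x)$ and $y\in C\cap Y_{\rm reg}\cap\sY$. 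By the previous paragraph $C$ is pleated at $(x;y)$, hence $\sV$ is a pleated component of $f^*\sW$, and in particular $f^*\sW$ is pleated whenever ${\rm Mult}(f^*\sW)\ne\emptyset$.

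I expect the delicate point to be the uniform construction of $\sY$: the argument depends on ``two distinct members of $f^*\sW$ through a common point have distinct $f$-images'' and ``$f_\flat$ is a local biholomorphism at the parameter point of the relevant member'' holding off proper closed subsets of $Y$, so that one dense open $\sY$ works for \emph{every} member of ${\rm Mult}(f^*\sW)$ at once rather than merely for general ones. Granting this, the core of the proof is just the two invocations of Lemma~\ref{l.def} together with the uniqueness of the lift of a germ of smooth curve under a local biholomorphism.
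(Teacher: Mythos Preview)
Your proposal is correct and follows essentially the same approach as the paper: choose $\sY$ inside the $Y_o$ of Lemma~\ref{l.def} so that every member of $f^*\sW$ meeting $\sY$ lies in ${\rm dom}(f_\flat)$ with $f_\flat$ unramified there, apply Lemma~\ref{l.def} at $x$ and at $y$ to see that the two deformation germs ${\rm Def}(C;P_b,x)$ and ${\rm Def}(C;P_c,y)$ have the same image under $f_\flat$, and conclude equality from the local biholomorphicity of $f_\flat$ at $[C]$. You are slightly more explicit than the paper in imposing condition~(b) (distinct members through a point of $\sY$ have distinct $f$-images, and preimages of general members of $\sW$ are members of $f^*\sW$) and in spelling out the final ``it follows'' clause, both of which the paper leaves implicit; but these are refinements of the same argument rather than a different route.
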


\begin{proof}  Let us use the terminology of Lemma \ref{l.def}. Using Lemma \ref{l.choose}, choose  a Zariski open subset $\sY \subset Y_o$ such that
if $P_a \cap \sY \neq \emptyset$ for $a \in \sW$, then $a \in {\rm dom}(f_{\flat})$ and $f_{\flat}$ is unramified at $a$.
Assuming that $x, y \in C \cap X_{\rm reg} \cap \sY$ satisfy  $x \neq y$ and $f(x) = f(y),$
let $E\neq C$ be any member of $f^*\sW$ satisfying $x \in E \cap C$ and let $F$ be an irreducible component (not necessarily different from $E$) of $f^{-1}(f(E))$ through $y$. By Lemma \ref{l.def},
$$f_{\flat} {\rm Def}(C;E,x) = {\rm Def}(f(C); f(E), f(x)) = $$ $$ {\rm Def}(f(C); f(F), f(y)) = f_{\flat} {\rm Def}(C;F,y).$$
As $f_{\flat}$ is unramified at the point corresponding to $C$ in ${\rm dom}(f_{\flat})$, this implies that
${\rm Def}(C;E,x) = {\rm Def}(C;F,y)$ as germs of curves in $\sW$.  It follows that $C$ is pleated at $(x;y)$. \end{proof}

\begin{proposition}\label{p.pleat}
Let $f: Y \to X$ be a generically finite morphism between projective varieties and let $Y_o \subset Y$ be as in Lemma \ref{l.def}.
Let $\sW$ be a web of curves on $X$ such that $f^*\sW$ is pleated with a pleated component $\sV$.
Assume that  for a general member $C$ of $\sV$ and a general point $z \in C \cap Y_o,$  there is a point $z' \in C \cap Y_o$ such that  $$f(z') \in X_{\rm reg}, \ z \neq z', \ f(z) \neq f(z') \mbox{ and } C \mbox{  is  pleated at } (z;z').$$ Then $f(C)$ is pleated at $(f(z);f(z'))$. It follows that $\sW$ is pleated, having  $f_*\sV$ as a pleated component. \end{proposition}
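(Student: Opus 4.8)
The plan is to transport the pleating property from the general member $C$ of $\sV \subset f^*\sW$ down to its image $f(C)$, which is a general member of $f_*\sV$, using Lemma \ref{l.def} as the bridge. First I would fix notation: let $C$ be a general member of $\sV$, let $z \in C \cap Y_o$ be a general point and $z' \in C \cap Y_o$ the point furnished by hypothesis, with $x := f(z)$, $x' := f(z')$, so that $x \neq x'$, both lie in $X_{\rm reg}$ (genericity of $z$ plus the hypothesis on $z'$), and $f(C) = P$ is a member of $\sW$ through both $x$ and $x'$. The goal is to show $P$ is pleated at $(x; x')$, i.e.\ that for every member $Q \ne P$ of $\sW$ through $x$ there is a member $Q' \ne P$ of $\sW$ through $x'$ with ${\rm Def}(P; Q, x) = {\rm Def}(P; Q', x')$.

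The key step is to lift $Q$ back to $Y$. Since $f$ is generically finite and $z \in Y_o \subset f^{-1}(X_{\rm reg})$ with $z$ general, the members of $f^*\sW$ through $z$ map onto all the members of $\sW$ through $x$ (this is the content of ${\rm dom}(f_\flat)$ in Lemma \ref{l.def}: $f(P_a) = P_{f_\flat(a)}$ for $a$ with $P_a \cap Y_o \ne \emptyset$, and $f_\flat$ restricted to the finitely many members through $z$ is surjective onto the members through $x$). So pick $E \ne C$ a member of $f^*\sW$ through $z$ with $f(E) = Q$. By the pleating of $C$ at $(z; z')$, there is a member $E' \ne C$ of $f^*\sW$ through $z'$ with ${\rm Def}(C; E, z) = {\rm Def}(C; E', z')$ as germs of curves in $f^*\sW$ at the point $[C]$. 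Now apply $f_\flat$: by Lemma \ref{l.def} (applicable since $z, z' \in Y_o$), $f_\flat {\rm Def}(C; E, z) = {\rm Def}(f(C); f(E), f(z)) = {\rm Def}(P; Q, x)$ and likewise $f_\flat {\rm Def}(C; E', z') = {\rm Def}(P; f(E'), x')$. Since the two germs upstairs are equal, so are their proper images under $f_\flat$, giving ${\rm Def}(P; Q, x) = {\rm Def}(P; Q', x')$ with $Q' := f(E')$. One still needs $Q' \ne P$: this holds because $E' \ne C$ and, by shrinking $Y_o$ further (using Lemma \ref{l.choose} as in the proof of Proposition \ref{e.pleat}) we may assume $f_\flat$ is unramified at $[C]$ and sends members through $z'$ distinct from $C$ to members distinct from $P = f(C)$. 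As $Q$ ranged over all members of $\sW$ through $x$ different from $P$, this shows $P$ is pleated at $(x; x')$.

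Finally, since $z$ was a general point of a general member $C$ of $\sV$, the point $x = f(z)$ is a general point of the general member $P = f(C)$ of $f_*\sV$, and we have produced $x' = f(z') \in P \cap X_{\rm reg}$, $x' \ne x$, with $P$ pleated at $(x; x')$. By Definition \ref{d.pleat}, $f_*\sV$ is a pleated component of $\sW$, hence $\sW$ is pleated. The main obstacle is the bookkeeping that makes the descent clean: ensuring that the genericity of $z$ (together with a suitable Zariski-open $Y_o$, shrunk as in Lemma \ref{l.def} and Proposition \ref{e.pleat}) simultaneously guarantees (a) that $f_\flat$ is defined and unramified at $[C]$, (b) that the members of $f^*\sW$ through $z$ surject onto those of $\sW$ through $x$, and (c) that the hypothesis supplies $z'$ with $f(z') \in X_{\rm reg}$ — only then does Lemma \ref{l.def} apply at both $z$ and $z'$ and the proper-image identity go through. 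Once the open sets are chosen, the equality of deformation germs is a formal consequence of Lemma \ref{l.def}.
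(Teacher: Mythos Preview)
Your proposal is correct and follows essentially the same route as the paper: lift an arbitrary member $Q \neq P$ of $\sW$ through $f(z)$ to a member $E \neq C$ of $f^*\sW$ through $z$, use the pleating of $C$ at $(z;z')$ to find $F \neq C$ through $z'$ with ${\rm Def}(C;E,z) = {\rm Def}(C;F,z')$, then apply Lemma \ref{l.def} at both points to push these equal germs down via $f_\flat$. Your extra care in checking $f(F) \neq f(C)$ is justified (the paper leaves this implicit); the cleanest justification is that $f$ is \'etale at $z' \in Y_o$ and $z'$ lies in $Y_{\rm reg}$ for $f^*\sW$, so the distinct tangent directions of $C$ and $F$ at $z'$ map to distinct tangent directions at $f(z')$, forcing $f(F) \neq f(C)$.
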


\begin{proof}
Any member of $\sW$ through $f(z)$, different from $f(C)$, is of the form $f(E)$ for some member $E \neq C$ of $f^*\sW$ through $z$. Since $C$ is pleated at $(z;z')$, there exists a member $F$ of $f^*\sW$ satisfying $F \neq C, z' \in F$ and
$${\rm Def}(C;E,z) = {\rm Def}(C; F, z').$$ By Lemma \ref{l.def},
 $${\rm Def}(f(C); f(E), f(z)) = f_{\flat} {\rm Def}(C; E, z) = $$ $$ f_{\flat} {\rm Def}(C; F ,z') = {\rm Def}(f(C); f(F), f(z')).$$
By the assumption $f(z) \neq f(z') \in X_{\rm reg}$, this implies that $f(C)$ is pleated at $(f(z); f(z'))$. \end{proof}

\begin{proposition}\label{p.pleat2}
Let $X$ be a simply connected projective manifold of Picard number 1 equipped with an \'etale web $\sW$ of smooth curves. Let $X'$ be a projective variety equipped with a web $\sW'$ of curves. Let $\Gamma \subset X \times X'$ be a generically finite algebraic correspondence  respecting $[\sW; \sW']$. If  $\Gamma$ is irreducible and the projection ${\rm pr}_X:  \Gamma \to X$ is not birational, then $\sW'$ is  pleated. \end{proposition}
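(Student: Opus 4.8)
The plan is to pull $\sW$ back to the correspondence $\Gamma$, use the fact that ${\rm pr}_X$ is not birational to produce a \emph{multiple} component of the pulled-back web, observe that such a component is automatically pleated, and then push it down to $X'$ along ${\rm pr}_{X'}$. Write $p={\rm pr}_X|_\Gamma: \Gamma\to X$ and $q={\rm pr}_{X'}|_\Gamma: \Gamma\to X'$; both are generically finite because $\Gamma$ is a generically finite algebraic correspondence, and $p$ is not birational by hypothesis. The hypothesis that $\Gamma$ respects $[\sW;\sW']$ means in particular that over a general point of $\Gamma$ the members of $p^*\sW$ and of $q^*\sW'$ coincide; hence $p^*\sW$ and $q^*\sW'$ have the same irreducible components, and the property ``pleated at $(z;z')$'' has the same content for either of these two webs on $\Gamma$.

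\textbf{Producing a multiple component over $\Gamma$.} Since $p: \Gamma\to X$ is generically finite but not birational, $X$ is a simply connected projective manifold of Picard number $1$, and $\sW$ is an \'etale web of smooth curves on $X$, Proposition \ref{l.nbirational} gives ${\rm Mult}(p^*\sW)\neq\emptyset$. Fix an irreducible component $\sV$ of ${\rm Mult}(p^*\sW)$. Proposition \ref{e.pleat}, applied with $f=p$, shows that $\sV$ is a pleated component of $p^*\sW$, and moreover furnishes a dense Zariski open subset $\sY\subset\Gamma$ such that any member $C$ of ${\rm Mult}(p^*\sW)$ is pleated at $(z;z')$ as soon as $z\neq z'$ are points of $C$ lying in the relevant good loci and $p(z)=p(z')$. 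By the first paragraph, $\sV$ is then also a pleated component of $q^*\sW'$.

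\textbf{Applying Proposition \ref{p.pleat} to $q$.} Let $Y_o\subset\Gamma$ be the dense Zariski open subset of Lemma \ref{l.def} attached to the morphism $q$ and the web $\sW'$, shrunk so that it lies inside $\sY$, inside $\Gamma_{\rm reg}$ for the web $q^*\sW'$, and inside $q^{-1}(X'_{\rm reg})$. Take a general member $C$ of $\sV$ and a general point $z\in C\cap Y_o$. Because $\sV\subset{\rm Mult}(p^*\sW)$, the restriction $p|_C: C\to p(C)$ is not birational, hence generically of degree at least $2$, so for general $z$ there is a point $z'\in C$ with $z'\neq z$ and $p(z')=p(z)$; a routine general-position argument on the universal family of $\sV$ lets us take $z'$ again in $Y_o$. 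As $z$ and $z'$ are distinct points of $\Gamma\subset X\times X'$ with the same image under $p$, they automatically have different images under $q$, so $q(z)\neq q(z')$, while $q(z')\in X'_{\rm reg}$ by the choice of $Y_o$; and $C$ is pleated at $(z;z')$ by the property recorded in the previous paragraph (pleatedness being the same for $p^*\sW$ and $q^*\sW'$). These are exactly the hypotheses of Proposition \ref{p.pleat} for $f=q: \Gamma\to X'$ and the pleated component $\sV$ of $q^*\sW'$, so that proposition gives that $q(C)$ is pleated at $(q(z);q(z'))$, and hence $\sW'$ is pleated with $q_*\sV$ as a pleated component.

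The delicate point is the use of the hypothesis that $\Gamma$ respects $[\sW;\sW']$: it is precisely what allows the pleatedness produced over $\Gamma$ from the non-birationality of $p$ (via Propositions \ref{l.nbirational} and \ref{e.pleat}) to be re-read as pleatedness of the web $q^*\sW'$, so that Proposition \ref{p.pleat} can transport it all the way down to $X'$. The remaining ingredients — that the partner point $z'$ can be arranged to lie in each of the finitely many dense open subsets involved, and that $q(z)\neq q(z')$ comes for free from $z\neq z'$ and $p(z)=p(z')$ — are routine genericity checks.
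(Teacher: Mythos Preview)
Your proof is correct and follows essentially the same route as the paper's: pull back to $\Gamma$, invoke Proposition~\ref{l.nbirational} to get a component $\sV\subset{\rm Mult}(p^*\sW)$, use Proposition~\ref{e.pleat} to produce pleatedness at pairs $(z;z')$ with $p(z)=p(z')$, and then push down via Proposition~\ref{p.pleat} along $q$. Your observation that $q(z)\neq q(z')$ comes for free from $z\neq z'$ and $p(z)=p(z')$ (since $\Gamma\subset X\times X'$) is a clean point that the paper leaves implicit.
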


\begin{proof}
To simplify the notation, write $Y= \Gamma$ and the two projections as $f: Y \to X$ and $q: Y \to X'$.
By assumption, the two webs $f^*\sW$ and $q^* \sW'$ on $Y$ coincide.
We know that ${\rm Mult}(f^* \sW)$ is nonempty from Proposition \ref{l.nbirational}. Let $\sV$ be any component of ${\rm Mult}(f^* \sW)$. Let $\sY \subset Y$ be as in Proposition \ref{e.pleat}.
For a general member $C$ of $\sV$ and a general point $z \in C \cap \sY $, we can choose a point $z'\in \sY$ such that
$$f(z) = f(z'), \ q(z) \neq q(z') \mbox{ and }  q(z') \in X'_{\rm reg}.$$ Then $C$ is pleated at $(z;z')$ by Proposition \ref{e.pleat}.
Thus $q_* \sV$ is a pleated component of $\sW'$ by Proposition \ref{p.pleat}. \end{proof}

\section{\'Etale webs of lines}\label{s.line}

\begin{definition}\label{d.line}
Let $X$ be a projective manifold. An \'etale web $\sW$ of smooth rational curves is called an {\em \'etale web of lines} if there is an embedding $X \subset \BP^N$ such that
the image of the members of the web are lines in $\BP^N$.
\end{definition}

\begin{proposition}\label{p.kollar}
Let $\sW$ be an \'etale web of lines on a projective manifold $X \subset \BP^N$.  A member $C \subset X$ of $\sW$ is said to be free if the normal bundle $N_{C \subset X}$ is trivial.
Let $\widehat{\sW}$ be the normalization of $\sW$ and let $\widehat{\sW}^{\rm free}$ be the dense Zariski open subset corresponding to free members.
Let $$\rho_{\widehat{\sW}}: {\rm Univ}_{\widehat{\sW}} \to \widehat{\sW} \mbox{ and }
\mu_{\widehat{\sW}}: {\rm Univ}_{\widehat{\sW}} \to X$$ be the normalization of $\rho_{\sW}$ and $\mu_{\sW}$.
Then \begin{itemize} \item[(i)] $\rho_{\widehat{\sW}}$ is a $\BP^1$-bundle;
\item[(ii)] $\widehat{\sW}^{\rm free}$ is contained in the smooth locus of $\widehat{\sW}$ and  the morphism $\mu_{\widehat{\sW}}$ is unramified on $\rho_{\widehat{\sW}}^{-1}(\widehat{\sW}^{\rm free}).$ \end{itemize}
    \end{proposition}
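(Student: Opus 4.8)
The plan is to identify the normalized universal family with the pullback of the tautological $\BP^1$-bundle over the Grassmannian of lines, and then to run the standard deformation-theoretic analysis of free rational curves.

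\emph{Part (i).} First I would note that every member of $\sW$ is literally a line in $\BP^N$: an irreducible component of $\sW$ lies in a single ${\rm Chow}_{1,d}$, and since a general member is a reduced line we have $d=1$, so every member is an effective $1$-cycle of degree $1$ on $X\subset\BP^N$, i.e. a line contained in $X$. Distinct members are distinct lines, hence distinct points of the Grassmannian $\G(1,N)$ of lines in $\BP^N$, so the cycle map gives a finite morphism $c\colon \sW\to\G(1,N)$, birational onto its image. By the description of the universal family over the Chow variety of lines (cf. I.3 of \cite{Ko}), $\rho_{\sW}\colon {\rm Univ}_{\sW}\to\sW$ is the base change along $c$ of the tautological $\BP^1$-bundle $\BP(\sU)\to\G(1,N)$, where $\sU$ is the rank $2$ tautological subbundle of $\sO_{\G(1,N)}^{N+1}$, and $\mu_{\sW}$ is induced by the incidence map $\BP(\sU)\to\BP^N$ (whose image over $\sW$ lies in $X$). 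Then I would base change $\BP(\sU)$ along the composite $\widehat{\sW}\to\sW\stackrel{c}{\to}\G(1,N)$: the result is a $\BP^1$-bundle over the normal variety $\widehat{\sW}$, hence a normal variety, and it is finite and birational over ${\rm Univ}_{\sW}$, so it must be the normalization ${\rm Univ}_{\widehat{\sW}}$. This proves (i) and simultaneously realizes $\rho_{\widehat{\sW}}$ as the pullback of $\BP(\sU)\to\G(1,N)$ and $\mu_{\widehat{\sW}}$ as the induced morphism to $X$, a description I will reuse for (ii).

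\emph{Part (ii).} Fix $[L]\in\widehat{\sW}^{\rm free}$; then $L\cong\BP^1$ is a line with $N_{L\subset X}\cong\sO_L^{\oplus(n-1)}$, where $n=\dim X$. Since $H^1(L,N_{L\subset X})=0$, deformation theory gives that the Hilbert scheme of lines on $X$ (equivalently, for lines, the Chow variety ${\rm Chow}^1(X)$) is smooth at $[L]$ of dimension $h^0(L,N_{L\subset X})=n-1$. As $\sW$ is a web it has pure dimension $n-1$, so the $(n-1)$-dimensional subvariety $\sW$, passing through the smooth $(n-1)$-dimensional point $[L]$ of this scheme, coincides with it near $[L]$; hence $\sW$ is smooth at $[L]$ and $\widehat{\sW}\to\sW$ is an isomorphism over a neighborhood of $[L]$, so $\widehat{\sW}^{\rm free}$ lies in the smooth locus of $\widehat{\sW}$. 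By part (i), ${\rm Univ}_{\widehat{\sW}}$ is then smooth of dimension $n$ near $\rho_{\widehat{\sW}}^{-1}([L])$ and $\rho_{\widehat{\sW}}$ is smooth there, so for $y\in\rho_{\widehat{\sW}}^{-1}([L])$ with $x=\mu_{\widehat{\sW}}(y)$ there is an exact sequence
$$0\longrightarrow T_y(\rho_{\widehat{\sW}}^{-1}([L]))\longrightarrow T_y({\rm Univ}_{\widehat{\sW}})\stackrel{{\rm d}\rho_{\widehat{\sW}}}{\longrightarrow} T_{[L]}\widehat{\sW}\longrightarrow 0,$$
and I identify $T_{[L]}\widehat{\sW}\cong H^0(L,N_{L\subset X})$. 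The restriction of $\mu_{\widehat{\sW}}$ to the fiber $\rho_{\widehat{\sW}}^{-1}([L])$ is an isomorphism onto the line $L$, so ${\rm d}\mu_y$ maps $T_y(\rho_{\widehat{\sW}}^{-1}([L]))$ isomorphically onto $T_xL\subset T_xX$; and composing ${\rm d}\mu_y$ with $T_xX\to T_xX/T_xL=(N_{L\subset X})_x$ induces on $H^0(L,N_{L\subset X})$ the evaluation at $x$, which is surjective because $N_{L\subset X}$ is globally generated. Hence the image of ${\rm d}\mu_y$ contains $T_xL$ and surjects onto $(N_{L\subset X})_x$, so ${\rm d}\mu_y$ is surjective; since $\dim{\rm Univ}_{\widehat{\sW}}=n=\dim X$, it is an isomorphism. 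Thus $\mu_{\widehat{\sW}}$ is unramified along $\rho_{\widehat{\sW}}^{-1}(\widehat{\sW}^{\rm free})$, which is (ii). (That $\widehat{\sW}^{\rm free}$ is a dense Zariski-open subset is part of the data: density is the definition of an étale web, a general member having trivial normal bundle, and openness follows from semicontinuity, since all members have $\deg N_{L\subset X}=0$ and so freeness amounts to $H^1(L,N_{L\subset X}(-1))=0$.)

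\emph{Main obstacle.} The only genuinely non-formal inputs are, in (i), the identification of the universal family of a family of lines over the Chow variety with a pullback of the tautological $\BP^1$-bundle, and in (ii), the deformation-theoretic fact that the Chow variety (or Hilbert scheme) of lines on $X$ is smooth of the expected dimension $h^0(N_{L\subset X})$ at a free line, together with the comparison of these two parameter spaces near such a point; both are standard but should be quoted precisely from the theory of rational curves. Once they are in hand, the normalization argument in (i) and the tangent-space bookkeeping in (ii) are routine.
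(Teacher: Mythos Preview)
Your proof is correct and follows essentially the same approach as the paper's, which simply cites Koll\'ar's book: your argument for (i) via the Grassmannian is a concrete realization of the $\BP^1$-bundle structure that the paper obtains by identifying $\widehat{\sW}$ with components of ${\rm RatCurves}^{\rm n}(X)$ (II.2.11--12 of \cite{Ko}), and your tangent-space computation in (ii) is exactly the content of II.3.5.4 of \cite{Ko} that the paper invokes. The only cosmetic difference is that you work directly with lines in $\BP^N$ rather than passing through the abstract ${\rm Hom}$-scheme quotient, which is a perfectly good shortcut here since all members are genuine lines.
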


    \begin{proof}
    The normalization $\widehat{\sW}$  corresponds to the union of finitely many components of the normalized space of rational curves ${\rm RatCurves}^{\rm n}(X)$ defined in II.2.11 of \cite{Ko}. (i) is immediate because the members are lines (also  from II.2.12 of \cite{Ko}), while (ii) is from (i) and II.3.5.4 of \cite{Ko}. \end{proof}

   \begin{proposition}\label{p.normal}
   In the setting of Proposition \ref{p.kollar}, write $$P_a = \mu_{\widehat{\sW}}(\rho_{\widehat{\sW}}^{-1}(a)) \mbox{  for }
   a \in \widehat{\sW}.$$
  Fix an irreducible component $\sV$ of $\sW$ and let $f: {\rm Univ}_{\widehat{\sV}} \to  X$ and $g: {\rm Univ}_{\widehat{\sV}} \to \widehat{\sV}$ be the normalized universal family morphisms.
      Let $\sR \subset \widehat{\sV}$ be the  union of all irreducible hypersurfaces $H \subset \widehat{\sV}$ such that \begin{itemize} \item[(1)] $f(g^{-1}(H))$ is a hypersurface in $X$ and \item[(2)] the morphism $f$ is ramified  at a general point  of $g^{-1}(H).$ \end{itemize} Write $R = g^{-1}(\sR)$ and  $B= f(R).$  Then there exist a dense Zariski open subset $B_o \subset B$  and a dense Zariski open subset $O \subset X$  with the following properties.  \begin{itemize} \item[(i)] $B_o$ is
      contained in the smooth locus of $B$ and $g(f^{-1}(B_o) \cap R)$ is contained in the smooth locus of $\sR$.
    \item[(ii)]  If $a \in \sR$ and  $P_a  \cap B_o \neq \emptyset,$ then the normal bundle $N_{P_a \subset X}$ of $P_a \subset X$  is isomorphic to $\oplus_{1\leq i \leq n-1} \sO(m_i)$  for some integers $m_i$ satisfying $$m_1 \geq m_2 \geq \cdots \geq m_{n-2} \geq 0 > m_{n-1}.$$ \item[(iii)] In (ii), for any point $x \in P_a \cap B_o$,   the semipositive part of the fiber of the normal bundle of $P_a$ at $x$
   $$\oplus_{1 \leq i \leq n-2} \sO(m_i)_x \subset \oplus_{1 \leq i \leq n-1} \sO(m_i)_x \ \cong N_{P_a \subset X, x}$$  corresponds to  $$T_x(B_o)/T_x(P_a)
   \subset T_x(X)/T_x(P_a) = N_{P_a \subset X, x}.$$
   \item[(iv)] In (iii), if $s \in H^0(P_a, N_{P_a \subset X})$, then        $$s_x \in T_x(B_o)/T_x(P_a) \subset T_x(X)/T_x(P_a) =  N_{P_a \subset X, x}.$$ \item[(v)] Any member $E$ of $\sW$ with $E \cap O \neq \emptyset$ is a regular member of $\sW$, i.e., belonging to $\sW^{\rm etale}$,  satisfies $E \cap B = E \cap B_o$ and intersects $B_o$ transversally.
   \end{itemize}
   \end{proposition}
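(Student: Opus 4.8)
The plan is to construct the Zariski open sets $B_o \subset B$ and $O \subset X$ by a finite sequence of shrinkings, each one enforcing one of the properties (i)--(v). First I would set up the generic picture: since $\sV$ is an irreducible \'etale web of smooth rational curves by Proposition \ref{p.example}, Proposition \ref{p.kollar} gives that $g = \rho_{\widehat\sV}$ is a $\BP^1$-bundle and $f = \mu_{\widehat\sV}$ is unramified over the free locus; hence the ramification divisor $R$ of $f$ is precisely (the closure of) $g^{-1}(\sR)$ up to lower-dimensional pieces, and $B = f(R)$ is a genuine hypersurface (or empty, in which case there is nothing to prove). I would choose $B_o$ inside the smooth locus of $B$ so that $f|_R : R \to B$ is, over $B_o$, a nice map (generically finite onto a dense open part of $B$, with $g(f^{-1}(B_o)\cap R)$ landing in the smooth locus of $\sR$); this gives (i). The point here is purely generic-smoothness bookkeeping.

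For (ii): a member $P_a$ with $a \in \sR$ is, by definition of $\sR$, one for which $f$ ramifies along a hypersurface of $\widehat\sV$ passing through $a$; equivalently $P_a$ is a non-free line, so $N_{P_a \subset X} \cong \oplus \sO(m_i)$ with not all $m_i = 0$. Since $P_a$ is a line with $H^0(N_{P_a\subset X})$ of dimension $\dim\sV = n-1$ (coming from the deformations parametrized by $\sV$), and since $h^0(\sO(m)) = \max(m+1,0)$, a dimension count forces $\sum_{m_i \ge 0}(m_i+1) = n-1$, which is possible only if exactly one $m_i$, say $m_{n-1}$, is negative and the others are $\ge 0$; the ordering is just relabelling. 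I would make this count precise by noting that over $B_o$ the map $f|_R$ has one-dimensional kernel of the differential transverse to $P_a$, forcing exactly one negative summand rather than more. Properties (iii) and (iv) are then differential-geometric identifications: the sub-bundle of $N_{P_a\subset X}$ generated by global sections is exactly $\oplus_{i\le n-2}\sO(m_i)$ (the semipositive part, since $\sO(m_{n-1})$ with $m_{n-1}<0$ has no sections), and on the other hand those global sections are the infinitesimal deformations of $P_a$ inside $\sV$, whose images sweep out a hypersurface whose tangent space at $x$ is $T_x(B_o)/T_x(P_a)$ --- here I would use that $B$ is by construction the branch locus, i.e. the locus where the deformation family $\rho^{-1}(\sR)\to B$ fails to be \'etale, and that along $B_o$ this locus is smooth of the expected dimension. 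Shrinking $B_o$ once more guarantees the fibrewise statements hold at every $x \in P_a \cap B_o$, and then (iv) follows since $s_x$, being the value of a section, lies in the section-generated sub-bundle, which by (iii) is $T_x(B_o)/T_x(P_a)$.

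Finally for (v): by Definition \ref{d.etale} there is a dense open $\sW^{\rm etale}$ of regular members; applying Lemma \ref{l.choose} to $\sW^{\rm etale}$ produces a dense open $O_1 \subset X$ every member through which is regular. Separately, by generic smoothness the set of members $E$ meeting $B$ non-transversally, or meeting $B \setminus B_o$, is a proper closed subset of $\sW$, so Lemma \ref{l.choose} again yields a dense open $O_2 \subset X$ avoiding all of these. Put $O = O_1 \cap O_2$. I expect the main obstacle to be property (iii)--(iv): one must match up, with care, two a priori different constructions of the positive part of the normal bundle --- the one coming from restriction of sections, and the one coming from the geometry of the ramification divisor $B$ in $X$ --- and show they agree fibrewise over a common dense open set. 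This is where the fact that the members are \emph{lines} (so that their normal bundles split as sums of line bundles on $\BP^1$ and the section spaces are completely understood) is essential; for higher-degree curves the argument would not go through, which matches the cautionary remarks in the introduction.
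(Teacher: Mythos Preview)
Your overall structure matches the paper's, but the heart of (ii) has a real gap. You assert $h^0(N_{P_a\subset X}) = \dim\sV = n-1$ and then that $\sum_{m_i\ge 0}(m_i+1) = n-1$ forces exactly one negative $m_i$. Neither step holds. For the first, deformations in $\widehat\sV$ give only $h^0 \ge n-1$; whenever some $m_i \le -2$ one has $h^1>0$ and hence $h^0 = (n-1)+h^1 > n-1$. For the second, take $n=6$ and splitting type $(1,1,0,-1,-1)$: here $\sum m_i = 0$, each $m_i\le 1$, and $h^0 = 5 = n-1$, yet there are two negative summands and (ii) would fail. Your follow-up about the one-dimensional kernel of $df$ gestures at the right geometry but does not by itself constrain the splitting type, and you never invoke the degree constraint $\sum_i m_i = 0$, without which $m_{n-1}<0$ cannot be concluded.

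The paper supplies both missing ingredients. From $K_X^{-1}\cdot P_a = 2$ (since $P_a$ is numerically equivalent to a general member, whose normal bundle is trivial) one gets $\sum_i m_i = 0$. Then, writing $\alpha = f|_R$ and $\beta = g|_R$, one chooses $B_o$ so that $\alpha$ is \'etale over $B_o$ and $\beta(\alpha^{-1}(B_o))$ lies in the smooth locus of $\sR$. The fiber $\beta^{-1}(a) = g^{-1}(a)$ has trivial normal bundle of rank $n-2$ inside $R$, and because $\alpha$ is \'etale over $B_o$, the resulting $(n-2)$-dimensional space of sections of $N_{P_a\subset X}$ evaluates isomorphically onto $T_x(B_o)/T_x(P_a)$ at each $x\in P_a\cap B_o$. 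This single observation yields $m_{n-2}\ge 0$ and proves (iii) at the same stroke; then $m_{n-1} = -\sum_{i\le n-2}m_i$ is negative unless all $m_i$ vanish, which is ruled out by Proposition~\ref{p.kollar}(ii) since $a\in\sR$. Property (iv) follows because every section has zero component in $\sO(m_{n-1})$. Note also that in your sketch of (iii) you say the deformations ``inside $\sV$'' sweep out a hypersurface; it is the deformations inside $\sR$ that do so --- those inside all of $\widehat\sV$ sweep out $X$. Your handling of (i) and (v) is fine; the paper cites Lemma~\ref{l.positive} for (v) where you use Lemma~\ref{l.choose}, and either route works.
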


   \begin{proof}
      Denote by $\alpha : R \to B$ the restriction of $f$ and by $\beta: R \to \sR$ the restriction of $g$. Then  $\alpha$ is a generically finite morphism and we can choose a dense Zariski open subset $B_o \subset B$ contained in the smooth locus of $B$ such that $$\alpha|_{\alpha^{-1}(B_o)}: \alpha^{-1}(B_o) \to B_o$$  is \'etale and  the image $\beta(\alpha^{-1}(B_o))$ is contained in the smooth locus of $\sR$. For $a \in \beta(\alpha^{-1}(B_o))$,    write $N_{P_a \subset X} \cong \oplus_{i=1}^{n-1} \sO(m_i)$ with $m_i \geq m_{i+1}$. Since a general member of $\sW$ has trivial normal bundle, we know that $K_X^{-1} \cdot P_a = 2$ and
   $m_1 + \cdots + m_{n-1} =0$. The sections of the trivial normal bundle of $\beta^{-1}(a)$ inside $R$ give rise to elements of  $H^0(P_a, N_{P_a \subset X})$  generating the subspace $$T_x(B_o)/T_x(P_a) \subset  N_{P_a \subset X, x}$$ at each point  $x \in P_a \cap B_o.$
   It follows that $m_{n-2} \geq 0$. Then $m_{n-1} = - (m_1 + \cdots + m_{n-2})$ is negative unless $m_1 = \cdots = m_{n-1} =0$.
   But  the latter case cannot happen, because if  $N_{P_a \subset X}$ is a trivial bundle, then $f$ cannot be ramified at points of $g^{-1}(a)$ by Proposition \ref{p.kollar} (ii), a contradiction to the choice $a \in \sR$.  Thus $m_{n-1} <0$ and $\oplus_{1 \leq i \leq n-2} \sO(m_i)_x$ should correspond to $T_x(B_o)/T_x(P_a)$. This verifies (i)-(iv). (iv) is clear from Lemma \ref{l.positive}.
      \end{proof}

\begin{proposition}\label{p.line}
In the setting of Proposition \ref{p.normal}, assume furthermore that $X$ has Picard number 1.
Then $\sR, R, B$ and ${\rm Mult}(f^*\sW)$ are   nonempty. A general member of any component of ${\rm Mult}(f^*\sW)$ intersect $R$ transversally at  points of  $f^{-1}(B_o).$    \end{proposition}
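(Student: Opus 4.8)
The plan is to derive the nonemptiness statements from the fact that an \'etale web of smooth curves on a manifold of Picard number $1$ is never univalent, together with the deformation-theoretic analysis of the normal bundle already set up in Proposition~\ref{p.normal}. First I would observe that, since $X$ has Picard number $1$, Lemma~\ref{l.trivial} forces the morphism $f: {\rm Univ}_{\widehat{\sV}} \to X$ attached to any irreducible component $\sV$ of $\sW$ to be non-birational, hence generically finite of degree $\geq 2$. Because $X$ is a Fano manifold (Proposition~\ref{p.example}) it is simply connected, so $f$ cannot be \'etale; it must be genuinely ramified in codimension $1$. This gives an irreducible hypersurface $H \subset \widehat{\sV}$ with $f$ ramified at a general point of $g^{-1}(H)$, and one checks that $f(g^{-1}(H))$ is a hypersurface of $X$ (it cannot be lower-dimensional, since through a general point of it pass finitely many members of the web and the ramification locus would otherwise be contracted, contradicting that members are free away from $\sR$). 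Thus $\sR \neq \emptyset$, and consequently $R = g^{-1}(\sR) \neq \emptyset$ and $B = f(R) \neq \emptyset$; by construction $B$ is a hypersurface in $X$.

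The nonemptiness of ${\rm Mult}(f^*\sW)$ is exactly the content of Proposition~\ref{l.nbirational} (or Proposition~\ref{p.NBir}): since $X$ is simply connected of Picard number $1$ and $f: {\rm Univ}_{\widehat{\sV}} \to X$ is generically finite but not birational, ${\rm Mult}(f^*\sW) \neq \emptyset$. So this part requires only a citation. The substance of the proposition is therefore the last sentence: a general member of any component $\sM$ of ${\rm Mult}(f^*\sW)$ meets $R$ transversally at points lying over $B_o$.

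For that, I would argue as follows. A component $\sM$ of ${\rm Mult}(f^*\sW)$ is an \'etale web of smooth curves on ${\rm Univ}_{\widehat{\sV}}$ by Lemma~\ref{l.coveretale}, so its general member is a free curve; in particular it has positive intersection number with the ramification divisor $R$ of $f$, because $R$ is a nonzero effective divisor and members of $\sM$ cover ${\rm Univ}_{\widehat{\sV}}$ — here one uses that ${\rm Univ}_{\widehat{\sV}}$, being birational to a $\BP^1$-bundle over $\widehat{\sV}$, has the property that any effective divisor meets a general covering curve (more carefully: members of $\sM$ that are \emph{not} contracted and cover the whole space must meet $R$, since $R$ dominates $B$ which is a hypersurface in $X$ and members of $\sM$ map to curves through general points of $B$). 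Now apply Lemma~\ref{l.positive} to the irreducible \'etale web $\sM$ and the hypersurface $R$ (after passing to an irreducible component of $R$ with positive intersection): there is a Zariski open $R^{\sM} \subset R$ and $\sM^{R} \subset \sM^{\rm etale}$ such that for $a \in \sM^R$ the member $P_a$ meets $R$ transversally and $P_a \cap R \subset R^{\sM}$. Finally, shrink so that $R^{\sM}$ lies over $B_o$: using Lemma~\ref{l.choose} applied to $f: {\rm Univ}_{\widehat{\sV}} \to X$ (or directly to $\alpha: R \to B$), a general member of $\sM$ meeting the preimage of $B_o$ does so transversally within $R$ and at points of $f^{-1}(B_o)$, which is the assertion.

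The main obstacle I anticipate is the very first point: showing that the ramification of $f$ actually occurs along a hypersurface of $\widehat{\sV}$ whose image in $X$ is again a hypersurface — i.e.\ that $\sR$, and hence $R$ and $B$, are nonempty of the expected dimension. The purity of the branch locus (Zariski--Nagata) handles the existence of a codimension-$1$ ramification divisor once one knows $f$ is non-birational and $X$ is smooth; the delicate part is excluding the possibility that every such ramification divisor is contracted by $f$ to something of lower dimension, which is where one must invoke that a non-contracted covering family of lines through a general point of the image would contradict the $\sR$-defining ramification condition, or equivalently that members of the web over $X \setminus B$ are free (Proposition~\ref{p.kollar}(ii)) and the ramification of $f$ is detected precisely by non-triviality of the normal bundle as in Proposition~\ref{p.normal}. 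Everything downstream is a bookkeeping application of Lemma~\ref{l.positive}, Lemma~\ref{l.choose} and Lemma~\ref{l.coveretale}.
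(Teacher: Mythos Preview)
Your nonemptiness arguments are essentially the paper's: Picard number~1 plus Lemma~\ref{l.trivial} gives $f$ non-birational, Fano hence simply connected forces genuine ramification, and Proposition~\ref{p.NBir} gives ${\rm Mult}(f^*\sW)\neq\emptyset$. The divergence, and the gap, is in the second sentence of the proposition.

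Your argument that a general member $E$ of a component $\sM\subset{\rm Mult}(f^*\sW)$ meets $R$ is incomplete. The claim ``$R$ is a nonzero effective divisor and members of $\sM$ cover ${\rm Univ}_{\widehat{\sV}}$, hence $E\cdot R>0$'' is false in general (think of rulings on $\BP^1\times\BP^1$), and your parenthetical fix only yields $E\cap f^{-1}(B)\neq\emptyset$, not $E\cap R\neq\emptyset$; the fibre $f^{-1}(B)$ can have components outside $R$. Moreover, your intended application of Lemma~\ref{l.coveretale} and Lemma~\ref{l.positive} upstairs on ${\rm Univ}_{\widehat{\sV}}$ is illegitimate as stated: both lemmas are formulated for projective \emph{manifolds}, and ${\rm Univ}_{\widehat{\sV}}$ is a $\BP^1$-bundle over the normalization $\widehat{\sV}$, which is not known to be smooth.

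The paper bypasses both problems by using the defining property of ${\rm Mult}$ directly. Since $E\in{\rm Mult}(f^*\sW)$, the restriction $f|_E:E\to f(E)$ is a branched cover of a line of degree $\geq 2$, so by Riemann--Hurwitz it has ramification points; at such a point $f$ itself is ramified, so $E$ meets the ramification locus of $f$. As $E$ ranges over $\sM$, the images $f(E)$ are general lines on $X$ and the branch points on them sweep out a hypersurface in $X$; this forces the relevant ramification component to lie in $R$ (and incidentally proves $\sR\neq\emptyset$, so your ``main obstacle'' dissolves). Transversality is then obtained \emph{downstairs}: Lemma~\ref{l.positive} applied on the smooth $X$ to the web $\sW$ and the hypersurface $B$ gives $f(E)$ transverse to $B$ at points of $B_o$, which lifts to $E$ transverse to $R$ at points of $f^{-1}(B_o)$ via the \'etaleness of $\alpha|_{\alpha^{-1}(B_o)}$ established in Proposition~\ref{p.normal}.
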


\begin{proof}
Since $X$ has Picard number 1 and is covered by lines, it must be Fano, which implies that $X$ is simply connected. Thus the sets $\sR, R$ and $B$ in Proposition \ref{p.normal} are not empty and Proposition \ref{p.NBir} shows that ${\rm Mult}(f^* \sW)$ is not empty. For a general member $E$ of ${\rm Mult}(f^*\sW)$, the morphism $f|_E: E \to f(E)$ is a branched cover of a line.
Thus $E$ must intersect the ramification locus of $f$. Since $\sW$ is an \'etale web of lines, the images of ramification locus of $f|_E$ must cover a hypersurface in $X$ as $E$ varies in ${\rm Mult}(f^*\sW)$.  Thus $E$ has nonempty intersection with $R$. The intersection is transversal, because $f(E)$ should intersect $B$ transversally by  Lemma \ref{l.positive}. \end{proof}

\begin{theorem}\label{t.line}
Let $X\subset \BP^N$ be a projective submanifold of Picard number 1 and let $\sW$ be an \'etale web of lines on $X$.
Then $\sW$ is not pleated.   \end{theorem}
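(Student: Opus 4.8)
The plan is to argue by contradiction: assume $\sW$ has a pleated component $\sV$, and extract enough of the geometry of lines to contradict the hypothesis that $X$ has Picard number $1$. Take a general member $C$ of $\sV$; since $\sW$ is an \'etale web of smooth rational curves, $C$ is a free line with $N:=N_{C\subset X}\cong\sO_C^{\,n-1}$, where $n=\dim X$, so the evaluation $H^0(C,N)\to N_z$ is an isomorphism for every $z\in C$. This is the \emph{deformation-theoretic property of lines} that fails for higher degree: it identifies $N$ canonically with $H^0(C,N)\otimes\sO_C$ and hence produces a canonical ``parallel transport'' $\tau_{z,z'}\colon\BP(N_z)\xrightarrow{\ \sim\ }\BP(N_{z'})$. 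By Proposition \ref{l.pleat}(2) I may choose general points $x\neq y$ of $C$, lying off all bad loci (in particular off the branch locus $B$ of Proposition \ref{p.line} and off the locus where $\mu_{\widehat\sW}$ is ramified), such that $C$ is pleated at $(x;y)$.

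Next I would translate pleatedness into a statement about sections of $N$. First reinterpret the deformation germ: for a web-member $L\neq C$ through a general $z\in C$, the germ ${\rm Def}(C;L,z)\subset\sV$ at $[C]$ is the germ of the $1$-dimensional family of web-members meeting $L$, and under $T_{[C]}\sV\cong H^0(C,N)$ its tangent line is the unique $[s]$ with $s(z)$ spanning the image $\overline{T_zL}\in\BP(N_z)$ of $T_zL$; uniqueness uses that $N$ is trivial, and that a line is recovered from one point together with its tangent direction there. Hence pleatedness at $(x;y)$ says: $\tau_{x,y}$ carries the finite set $\{\overline{T_xL}\}$ of web-tangent directions at $x$ other than $C$ bijectively onto the corresponding finite set $\{\overline{T_yL'}\}$ at $y$. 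Running this for all members of $\sV$ and combining with Proposition \ref{l.pleat}(1) — which upgrades ``pleated at $(z;y)$ for infinitely many $z$ with $y$ fixed'' to ``pleated at infinitely many pairs'' — I would bootstrap to: for a general $C\in\sV$ the transported configuration $\tau_{z_0,z}\bigl(\{\overline{T_zL}\}\bigr)\subset\BP^{\,n-2}$ is independent of $z\in C$; equivalently, each ``branch'' $z\mapsto L(z)$ of web-members through points of $C$ is governed by a fixed section $s\in H^0(C,N)\setminus\{0\}$ with $\BP(s(z))=\overline{T_zL(z)}$ for all general $z$.

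Finally I would produce the geometric contradiction. Fix such a branch and set $\Sigma:=\overline{\bigcup_{z\in C}L(z)}\subset X$, a surface ruled by the lines $L(z)$ and containing $C$. The line subbundle $N_{C\subset\Sigma}\subseteq N$ is pointwise spanned by the nowhere-vanishing section $s$, so $N_{C\subset\Sigma}\cong\sO_C$; hence $C$ moves in a base-point-free pencil of members of $\sV$ lying on $\Sigma$, whose general members are pairwise disjoint, and $\Sigma$ is a smooth quadric surface in $\BP^N$ ruled by lines in both directions. Thus through a general member of $\sV$ there is a pencil of pairwise disjoint members sweeping out a quadric surface inside $X$, every ruling of which consists of web-members; letting $C$ and the branch vary, these quadric surfaces cover $X$. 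I would then argue that this abundance of lines is incompatible with $\rho(X)=1$: it forces the lines of the web through a general point of $X$ to move in a positive-dimensional family, contradicting the definition of a web; equivalently, it is ruled out by combining Proposition \ref{p.normal} and Proposition \ref{p.line} (which force $B\neq\emptyset$ and pin down the normal bundles of the non-free lines meeting $B$), together with Lemma \ref{l.trivial} and Proposition \ref{p.NBir}.

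\textbf{Main obstacle.} I expect two delicate points. The first is the bootstrap from a single, or at most a $1$-dimensional family of, pleating pair(s) to ``the web-tangent configuration is constant along $C$'': this requires exploiting the algebraicity of the pleating correspondence $Z\subset C\times C$ and ruling out the possibility that $Z$ is a non-diagonal curve arising from a genuine multivalued self-symmetry of that configuration (here one must use the full germ-level equality of the deformations, not merely the equality of their tangents). The second, and I think the harder, is converting ``$X$ is covered by these structured quadric surfaces'' into a genuine contradiction with $\rho(X)=1$; the cleanest route is likely to run this through the ramification behaviour of $\mu_{\widehat\sV}$ described in Propositions \ref{p.normal} and \ref{p.line}, showing that a pleated component forces the general member of $\sV$ into a position incompatible with that behaviour.
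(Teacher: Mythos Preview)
Your plan diverges from the paper's argument and, as you yourself flag in the ``Main obstacle'' section, the two load-bearing steps are not secured.

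\textbf{The bootstrap fails.} Pleatedness of the component $\sV$ gives, for general $C$ and general $x\in C$, \emph{some} $y$ with $C$ pleated at $(x;y)$. This is a correspondence $Z\subset C\times C$ dominating the first factor; nothing prevents $Z$ from being, say, the graph of a nontrivial involution of the configuration of web-tangents in $\BP^{\,n-2}$. Proposition~\ref{l.pleat}(1) upgrades ``many $z$ with fixed $y$'' to ``many pairs inside the same set'', but it does not produce $Z=C\times C$, nor even a Zariski-dense set of pairs in $C\times C$. So you cannot conclude that the transported configuration is constant along $C$, and without that your branch $z\mapsto L(z)$ governed by a single constant section $s$ does not exist.

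\textbf{The endgame is not a contradiction.} Even granting the bootstrap, you only get a ruled surface $\Sigma\supset C$ with $(C\cdot C)_{\Sigma}=0$; smoothness of $\Sigma$ and the quadric conclusion are not automatic. And covering $X$ by quadric surfaces does not force infinitely many web-lines through a general point: each quadric contributes two lines through a point on it, and there may be only finitely many such quadrics through a general point of $X$. The references to Propositions~\ref{p.normal}, \ref{p.line}, Lemma~\ref{l.trivial}, Proposition~\ref{p.NBir} do not supply the missing implication.

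\textbf{What the paper actually does.} The paper uses pleatedness \emph{once}, at a carefully chosen pair, and the contradiction comes from a \emph{non-free} line rather than from free ones. Fix the component $\sV$ and its universal family $f\colon A\to X$, $g\colon A\to\widehat\sV$. Pick a general member $E$ of ${\rm Mult}(f^*\sW)$, which by Proposition~\ref{p.line} meets the ramification divisor $R$ transversally over $B_o$. Let $C=P_{g(z)}$ for a general $z\in E$. Pleatedness of $C$ at $(f(z);f(y))$ produces another lift $F\ni y$ with ${\rm Def}(C;f(E),f(z))={\rm Def}(C;f(F),f(y))$, hence $g(E)=g(F)$ as curves in $\widehat\sV$. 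One checks $f(E)\cap f(F)=\emptyset$ (otherwise the plane $\langle f(E),f(F)\rangle$ would lie in $X$ and $C$ could not have trivial normal bundle). Now degenerate: the curve $g(E)\subset\widehat\sV$ meets $\sR$ transversally, giving a family $h\colon\BP^1\times\Delta\to X$ of lines with $h_0(\BP^1)\subset B$ a non-free line. The infinitesimal deformation $t^{-k}\partial_t h$ restricts to a nonzero section of $N_{h_0(\BP^1)\subset X}$ which, by Proposition~\ref{p.normal}(iv) and the transversality of $f(E),f(F)$ to $B_o$, must vanish at the two distinct points $h_0(\BP^1)\cap f(E)$ and $h_0(\BP^1)\cap f(F)$. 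But the normal bundle of a line in $\BP^N$ has no nonzero section with two zeros. That is the contradiction.

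The mechanism you are missing is this passage to a non-free limit line and the two-zero constraint on its normal bundle; the parallel-transport viewpoint on free lines, while correct as far as it goes, does not by itself reach a contradiction.
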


 \begin{proof}
 Let us assume that  $\sW$ has a pleated component $\sV$ and derive a contradiction. We will use the notation of Proposition \ref{p.normal} and Proposition \ref{p.line}.

  For a general member $E$ of ${\rm Mult}(f^*\sW),$ intersecting $R$ transversally at points of $f^{-1}(B_o)$, pick a point $z \in E$ with $f(z) \in O$ and let $C \subset X$ be the member of $\sV$ corresponding to $g(z)$. Since $C$ is pleated, there exists $y \in g^{-1}(g(z))$ such that $C$ is pleated at $(f(z); f(y))$. Moreover, we can assume that $f(y) \in O$ by Proposition \ref{l.pleat}. Thus we have a member $F \subset {\rm Univ}_{\widehat{\sV}}$ of $f^*\sW$ through $y$ such that $${\rm Def}(C; f(E), f(z)) = {\rm Def}(C; f(F), f(y)). $$ This implies that
  $g(E) = g(F)$ in $\widehat{\sV}$. This curve $g(E)$ intersects the hypersurface $\sR \subset \widehat{\sV}$ transversally at points in $g(f^{-1}(B_o) \cap R)$.

 Since $f(z)$ and $f(y)$ are two distinct points on the line $C$, the line $f(E)$ through $f(z)$ and the line $f(F)$ through $f(y)$ must be different.  Suppose that $f(E) \cap f(F) \neq \emptyset$.  Then the family of lines on $X$ parametrized by $g(E)$ lie on the plane
 $\langle f(E), f(F) \rangle \subset \BP^N$ spanned by $f(E)$ and $f(F)$. Thus  $\langle f(E), f(F) \rangle \subset X$. But then the line
 $C$ on this plane cannot have trivial normal bundle in $X$, a contradiction. Thus we have $f(E) \cap f(F) = \emptyset.$

 Let $\iota: \Delta = \{ t \in \C, |t| <1 \} \to g(E) \subset \widehat{\sV}$ be a local uniformization of the curve $g(E)$ at the point $\iota(0) \in g(E) \cap \sR$.
 The pull-back of the $\BP^1$-bundle $g:  {\rm Univ}_{\widehat{\sV}} \to \widehat{\sV}$ by $\iota$ is biholomorphic to  a trivial bundle
$p: \BP^1 \times \Delta \to \Delta$, equipped with a natural holomorphic map $j: \BP^1 \times \Delta \to {\rm Univ}_{\widehat{\sV}}$:
$$ \begin{array}{ccccc} \BP^1 \times \Delta & \stackrel{j}{\longrightarrow} & {\rm Univ}_{\widehat{\sV}} & \stackrel{f}{\longrightarrow} & X \\ p \downarrow & & \downarrow g & & \\ \Delta & \stackrel{\iota}{\longrightarrow} & \widehat{V}. & & \end{array} $$

 Write  $h: \BP^1 \times \Delta \to X$ for the composition $f \circ j$ such that for each $t \in \Delta$, the morphism $h_t: \BP^1 = \BP^1 \times \{ t \} \to X$ is an embedding as a line in $\BP^N$ and the line $h_0(\BP^1)  \subset X$ is contained in $B$.
Writing $u = h_0(\BP^1) \cap f(E)$ and $v = h_0(\BP^1) \cap f(F)$, we have $u \neq v$ from $f(E) \cap f(F) = \emptyset$.
Since the lines $f(E)$ and
$f(F)$ intersect $B_o$ transversally by the requirement $f(z), f(y) \in O$, we have $T_u(B_o) \cap T_u(f(E)) =0$ and $T_v(B_o) \cap T_v(f(F)) =0$.

Note that ${\rm d} h: T(\BP^1 \times \Delta) \to h^* T(X)$ sends the vertical tangent $T^p$ of the projection $p: \BP^1 \times \Delta \to \Delta$  into a line subbundle of  $h^*T(X),$ because $h_t$ is an embedding of $\BP^1$ to a line in $X$ for each $t$. The quotient bundle $\sN := h^*T(X)/T^p$ has the property that its restriction to the fiber $h^{-1}(t)$ is isomorphic to the normal bundle
$N_{h_t(\BP^1)\subset X}$.

The infinitesimal deformation $\frac{\partial}{\partial t} h$ defines a section $\sigma$ of the vector bundle $\sN$ on $\BP^1 \times \Delta$.
Let $k$ be the vanishing order of $\sigma$ along $t=0$, i.e. the nonnegative integer such that $t^{-k}\sigma$ is a holomorphic section of $\sN$ which does not vanish identically on $p^{-1}(0)$. Since the complex  analytic surface $h(\BP^1 \times \Delta)$ contains the germs of the lines $f(E)$ and
$f(F)$, the restriction
 $t^{-k}\sigma|_{h^{-1}(f(E))}$ (resp. $t^{-k}\sigma|_{h^{-1}(f(F))}$) takes values in $T(f(E))$ (resp. $T(f(F))$) modulo $T^{p}.$  On the other hand, Proposition \ref{p.normal} (iv) says that $t^{-k}\sigma|_{t=0}$ must take values in $T(B_o)$ modulo $T^{p}$. This
 implies that the values of $t^{-k}\sigma$ at $h_0^{-1}(u)$ and at $h_o^{-1}(v)$ must belong to $$T_u(B_o) \cap T_u(f(E)) = 0 \mbox{ and } T_v(B_o) \cap T_v(f(F)) =0.$$    Thus it gives a nonzero section of the normal bundle $$N_{h_0(\BP^1) \subset X} \subset N_{h_0(\BP^1) \subset \BP^N}$$ of the line
 $h_0(\BP^1) \subset X \subset \BP^N$, vanishing at the two distinct points $u$ and $v$. But the normal bundle of a line in $\BP^N$ cannot have a nonzero section vanishing at two
 distinct points, a contradiction. This proves Theorem \ref{t.line}. \end{proof}

\begin{proposition}\label{p.ell}
 In the setting of Theorem \ref{t.rational}, assume that   $\ell' =1$, then the projection $\Gamma \to X$ is birational, i.e., the generically finite correspondence $\Gamma$ defines a rational map $X \dasharrow X'.$ \end{proposition}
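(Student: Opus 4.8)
The plan is to deduce this from Theorem~\ref{t.line} via the contrapositive of Proposition~\ref{p.pleat2}.

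First I would check that the hypotheses of Proposition~\ref{p.pleat2} hold. By Proposition~\ref{p.example}, $X$ is a Fano manifold of Picard number~1, hence simply connected, and $\sW$ is an \'etale web of smooth rational curves on $X$. Since $\ell'=1$, every irreducible component of $\sW'$ is a component of the full web of lines on $X'$, whose general members are smooth with trivial normal bundle by (the proof of) Proposition~\ref{p.example}; so by Definition~\ref{d.etale} the web $\sW'$ is an \'etale web of lines on $X'\subset\BP^N$ in the sense of Definition~\ref{d.line}. Finally, the correspondence $\Gamma$ produced by Theorem~\ref{t.rational} is irreducible and respects $[\sW;\sW']$: the graph of $\varphi$ is Zariski-dense in $\Gamma$, the two projections restrict to it as biholomorphisms related by $\varphi$, and $\varphi$ is an equivalence of the two web-structures, so $({\rm pr}_X|_\Gamma)^*\sW$ and $({\rm pr}_{X'}|_\Gamma)^*\sW'$ agree on a dense open subset of $\Gamma$ and therefore coincide.

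Now Theorem~\ref{t.line} says that an \'etale web of lines on a submanifold of $\BP^N$ of Picard number~1 is not pleated, so $\sW'$ is not pleated. If ${\rm pr}_X\colon\Gamma\to X$ were not birational, Proposition~\ref{p.pleat2} would force $\sW'$ to be pleated, a contradiction; hence ${\rm pr}_X$ is birational. Composing the birational inverse of ${\rm pr}_X$ with ${\rm pr}_{X'}\colon\Gamma\to X'$ then yields a rational map $X\dasharrow X'$, which extends $\varphi$ since $\Gamma$ contains the graph of $\varphi$. The argument is essentially formal once the two cited results are available; the only point deserving a line of justification is the verification that $\Gamma$ respects $[\sW;\sW']$, for which the density of the graph of $\varphi$ in the irreducible $\Gamma$ suffices. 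I do not expect a serious obstacle here: all the substantive geometry — the normal-bundle estimates for lines and the pleating analysis — has already been carried out in the proof of Theorem~\ref{t.line}.
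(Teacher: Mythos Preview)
Your argument is correct and follows essentially the same route as the paper: invoke Proposition~\ref{p.pleat2} (the paper's citation of ``Proposition~\ref{p.pleat}'' appears to be a typo for \ref{p.pleat2}) together with Theorem~\ref{t.line} to rule out non-birationality of ${\rm pr}_X$. Your added verification that $\Gamma$ respects $[\sW;\sW']$ via Zariski-density of the graph of $\varphi$ is a useful detail the paper leaves implicit.
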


\begin{proof}
 Recall from Proposition \ref{p.example} that $\sW$ and $\sW'$ are \'etale webs of smooth curves.
If $\Gamma$ is not birational to $X$, then Proposition \ref{p.pleat} implies that the web $\sW'$ is pleated.
This is a contradiction to Theorem \ref{t.line}. \end{proof}

\begin{proposition}\label{p.easy}
In the setting of Theorem \ref{t.rational}, assume that $\Gamma$ is birational over both $X$ and $X'$, i.e., it defines a birational map $\Phi: X \dasharrow X'$. Then $\Phi$ gives a biregular morphism $X \cong X'$.
\end{proposition}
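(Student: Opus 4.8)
The plan is to show that both projections $\mathrm{pr}_X\colon \Gamma\to X$ and $\mathrm{pr}_{X'}\colon \Gamma\to X'$ are isomorphisms. Since $\Gamma$ is birational over each factor, $\Phi=\mathrm{pr}_{X'}\circ(\mathrm{pr}_X)^{-1}\colon X\dasharrow X'$ and $\Phi^{-1}$ are honest birational maps, and by hypothesis $\Phi$ (resp.\ $\Phi^{-1}$) sends $\sW$ into $\sW'$ (resp.\ $\sW'$ into $\sW$); in particular a general member of $\sW$ is carried birationally onto a general member of $\sW'$. By Proposition~\ref{p.example}, $X$ and $X'$ are Fano of Picard number $1$ and $\sW,\sW'$ are \'etale webs of smooth rational curves, so a regular member $C$ of $\sW$ has trivial normal bundle and hence $-K_X\cdot C=2$, and likewise $-K_{X'}\cdot C'=2$ for members $C'$ of $\sW'$. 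Replacing $\Gamma$ by a resolution $\widehat\Gamma$ of its normalization, we obtain birational morphisms $p\colon\widehat\Gamma\to X$ and $q\colon\widehat\Gamma\to X'$ of smooth projective varieties, and it is enough to show that neither $p$ nor $q$ has an exceptional divisor: then by purity of the exceptional locus of a birational morphism onto a smooth variety both are isomorphisms, $\widehat\Gamma\cong X\cong X'$, and $\Phi$ is biregular.

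First I would show that $p$ and $q$ have the same exceptional prime divisors. Writing $K_{\widehat\Gamma}=p^{*}K_X+\sum_i a_iE_i=q^{*}K_{X'}+\sum_j b_jF_j$ with $E_i$ (resp.\ $F_j$) the $p$-exceptional (resp.\ $q$-exceptional) prime divisors and $a_i,b_j\ge 1$, push the identity $q^{*}(-K_{X'})=p^{*}(-K_X)-\sum_i a_iE_i+\sum_j b_jF_j$ forward by $p$. A general member $C$ of $\sW$ is not contained in the fundamental locus of $p$, and $\Phi$ carries it birationally onto a member $C'$ of $\sW'$; comparing degrees on $C$ and using $-K_{X'}\cdot C'=-K_X\cdot C=2$ gives $p_{*}\!\bigl(q^{*}(-K_{X'})\bigr)=-K_X$ in $\operatorname{Cl}(X)\cong\Z$. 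Since $p_{*}E_i=0$, the pushed-forward identity reduces to $\sum_j b_j\,[p(F_j)]=0$, the sum extending over those $F_j$ that are not $p$-exceptional; each such summand is an effective nonzero divisor, so there are none, i.e.\ every $q$-exceptional prime divisor is $p$-exceptional, and symmetrically the two sets coincide.

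It remains to exclude a prime divisor $D\subset\widehat\Gamma$ contracted by both $p$ and $q$, to $Z:=p(D)$ and $Z':=q(D)$ of codimension $\ge 2$ in $X$ and $X'$. Here I would bring in the \'etale web: by Lemma~\ref{l.coveretale}, $p^{*}\sW$ is again an \'etale web of smooth rational curves on $\widehat\Gamma$ whose components are covering families and whose general members map isomorphically onto general members of $\sW$ under $p$ and of $\sW'$ under $q$. Since the universal family of each component surjects onto $\widehat\Gamma$ it meets $D$, and tracking the members that meet $D$ down to $X$ forces a positive-dimensional family of members of $\sW$ through the fixed subvariety $Z$, hence a positive-dimensional family through a general point of $Z$; but for an \'etale web the regular members through a point are isolated (Definition~\ref{d.etale}(ii)), so all but finitely many of these members are non-regular, and running the same analysis for $\sW'$ and $Z'$ and matching up the two families via $\Phi$ produces a contradiction. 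I expect this last step to be the main obstacle: ruling out such a ``flip-like'' divisor is not settled by dimension counts alone, and one must exploit the finer structure of \'etale webs of smooth rational curves — the trivial normal bundles of regular members, the $\BP^1$-bundle structure of Proposition~\ref{p.kollar}, and the resulting deformation theory — to see that $Z$ and $Z'$ cannot carry such a family of web curves.
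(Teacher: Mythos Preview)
Your Step~1 is correct and is a pleasant alternative to the paper's argument, but Step~2 is a detour that you do not need to take, and the gap you identify there is real: nothing you have written rules out a divisor $D\subset\widehat\Gamma$ contracted by both $p$ and $q$, and the sketch you give (forcing a positive-dimensional family of web members through $Z$) does not work as stated, since a general member of $p^*\sW$ maps isomorphically to a general member of $\sW$, hence misses $Z$ and therefore misses $D$ entirely.

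The point is that after Step~1 you already know that neither $\Phi$ nor $\Phi^{-1}$ contracts a hypersurface, so $\Phi$ is an isomorphism in codimension~$1$: there are Zariski open subsets $X_o\subset X$ and $X'_o\subset X'$ with complements of codimension $\ge 2$ on which $\Phi$ restricts to a biregular isomorphism. The paper then finishes in one line by Hartogs extension: the induced isomorphisms $H^0(X_o,K_{X_o}^{-k})\cong H^0(X'_o,K_{X'_o}^{-k})$ extend to $H^0(X,K_X^{-k})\cong H^0(X',K_{X'}^{-k})$ for all $k>0$, and since $X$ and $X'$ are Fano (Proposition~\ref{p.example}) this identifies their anticanonical rings and hence gives a biregular isomorphism $X\cong X'$. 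There is no need to prove that $\widehat\Gamma\to X$ is itself an isomorphism.

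For comparison, the paper's version of Step~1 is more direct than your canonical-divisor computation: if a hypersurface $H\subset X$ were contracted by $\Phi$, then by Picard number~$1$ every general member of $\sW$ meets $H$, hence its $\Phi$-image, a general member of $\sW'$, meets the codimension~$\ge 2$ set $\Phi(H)$, contradicting the fact that $\sW'$ is an \'etale web of smooth curves. Your argument via $-K_X\cdot C=-K_{X'}\cdot C'=2$ and pushforward in $\operatorname{Cl}(X)\cong\Z$ reaches the same conclusion and is a nice variant, but the web-theoretic argument is shorter and avoids passing to a resolution.
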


\begin{proof} The proof is essentially the same as that of Proposition 4.4 of \cite{HM01}, modulo a few minor changes.
We reproduce it for the reader's convenience.

Firstly, we claim that there is no hypersurface in $X$ (resp. $X'$)  contracted by $\Phi$ (resp. $\Phi^{-1}$). Let us prove it
for $\Phi$ (the same argument works for $\Phi^{-1}$). Assume the contrary and let $H\subset X$ be a hypersurface
contracted by $\Phi$, i.e., the proper image $\Phi(H)$ has codimension $\geq 2$ in $X'$. Since $X$ has Picard number 1, all general members of $\sW$ intersect $H$. Since $\Phi$ sends germs of members of $\sW$ to those of $\sW'$, the proper images under $\Phi$ of general members of $\sW$ give general members of $\sW'$. It follows that  all general members of $\sW'$ intersect the variety $f(H)$ of codimension $\geq 2$ in $X'$, a contradiction to the fact that $\sW'$ is an \'etale web of smooth curves.

By the claim, we see that $\Phi$ induces a biregular morphism between two quasi-projective varieties  $X_o \subset X$ and
$X'_o \subset X'$ such that the complement $X \setminus X_o$ and $X' \setminus X'_o$ have codimension $\geq 2$.
The isomorphism between the linear systems $H^0(X_o, K_{X_o}^{-k}) \cong H^0(X'_o, K_{X'_o}^{-k})$ induced by $\Phi$ for all $k >0$ extends to an isomorphism
$H^0(X, K_X^{-k}) \cong H^0(X', K_{X'}^{- k})$ by Hartogs extension. Since $X$ and $X'$ are Fano by Proposition \ref{p.example},  this isomorphism  gives a biregular morphism between $X$ and $X'$. \end{proof}

Now we are ready to prove Theorem \ref{t.ultim} and Theorem \ref{t.application}.

\begin{proof}[Proof of Theorem \ref{t.ultim}]
 Recall from Proposition \ref{p.example} that lines covering $X$ and $X'$ define  \'etale webs of lines.
Applying Theorem \ref{t.rational}, we have a generically finite correspondence $\Gamma \subset X \times X'$ extending $\varphi$. By Proposition \ref{p.ell}, we know that $\Gamma$ gives a rational map $X \dasharrow X'$. We can apply Proposition \ref{p.ell} with $X$ and $X'$ switched to  see that $\Gamma$ gives a birational map $\Phi: X \dasharrow X'$. Then Proposition \ref{p.easy} implies that $\Phi$ gives a biregular morphism. \end{proof}

\begin{proof}[Proof of Theorem \ref{t.application}] Let $\Phi: X \to X'$ be a surjective morphism.
 Recall from Proposition \ref{p.example} that lines covering $X$ (resp. smooth rational curves of degree $\ell$ covering $X'$) define an \'etale web $\sW$ (resp. $\sW'$) of smooth curves.   By Lemma \ref{l.coveretale}, the pull-back
   $\Phi^* \sW'$ is an \'etale web of smooth curves on $X$. The second assertion of Proposition \ref{p.example} implies that  $\Phi^*\sW'$ is a  subweb of $\sW$. Applying Proposition \ref{p.ell} to the webs $\sW'$ and $\Phi^* \sW'$, with $X$ and $X'$ switched,  we see that the graph  ${\rm Graph}(\Phi) \subset X \times X'$ must be birational to $X'$. Thus $\Phi$ is birational, which must be biregular by Proposition \ref{p.easy}.
\end{proof}

\bigskip
\noindent {\bf Acknowledgment} $\;$  I would like to thank the referee for pointing out an error in the proof of Theorem \ref{t.main} in the first version of the paper.

\end{document}